\pdfoutput=1
\documentclass[a4paper,11pt]{amsart}
\usepackage[hmarginratio={1:1},vmarginratio={1:1},lmargin=60.0pt,tmargin=60.0pt]{geometry}

\allowdisplaybreaks

\usepackage[numbers]{natbib}
\usepackage[utf8]{inputenc}
\usepackage{latexsym,exscale,mathtools}
\usepackage{amssymb,amsmath,amsthm,amsfonts,enumitem}
\usepackage[table]{xcolor}
\usepackage{booktabs,array,tabularx,graphicx}
\usepackage{etoolbox,needspace}
\usepackage{placeins}

\setlist[enumerate]{itemsep=0.15cm,label=\emph{\upshape(\alph*)}}
\setlist[enumerate,2]{itemsep=0.15cm,label=\emph{\upshape(\roman*)}}

\usepackage{tikz}
\usetikzlibrary{arrows.meta}
\definecolor{orchid}{RGB}{143,40,194}

\definecolor{lava}{RGB}{207,16,32}
\definecolor{mydarkblue}{RGB}{10,10,170}
\definecolor{treegray}{gray}{0.88}
\definecolor{treegraylight}{gray}{0.96}
\IfFormatAtLeastTF{2026-06-01}{}{\usepackage{aliascnt}}

\definecolor{tomato}{RGB}{255,99,71}

\usepackage{todonotes}

\let\emph\relax
\DeclareTextFontCommand{\emph}{\bfseries\em}
\tikzset{
anchorbase/.style={baseline={([yshift=#1]current bounding box.center)}},
anchorbase/.default={-0.5ex},
every picture/.append style={line cap=round,line join=round,anchorbase},
a/.style={draw=black,line width=1pt},
web/.style={line width=.7pt},
sink/.style={circle,draw=black,fill=black,inner sep=1.6pt},
source/.style={circle,draw=black,fill=white,inner sep=1.6pt},
coupon/.style={draw=black,fill=treegray!45,inner sep=2.6pt,
rounded corners=1pt,font=\scriptsize},
arr/.style={-{Stealth[length=1.7mm]},line width=.8pt}
}

\newcommand{\Z}{\mathbb Z}
\newcommand{\F}{\mathbb K}
\newcommand{\one}{\mathbf1}
\newcommand{\Inv}{\operatorname{Inv}}

\newcommand{\inv}{\operatorname{inv}}
\newcommand{\word}[1]{#1}
\newcommand{\wordstart}{\mathtt{S}}
\newcommand{\wordmiddle}{\mathtt{M}}
\newcommand{\wordend}{\mathtt{E}}
\newcommand{\startmap}{{\Psi}_{{\wordstart}}}
\newcommand{\middlemap}{{\Psi}_{{\wordmiddle}}}
\newcommand{\finishmap}{{\Psi}_{{\wordend}}}
\newcommand{\createmap}{{\Psi}_+}
\newcommand{\removemap}{{\Psi}_-}
\newcommand{\pp}{\mathsf p}
\newcommand{\tailmap}{\boldsymbol{\rho}}

\newcommand{\repo}[1]{\nolinkurl{#1}}
\newcommand{\qpar}{q}
\newcommand{\tpar}{v}
\newcommand{\vpar}{{\tpar^{-1}}}

\def\NewTheorem#1#2{%
\IfFormatAtLeastTF{2026-06-01}{%
  \newtheorem{#1}[equation]{#2}%
}{%
  \newaliascnt{#1}{equation}%
  \newtheorem{#1}[#1]{#2}%
  \aliascntresetthe{#1}%
}%
\expandafter\def\csname #1autorefname\endcsname{#2}%
}

\numberwithin{equation}{subsection}

\theoremstyle{plain}
\NewTheorem{proposition}{Proposition}
\NewTheorem{theorem}{Theorem}
\NewTheorem{corollary}{Corollary}
\NewTheorem{lemma}{Lemma}
\theoremstyle{definition}
\NewTheorem{definition}{Definition}
\AtEndEnvironment{definition}{\null\hfill$\Diamond$}%
\NewTheorem{example}{Example}
\AtEndEnvironment{example}{\null\hfill$\Diamond$}%
\theoremstyle{remark}
\NewTheorem{remark}{Remark}
\newcommand{\remarkend}{\null\hfill$\Diamond$}
\AtEndEnvironment{remark}{\remarkend}%
\NewTheorem{question}{Question}
\AtEndEnvironment{question}{\null\hfill$\Diamond$}%

\usepackage[hypertexnames=false]{hyperref}
\usepackage{bookmark}
\hypersetup{
pdftoolbar=true,
pdfmenubar=true,
pdffitwindow=false,
pdfstartview={FitH},
pdftitle={Exponentially large mu-coefficients},
pdfauthor={Tom Goertzen, Abel Lacabanne, Daniel Tubbenhauer and Pedro Vaz},
pdfsubject={},
pdfcreator={Tom Goertzen, Abel Lacabanne, Daniel Tubbenhauer and Pedro Vaz},
pdfproducer={Tom Goertzen, Abel Lacabanne, Daniel Tubbenhauer and Pedro Vaz},
pdfkeywords={},
pdfnewwindow=true,
colorlinks=true,
linkcolor=mydarkblue,
citecolor=teal,
filecolor=magenta,
urlcolor=orchid,
linkbordercolor=lava,
citebordercolor=teal,
urlbordercolor=orchid,
linktocpage=true
}
\expandafter\def\csname subjclassname@2020\endcsname{2020 Mathematics Subject Classification}

\BeforeBeginEnvironment{definition}{\begin{samepage}}
\AfterEndEnvironment{definition}{\end{samepage}}
\BeforeBeginEnvironment{lemma}{\begin{samepage}}
\AfterEndEnvironment{lemma}{\end{samepage}}
\BeforeBeginEnvironment{proposition}{\begin{samepage}}
\AfterEndEnvironment{proposition}{\end{samepage}}
\BeforeBeginEnvironment{theorem}{\begin{samepage}}
\AfterEndEnvironment{theorem}{\end{samepage}}

\def\makeautorefname#1#2{\csdef{#1autorefname}{#2}}
\makeautorefname{section}{Section}%
\makeautorefname{subsection}{Section}%
\makeautorefname{subsubsection}{Section}%

\makeautorefname{theorem}{Theorem}%
\makeautorefname{lemma}{Lemma}%
\makeautorefname{proposition}{Proposition}%
\makeautorefname{corollary}{Corollary}%
\makeautorefname{example}{Example}%
\makeautorefname{remark}{Remark}%
\makeautorefname{definition}{Definition}%

\begin{document}
\title[Exponentially large mu-coefficients]{Exponentially large mu-coefficients}

\author{Tom Goertzen}
\address{T.G.: The University of Sydney,
School of Mathematics and Statistics F07,
NSW 2006, Australia,
\href{https://profiles.sydney.edu.au/tom.goertzen}
{profiles.sydney.edu.au/tom.goertzen},
\href{https://orcid.org/0009-0003-3399-3416}
{ORCID 0009-0003-3399-3416}}
\email{tom.goertzen@sydney.edu.au}

\author{Abel Lacabanne}
\address{A.L.: Laboratoire de Math\'ematiques Blaise Pascal (UMR 6620),
Universit\'e Clermont Auvergne,
Campus Universitaire des C\'ezeaux, 3 place Vasarely,
63178 Aubi\`ere Cedex, France,
\href{https://www.normalesup.org/~lacabanne/}
{www.normalesup.org/\textasciitilde lacabanne},
\href{https://orcid.org/0000-0001-8691-3270}
{ORCID 0000-0001-8691-3270}}
\email{abel.lacabanne@uca.fr}

\author{Daniel Tubbenhauer}
\address{D.T.: The University of Sydney,
School of Mathematics and Statistics F07,
Office Carslaw 827, NSW 2006, Australia,
\href{http://www.dtubbenhauer.com}
{www.dtubbenhauer.com},
\href{https://orcid.org/0000-0001-7265-5047}
{ORCID 0000-0001-7265-5047}}
\email{daniel.tubbenhauer@sydney.edu.au}

\author{Pedro Vaz}
\address{P.V.: Institut de Recherche en Math\'ematique et Physique,
Universit\'e catholique de Louvain,
Chemin du Cyclotron 2, 1348 Louvain-la-Neuve, Belgium,
\href{https://perso.uclouvain.be/pedro.vaz/}
{perso.uclouvain.be/pedro.vaz},
\href{https://orcid.org/0000-0001-9422-4707}
{ORCID 0000-0001-9422-4707}}
\email{pedro.vaz@uclouvain.be}
\date{}

\begin{abstract}
Using webs, we prove that Kazhdan--Lusztig mu-coefficients for symmetric groups grow at least exponentially with the rank, already for pairs labeled by a three-row rectangle.
\end{abstract}

\subjclass[2020]{Primary: 05E10, 20C08; Secondary: 17B37, 18M30.}
\keywords{Kazhdan--Lusztig polynomials, $\mu$-coefficients, three-row partitions,
$\mathfrak{sl}_3$ webs, dual canonical bases, Hecke algebras.}

\addtocontents{toc}{\protect\setcounter{tocdepth}{1}}
\maketitle
\tableofcontents

\section{Introduction}

We prove that $\mu$-coefficients for
$S_n=\operatorname{Aut}(\{1,\ldots,n\})$ grow at least
exponentially in $n$. By standard parabolic embeddings, this settles the boundedness question in all classical finite types and gives the same growth in their affine counterparts.

\subsection{The question}

Kazhdan--Lusztig (KL) polynomials \cite{KL} sit at the meeting point
of the representation theory of Hecke algebras, combinatorics and
the geometry of Schubert varieties. Among their coefficients,
the $\mu$-coefficients play a distinguished role. They are the
coefficients in the highest degree permitted by the KL degree
bound, taken to be zero when this bound is not attained.

\begin{remark}
There are plenty of applications, and we can only give a biased list.
For a fixed Coxeter system, the full collection of $\mu$-coefficients determines all KL polynomials recursively,
and the $\mu$-coefficients determine the action of simple reflections on the KL basis and 
give the edge weights in the KL cell
$W$-graphs \cite{KL}. Together with descent data, these weights
determine the cell representations, which in type $A$ give all
simple complex representations of symmetric groups.
For finite Weyl groups, they also determine composition
multiplicities of Verma modules in regular integral blocks of
category $\mathcal O$, via the KL character formula \cite{KL}.
For Weyl groups, $\mu(x,y)$ gives a local intersection cohomology
dimension of a Schubert variety in degree $\ell(y)-\ell(x)-1$
\cite{KLPositivity}.
The combinatorial invariance conjecture, due independently to
Lusztig and Dyer \cite{Dyer}, predicts, in particular, that
$\mu(x,y)$ is an invariant of the abstract Bruhat interval $[x,y]$.
(A proof with Lean verification was recently claimed in
\cite{OpenAICIC}.)
These examples are just the tip of the iceberg.
\end{remark}

The $\mu$-coefficients are nonnegative integers \cite{KLPositivity}.
For a long time, they were conjectured to take only the values
$0$ and $1$, see e.g. \cite{LS,GarsiaMcLarnan}. McLarnan and Warrington disproved this conjecture
in 2002 \cite{MW}, and subsequent computations produced
considerably larger examples \cite{Warrington}.
This left a natural question that remained open for more
than two decades: Are the $\mu$-coefficients bounded? If not, how quickly can they grow?

\begin{remark}
The question is different from asking whether the coefficients
of KL polynomials can be arbitrarily large. Indeed, Polo
showed that every polynomial with nonnegative integer
coefficients and constant term $1$ occurs as a KL polynomial
of some symmetric group \cite{Polo99}. However, the
$\mu$-coefficient must occur in the highest degree permitted
by the KL degree bound, and Polo's construction does not
ensure this.
\end{remark}

Scott--Xi constructed examples with $\mu=n+2$ in affine type
$\widetilde A_n$ for $n\ge4$ \cite{SX}, proving
unboundedness as the rank varies.
L\"ubeck's computations \cite{Luebeck} suggested exponential growth
with the rank for first cohomology dimensions; see also
\cite{ParkerStewart} for upper bounds.
For finite symmetric groups, even unboundedness remained open.
Our explicit family settles this question with a much stronger
conclusion (cf. \autoref{thm:rankbound}):

\[
\setlength{\fboxsep}{2pt}
\colorbox{blue!6}{
\begin{minipage}{0.85\textwidth}
\centering
\emph{The $\mu$-coefficients of symmetric groups grow at least
exponentially in the rank, already within left cells of
three-row rectangular shape.}
\end{minipage}}
\]

This is a dramatic jump in complexity compared with many known
results and examples.
Several natural classes of pairs in finite and affine types
satisfy $0$-$1$ bounds \cite{Green,Jones,Xi}, while the explicit
unbounded affine families of Scott--Xi and Chen \cite{SX,Chen}
exhibit only linear growth with the rank.
That exponential growth already occurs for three-row rectangles
makes the contrast even sharper: within two-row partitions, all
$\mu$-coefficients lie in $\{0,1\}$, see e.g.
\cite{LS}.

Our contribution is also methodological: we introduce a new
approach to $\mu$-coefficients using webs, in which the
construction of these striking examples becomes ``obvious''
despite their complicated description as permutations.
(Proving that the construction works remains very difficult.)

\begin{remark}
We will pass freely between
$\mu$-coefficients in symmetric groups and those occurring inside a
single left cell. More precisely, Lanini--McNamara prove that every $\mu$-coefficient
arising in a symmetric group also occurs for a pair of elements in a
common left cell of a possibly larger symmetric group
\cite[Corollary 4.3]{LM}, but
exponential growth inside left cells immediately implies exponential
growth for symmetric groups in general, cf. \autoref{thm:rankbound}.
\end{remark}

Finally, at the end of \autoref{S:Wrap}, we sketch some 
interesting consequences of our main result.

\subsection{The main result in more detail}

Our family can be written down using two words. A row word records a
standard tableau by assigning to entry $i$ the letter $A$, $B$ or $C$
according to whether $i$ lies in the first, second or third row. For example, the $(2,2,2)$ tableau below gives the row word and permutation
\[
\begin{array}{cc}
1&2\\
3&5\\
4&6
\end{array}
\quad\leftrightsquigarrow\quad
AABCBC
\quad\leftrightsquigarrow\quad
(4,6,3,5,1,2)
\quad\leftrightsquigarrow\quad
\begin{tikzpicture}[
  x=.42cm,y=.75cm,font=\scriptsize,
  baseline={([yshift=-.5ex]current bounding box.center)}
]
\foreach \i/\j in {1/4,2/6,3/3,4/5,5/1,6/2}
  \draw[line width=.55pt] (\i,0)--(\j,2);
\foreach \i in {1,...,6}{
  \node[below=2pt] at (\i,0) {$\i$};
  \node[above=2pt] at (\i,2) {$\i$};
}
\end{tikzpicture}
.
\]
We read rows from bottom to top, left to right within each row, to get
the permutation in one-line notation. Strands connect bottom position
$i$ to the top position given by the $i$th entry.

\begin{theorem}\label{thm:main}
For $r\ge0$, let $S_r,T_r$ be the standard tableaux with row words
\[
(ABC)^{4+6r},\qquad
\underbrace{ABAAAC}_{\wordstart}(\underbrace{BBBAACACC}_{\wordmiddle})^{2r}\underbrace{BBCBCC}_{\wordend}.
\]
For the associated permutations $x_{2r}<y_{2r}$ in $S_{12+18r}$, cf. \autoref{fig:permutation-pair}, one has
\[
\mu(x_{2r},y_{2r})=2^r.
\]
They lie in a common left cell of shape $(4+6r,4+6r,4+6r)$.
\end{theorem}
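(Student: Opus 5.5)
Within a left cell of shape $(m,m,m)$, $m=4+6r$, I will compute $\mu$ through the $\mathfrak{sl}_3$ web model of the cell module, not through Kazhdan--Lusztig polynomials of $S_{12+18r}$ directly. The overall argument has three steps.

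\textbf{Step 1: translate to webs.} The left cell module of shape $(m,m,m)$ is identified with the invariant space $\mathrm{Inv}(V^{\otimes 3m})$ of the quantum $\mathfrak{sl}_3$ vector representation. Under this identification the Kazhdan--Lusztig (dual canonical) basis corresponds to Lusztig's dual canonical basis of invariants. Standard tableaux correspond via their row words (Khovanov--Kuperberg growth rules) to reduced non-elliptic webs. The row word $(ABC)^{m}$ gives the minimal element $x_{2r}$, whose basis vector is a simple web of $m$ disjoint trivalent ``tripods''. Within a cell, $\mu(x,y)$ is the coefficient of $v^{-1}$ (the top permitted degree) in the expansion
\[
C_{y}=\sum_x P_{x,y}C'_x ,
\]
equivalently in the transition between the web basis and the dual canonical basis. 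Since $x_{2r}$ is extremal, I only need the coefficient of the tripod-web vector in the dual canonical vector indexed by $T_r$.

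\textbf{Step 2: localise the computation along the word.} The word for $T_r$ factors as $\wordstart\,\wordmiddle^{2r}\,\wordend$. Correspondingly, I cut the diagrams at the boundaries between these blocks. This presents the relevant coefficient as a product of transfer operators $\startmap$, $\middlemap$ and $\finishmap$ acting on a small finite-dimensional ``state space''. The state space records the few strands crossing a cut, i.e.\ a small invariant or weight space. The expected outcome is:
\begin{itemize}
\item $\middlemap$ acts on the top-degree part as a matrix whose square has a distinguished eigenvector with eigenvalue $2$ (hence $2^r$ over $2r$ middle blocks);
\item $\startmap$ and $\finishmap$ project onto that eigenvector with coefficient $1$.
\end{itemize}
Concretely, I would expand the dual canonical element indexed by $T_r$ in the web basis using that this element is bar-invariant and unitriangular. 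I then track only the terms of degree $v^{-1}$ relative to the tripod web, since lower terms cannot contribute to $\mu$. The two choices per pair of $\wordmiddle$ blocks should correspond to the two ways of resolving a square face (the square relation), which is where the factor $2$ originates.

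\textbf{Step 3: verify the cell and order claims.} These are more routine. Both $S_r$ and $T_r$ are standard tableaux of the same shape, so by Robinson--Schensted the associated permutations lie in a common left cell, provided the recording tableau is fixed appropriately. The relation $x_{2r}<y_{2r}$ and the parity of the length difference (required for $\mu\neq 0$) follow from inversion counts read off the row words.

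\textbf{Main obstacle.} The hardest part is Step 2. Dual canonical bases are not local: the canonical element for a concatenated word is not simply a gluing of local pieces. One must show that the corrections needed to restore bar-invariance never reach the critical degree. My plan is to prove an inductive statement on $r$, namely that the degree-$\le 0$ part of the dual canonical vector for $\wordstart\,\wordmiddle^{2r}$ is explicitly known up to terms of degree $\le -2$, and then check that appending $\wordmiddle^2$ preserves this shape while doubling the relevant coefficient. The base cases $r=0,1$ would be confirmed by direct computation, which also guards against sign errors in the square relation.
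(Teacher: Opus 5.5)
There is a genuine gap, and it starts in Step 1: you set out to compute the wrong quantity. In the Frenkel--Khovanov--Kirillov dictionary, the standard tensors $e_s$ model the parabolic standard basis, not the webs. The coordinates $[e_s]b_c$ of the dual canonical vector are normalized alternating parabolic KL polynomials $P^{J,+}_{x,y}=\sum_{h\in W_J}(-1)^{\ell(h)}P_{xh,y}$.

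So $\mu(x_{2r},y_{2r})$ is the coefficient of $v$ in the standard-tensor coordinate $[e_{(ABC)^N}]b_{c_{2r}}$. Even this needs a separate degree argument to see that only $h=1$ reaches the critical degree; it uses $\ell(xh)=\ell(x)+\ell(h)$, the KL degree bound and the odd length gap.

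The coefficient of the tripod web $W_{(ABC)^N}=\tau^N$ in the web expansion of $b_c$ is something else. It is an entry of the web-to-dual-canonical change of basis, a bar-symmetric Laurent polynomial that is not a KL polynomial. Minimality of $(ABC)^N$ does not rescue this, because higher webs (for instance $W_c$ itself) have nonzero coordinates at $e_{(ABC)^N}$. For $r=1$ the tripod web appears in $b_{c_2}$ with coefficient $-(v+v^{-1})$, so your recipe returns $-1$, while $[v]\,[e_{(ABC)^{10}}]b_{c_2}=2$.

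For the same reason, your remark that lower terms cannot contribute is false. A bar-invariant correction $(v+v^{-1})W_z$ introduced to kill a $v^{-1}$ coordinate necessarily changes the $v$ coefficient. For $r=1$ the raw web has $v$-coefficient $18$, and the correction removes $16$ of it. (The formula $C_y=\sum_xP_{x,y}C'_x$ is also not a correct description of KL polynomials.)

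Step 2 names the real obstacle, the non-locality of the dual canonical correction, but supplies no mechanism for it. A transfer matrix on the strands crossing a cut, built from raw web pieces, cannot even represent $b_{c_m}$: some correction terms have no strands across a given cut, for instance those with tripod components.

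The missing idea is to build the correction into the transfer operator itself.
\begin{enumerate}
\item Enlarge the interface to $V^{\otimes3}\oplus\one$ and use $T=\begin{pmatrix}\middlemap&-\createmap\\\removemap&-d\tau^3\end{pmatrix}$ with explicit correction maps.
\item Observe that $pT^m\rho$ is bar-invariant, because it is built from webs and $d=\overline d$.
\item After a gauge change on the interface, prove local degree bounds showing that every piece has entries in $\Z[v]$ and that the constant part propagates $AAA\otimes e_{\wordmiddle}$.
\item Conclude that $pT^m\rho\equiv e_{c_m}\bmod v$, so $pT^m\rho=b_{c_m}$ by the uniqueness characterization.
\end{enumerate}

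Your proposed induction on truncations of the dual canonical vector of the unbalanced word $\wordstart\wordmiddle^{2r}$ does not say how the corrections for the longer word arise from those for the shorter one. That is exactly the step that fails to be local.

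The doubling then comes from the block $K=\begin{pmatrix}0&1\\2&0\end{pmatrix}$ on the three-strand state $ABC$ and the scalar state. This is a two-step cycle through the correction channel, not two square resolutions in the raw web.

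A smaller point in Step 3: inversion counts give $\ell(x)<\ell(y)$, not $x<y$ in Bruhat order. You need that $x$ is the minimum of its left cell in weak order. That same fact is what makes $x,y$ minimal coset representatives in the parabolic argument.
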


\begin{figure}[ht]
\centering
\includegraphics[width=\linewidth]{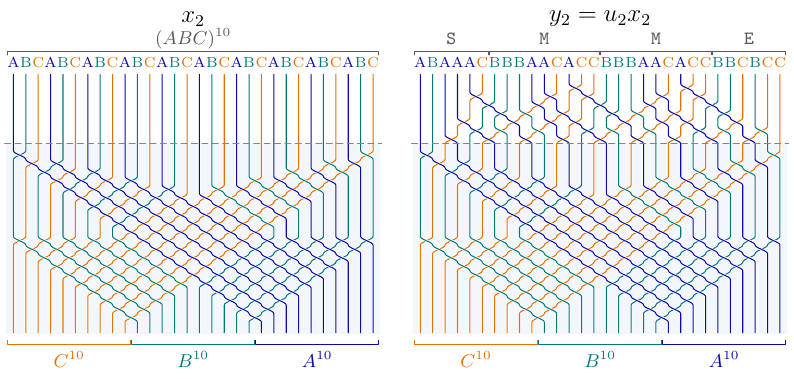}
\caption{The permutations $x_2$ and $y_2$, corresponding to $r=1$.
Strands are colored by tableau row. Reading from bottom to top,
the strand starting at position $i$ ends at the permutation value
at $i$.}
\label{fig:permutation-pair}
\end{figure}

For example, the first four values are
\[
\begin{array}{c|cccc}
r&0&1&2&3\\\hline
\text{symmetric group}&S_{12}&S_{30}&S_{48}&S_{66}\\
\text{cell shape}&(4,4,4)&(10,10,10)&(16,16,16)&(22,22,22)\\
\mu(x_{2r},y_{2r})&1&2&4&8
\end{array}
\]
(The length gap is
$\delta_r=\ell(y_{2r})-\ell(x_{2r})=34r+11$, so the coefficient
occurs at degree $17r+5$. Since
$\mu(x_{2r},y_{2r})=2^{(\delta_r-11)/34}$, the growth is
exponential in the length gap itself.) Thus the permutations, their cell and the degree of the
coefficient are all explicit.
Below, $m$ counts the nine-letter blocks; the theorem uses $m=2r$.
For odd $m$, the corresponding $\mu$-coefficient is zero.

\subsection{The web picture}

An $\mathfrak{sl}_3$ web is a planar trivalent graph representing an
invariant tensor of $\mathfrak{sl}_3$, cf. \cite{Kuperberg}. For three-row rectangles, standard tableaux index
both the web basis and the dual canonical basis of the same invariant
space. Frenkel--Khovanov--Kirillov (FKK) identify tensor coordinates of dual
canonical vectors with normalized parabolic KL polynomials \cite{FKK}.
This gives a way to look for large KL coefficients using pictures.

Khovanov--Kuperberg (KK) showed that a basis web need not be dual canonical
\cite{KK}. Their counterexample contains a hexagon surrounded by six
hexagons, the ``hexagon-in-hexagons'' picture. Our family repeats this
pattern along a strip. Heuristically, cf. \autoref{fig:intro-hexagons}, the construction is:
\[
\setlength{\fboxsep}{2pt}
\colorbox{blue!6}{
\begin{minipage}{0.85\textwidth}
\centering
\emph{Put hexagon-in-hexagons patches next to one another,
join them along three strands, and attach fixed start and end pieces.}
\end{minipage}}
\]

\begin{figure}[!ht]
\centering
\begin{tikzpicture}[x=1cm,y=1cm,line width=.7pt]
\foreach \k in {0,1,2}{
\begin{scope}[shift={({3*\k},0)}]
\draw[black!45,dashed,rounded corners=6pt]
(-1.15,-1.2) rectangle (1.15,1.2);
\fill[black!12]
(0:.4)--(60:.4)--(120:.4)--(180:.4)--(240:.4)--(300:.4)--cycle;
\draw
(0:.4)--(60:.4)--(120:.4)--(180:.4)--(240:.4)--(300:.4)--cycle;
\foreach \a in {30,90,150,210,270,330}{
\begin{scope}[shift={({.4*sqrt(3)*cos(\a)},
{.4*sqrt(3)*sin(\a)})}]
\draw
(0:.4)--(60:.4)--(120:.4)--(180:.4)--(240:.4)--(300:.4)--cycle;
\end{scope}
}
\end{scope}
}
\foreach \k in {0,1}{
\foreach \y in {-.35,0,.35}{
\draw ({3*\k+1.15},\y)--({3*(\k+1)-1.15},\y);
}
}
\node at (-1.45,.2) {$\cdots$};
\node at (7.45,.2) {$\cdots$};
\end{tikzpicture}
\caption{A schematic of the repeated hexagon-in-hexagons pattern and
the three-strand gluing. The central hexagons are shaded. Orientations,
external legs and the precise attachments are omitted; the actual
pieces are given in \autoref{sec:construction}.}
\label{fig:intro-hexagons}
\end{figure}

The exact construction is slightly different; see \autoref{fig:family24} for examples.

\subsection{The proof in a nutshell}

Repeating the picture gives a web, but we need the corresponding
dual canonical vector before applying the KL interpretation. Write
$W_w$ for the web with tableau label $w$, and $b_w$ for the dual
canonical vector with the same label. The change of basis has the form
\[
b_w=W_w+\sum_{z\prec w}\mathcal K_{z,w}(\tpar)W_z,\qquad \mathcal K_{z,w}(\tpar)\in\Z[\tpar,\tpar^{-1}],
\]
where the sum runs over lower tableau labels in the triangular order.
This unitriangular change of basis is described in
\cite[Corollary 3.7]{TubbenhauerCrystal}, using the intermediate
crystal basis construction of Leclerc--Toffin \cite{LeclercToffin}.
The correction removes constant and negative powers of $\tpar$ from the
nonleading tensor coordinates while preserving bar invariance.

For our family, the FKK normalization turns the highest allowed KL
degree into the power $\tpar$. Thus $\mu$ is the coefficient of $\tpar$ in the
relevant tensor coordinate; see \autoref{thm:FKK}.

This correction matters already for the first example with $\mu=2$.
The relevant coordinate of the raw web begins
$\tpar^{-1}+18\tpar+125\tpar^3+\cdots$, whereas the coordinate of its dual
canonical vector begins $2\tpar+70\tpar^3+\cdots$. The lower terms remove
$\tpar^{-1}+16\tpar+\cdots$, so the required coefficient of $\tpar$ is $18-16=2$.
\autoref{sec:basischange} gives the change of basis, with pictures
of the lower webs and a smaller four-dimensional example.

For the whole family, we incorporate the correction into the repeated
block. The resulting map uses the fixed interface
$V^{\otimes3}\oplus\one$, of dimension $28$, where $V$ is the
three-dimensional fundamental representation of $\mathfrak{sl}_3$ and $\one$ is the
trivial representation. The number of external strands grows, but
the interface stays the same.

The proof then has two parts. First, local web identities and degree bounds show that the
corrected tensor is dual canonical for any number of blocks, see
\autoref{prop:canonical}.
Second, a two-state calculation gives the required coefficient, see
\autoref{loc:coefficient}. Adding two blocks doubles its first-order
coefficient. The FKK identification and the ordinary KL degree bound
then give $\mu(x_{2r},y_{2r})=2^r$.

\begin{remark}
Web algebras provide a categorical explanation of the change of
basis: webs define projective modules, while suitably normalized
indecomposable projectives give dual canonical vectors
\cite{MackaayDual,MPT,RobertFamily,RobertCharacterization,TubbenhauerGln}. Our proof uses
the tensor characterization of the dual canonical basis.
\end{remark}

\subsection{Computations and Lean verification}

We found the family by computing left cell $W$-graphs using Nguyen's
method \cite{Nguyen}. We computed the cell $W$-graphs for
all shapes of size at most $19$ and selected larger shapes. For $(8,6,5)$, the nonzero weights are
$1$ and $5$. For $(9,7,7)$, of dimension $38\,244\,492$, they are
$1,2,3,4,5,7,19$. Completing selected tableau pairs to rectangles
produced web pictures in which the repeated pattern became visible.
This continues the computational study of large KL coefficients in
\cite{LTVBigData}; a different approach to cell bases through
optimization appears in \cite{GoertzenWilliamson}. \autoref{sec:methods} describes the discovery,
including AI assistance. A Lean verification and more code related to the paper 
can be found in \cite{GoLaTuVa-mu-code}.

\begin{remark}\label{rem:verification}
The Lean formalization verifies the local tensor and diagram
calculations, the lattice induction and the two-state readout for
every block count. The cited representation theoretic results, e.g. \cite{FKK,KK}, are
explicit inputs (in other words, we only verify the 
contributions of this paper, not its dependencies).
\end{remark}

\subsection*{Acknowledgments}

We thank the ANU Kioloa Coastal Campus, where the common interest of
TG and DT was discovered at a workshop organized by Anthony Licata
and Oded Yacobi. We thank Keeley Hoek for helping with
translating C code into very efficient Rust code for computing
$W$-graphs, and Lukas Lewark for making the
\texttt{webdraw} program available for computing Kuperberg polynomials.

TG acknowledges funding by the Australian Government
through the ARC Discovery Project DP230102982. DT was supported by 
ARC Future Fellowship FT230100489 and notes for the record that, 
shocking as it may seem, actual human thought went into constructing these pages.
P.V. is supported by the Fonds de la Recherche Scientifique-FNRS (Belgium) under Grant no. J.0235.26.

\section{KL polynomials, cells and the family}\label{sec:preliminaries}

This paper is largely self-contained, although we assume some familiarity with KL combinatorics (for the symmetric group) and webs. Standard references include \cite{BjornerBrenti,Kuperberg}.

\subsection{KL polynomials and the critical coefficient}

Let $S_n$ be the symmetric group with adjacent transpositions
$s_i=(i,i+1)$. We compose permutations by $(xy)(i)=x(y(i))$
and write them in one-line notation. 
Write $\ell(w)$ for the number of inversions of $w$, called its length.
This is also the minimum number of adjacent transpositions in a product equal to $w$;
an expression attaining this minimum is reduced.
The Bruhat relation $x\le y$ means that a reduced expression for $x$
is a subword of a reduced expression for $y$.
The Hecke algebra over $\mathbb Z[\qpar^{1/2},\qpar^{-1/2}]$ has basis
$\{T_w\mid w\in S_n\}$, braid relations, and quadratic relations
$(T_s-\qpar)(T_s+1)=0$. Its KL basis is characterized by
\[
C'_y=\qpar^{-\ell(y)/2}\sum_{x\le y}P_{x,y}(\qpar)T_x,
\qquad \overline{C'_y}=C'_y.
\]
Here $P_{x,y}(\qpar)\in\mathbb Z[\qpar]$, $\overline{\qpar^{1/2}}=\qpar^{-1/2}$,
$\overline{T_w}=T_{w^{-1}}^{-1}$, $P_{y,y}=1$, and
$\deg P_{x,y}\le(\ell(y)-\ell(x)-1)/2$ for $x<y$ \cite{KL}. The following 
are the \emph{$\mu$-coefficients}.

\begin{definition}
We write
\[
\mu(x,y)=
\begin{cases}
[\qpar^{(\ell(y)-\ell(x)-1)/2}]P_{x,y}(\qpar)&x<y\text{ with odd length gap},\\
0&\text{otherwise}.
\end{cases}
\]
The notation $[\qpar^k]f(\qpar)$ means the coefficient of $\qpar^k$ in $f(\qpar)$,
and similarly for other variables. 
\end{definition}

In the definition of $\mu$, the exponent is the allowed top degree,
which need not be the actual degree of the polynomial.
For example, $P_{x,y}=1$ gives $\mu(x,y)=0$ when the length gap is
greater than one, even though the polynomial has leading coefficient one.

\subsection{Left cells and standard tableaux}

The multiplication formula
\[
C'_s C'_w=
\begin{cases}
(\qpar^{1/2}+\qpar^{-1/2})C'_w,&sw<w,\\
C'_{sw}+\displaystyle\sum_{\substack{z<w,sz<z}}\mu(z,w)C'_z,&sw>w,
\end{cases}
\]
explains why the $\mu$-coefficients determine cell representations
\cite{KL}. Define $x\preceq_L y$ if there is a sequence
$y=w_0,w_1,\ldots,w_k=x$ in which $C'_{w_{j+1}}$ occurs with nonzero
coefficient in some $C'_sC'_{w_j}$. The equivalence classes for mutual
reachability are the left cells. The spans of downward-closed
unions of cells are left ideals; successive quotients give the
left cell modules. Descent sets and $\mu$-weights encode their
generator actions in a $W$-graph.

A standard tableau of shape $\lambda\vdash n$ fills the Young diagram
of $\lambda$ with $1,\ldots,n$, increasingly along rows and columns.
Use ordinary row insertion for the Robinson--Schensted--Knuth (RSK) correspondence
$w\mapsto(P(w),Q(w))$, where $P$ is the insertion tableau and $Q$ the
recording tableau. In type $A$, the left cells are exactly the sets
with fixed $Q$, see \cite[Theorem A]{Ariki}; fixing only its shape gives a two-sided cell, cf. \cite[p.~199, Exercise~11(b)]{BjornerBrenti}.
The resulting cell modules, specialized at $\qpar=1$,
are simple and give all simple representations of $S_n$.
They are Specht modules; transposing the partition corresponds to
the sign twist between the two usual KL basis conventions. Throughout
this paper, a cell's shape means the shape of its recording tableau
under row insertion, independently of this convention for labeling representations.

\subsection{Words and the explicit family}

To specify both the diagrams and the permutations, we use three-letter
words. This avoids choosing an auxiliary tableau in the RSK correspondence. We use the English convention for tableaux.

\begin{definition}\label{def:words2}
Order $A<B<C$ and let $N\ge1$. A word of content $(N,N,N)$ contains $N$ copies of each
letter. It is dominant if every prefix satisfies
$\#A\ge\#B\ge\#C$. Its tableau has the positions of $A,B,C$ in its
first, second, and third rows. 
\end{definition}

\begin{lemma}
Dominance is exactly the condition
that this row-increasing tableau is standard.
\end{lemma}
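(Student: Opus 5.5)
We must show that, for a word $w=w_1\cdots w_{3N}$ of content $(N,N,N)$, the tableau $T_w$ (with the positions of the letters $A,B,C$ in rows one, two, three) is standard exactly when $w$ is dominant. The plan is to exploit that $T_w$ is automatically row increasing, reduce standardness to column increase, and then translate column increase into prefix counts.

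Since the entries of each row of $T_w$ are listed in increasing order, row strictness holds by construction. Each row has length $N$, so the shape is $(N,N,N)$. Hence $T_w$ is standard if and only if its columns strictly increase. Let $a_k,b_k,c_k$ denote the positions of the $k$th occurrence of $A,B,C$ in $w$, respectively. Column increase then means $a_k<b_k<c_k$ for all $k=1,\dots,N$.

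For the forward direction, assume $w$ is dominant and fix $k$. Consider the prefix ending at position $b_k$. It contains exactly $k$ copies of $B$, so dominance gives at least $k$ copies of $A$ in this prefix. Hence $a_k\le b_k$, and $a_k\neq b_k$ because those positions carry different letters. The same argument applied to the prefix ending at $c_k$ gives $b_k<c_k$.

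For the converse, assume $a_k<b_k<c_k$ for all $k$ and take any prefix of length $p$. Suppose this prefix contains $j$ copies of $B$. If $j=0$, the inequality $\#A\ge\#B$ is trivial. Otherwise $b_j\le p$, so $a_j<b_j\le p$, and the prefix contains at least $j$ copies of $A$. In the same way, $\#B\ge\#C$ follows from $b_j<c_j$.

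These two directions together prove the equivalence. No step is a genuine obstacle. The only care needed is the off-by-one bookkeeping, namely translating ``the $k$th occurrence lies before position $p$'' into ``the prefix count is at least $k$''.
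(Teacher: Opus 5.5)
Your proof is correct and follows the same route as the paper: reduce standardness to the column inequalities $a_k<b_k<c_k$ and identify these with the prefix conditions $\#A\ge\#B\ge\#C$. You spell out both directions of that equivalence, which the paper asserts in a single sentence.
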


\begin{proof}
The $i$th $A$ precedes the $i$th $B$, and the $i$th $B$
precedes the $i$th $C$, exactly when every prefix has
$\#A\ge\#B\ge\#C$. These are precisely the column inequalities.
\end{proof}

\begin{definition}\label{def:words}
For any word $w$ of this content, define $\pi(w)\in S_{3N}$ by listing
the positions of $C$, then of $B$, then of $A$, increasingly within
each list. 
\end{definition}

\begin{example}
For instance,
\[
ABCABC\longleftrightarrow
\begin{array}{cc}1&4\\2&5\\3&6\end{array},
\qquad \pi(ABCABC)=(3,6,2,5,1,4),
\]
where the final entry is one-line notation.
\end{example}

The map in \autoref{def:words} does more than assign permutations
to tableaux of equal shape: it fixes their recording tableau.

\begin{lemma}\label{lem:rectangular-cell}
Let $w$ be a dominant word of content $(N,N,N)$, with tableau $T(w)$.
Then
\[
P\big(\pi(w)\big)=T(w),\qquad
Q\big(\pi(w)\big)=
\begin{array}{cccc}
1&2&\cdots&N\\
N+1&N+2&\cdots&2N\\
2N+1&2N+2&\cdots&3N
\end{array}.
\]
Consequently, all these permutations belong to a single left cell
of shape $(N,N,N)$.
\end{lemma}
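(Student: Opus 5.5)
The plan is to compute the row insertion of $\pi(w)$ directly. The proof reduces to a three-phase bookkeeping argument whose only input is the column inequalities. Write $a_1<\dots<a_N$, $b_1<\dots<b_N$, $c_1<\dots<c_N$ for the positions of $A$, $B$, $C$ in $w$. By \autoref{def:words}, the one-line notation of $\pi(w)$ is
\[
(c_1,\dots,c_N,\,b_1,\dots,b_N,\,a_1,\dots,a_N).
\]
By the preceding lemma, dominance of $w$ is equivalent to $T(w)$ being standard, i.e.\ $a_j<b_j<c_j$ for all $j$. This is the only property of $w$ I will use.

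I will insert the entries in order and maintain an explicit invariant for the insertion tableau after each step.

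\textbf{Phase 1.} Inserting $c_1,\dots,c_N$ gives a single row $c_1\cdots c_N$, because the sequence is increasing. The new boxes are the first $N$ boxes of row one, so $Q$ has first row $1,\dots,N$.

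\textbf{Phase 2.} I claim that after inserting $b_1,\dots,b_j$ the tableau has rows
\[
b_1\cdots b_j\,c_{j+1}\cdots c_N \qquad\text{and}\qquad c_1\cdots c_j.
\]
For the inductive step, insert $b_{j+1}$. In row one the entries exceeding $b_{j+1}$ are exactly $c_{j+1},\dots,c_N$: indeed $b_i<b_{j+1}$ for $i\le j$, and $b_{j+1}<c_{j+1}$. So $b_{j+1}$ bumps $c_{j+1}$. Since $c_{j+1}$ exceeds all of row two, it is appended at the end of row two. Hence the new box is at position $j+1$ of row two, and $Q$ acquires second row $N+1,\dots,2N$.

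\textbf{Phase 3.} The analogous invariant after inserting $a_1,\dots,a_j$ is
\[
a_1\cdots a_j\,b_{j+1}\cdots b_N,\qquad b_1\cdots b_j\,c_{j+1}\cdots c_N,\qquad c_1\cdots c_j.
\]
Inserting $a_{j+1}$ bumps $b_{j+1}$, using $a_{j+1}<b_{j+1}$ and $a_i<a_{j+1}$ for $i\le j$. Then $b_{j+1}$ bumps $c_{j+1}$ from row two, using $b_{j+1}<c_{j+1}$. Finally $c_{j+1}$ is appended to row three. So the new box lies in row three, at position $j+1$.

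At the end, $P(\pi(w))$ has rows $a$, $b$, $c$, which is $T(w)$, and $Q(\pi(w))$ is the row-superstandard tableau displayed in the statement.

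The consequence then follows from the cited characterization of left cells in type $A$ as the fibres of $Q$ \cite[Theorem A]{Ariki}. All $\pi(w)$ have the same $Q$, of shape $(N,N,N)$.

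The only delicate point is Phase 3, where each insertion triggers a two-step bumping chain. There I must check at each step that the bumped element is the correct one, i.e.\ the smallest entry exceeding the inserted value in each row. This is exactly where both inequalities $a_j<b_j$ and $b_j<c_j$, together with the monotonicity of each list, are used. Everything else is routine.
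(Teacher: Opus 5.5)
Your proof is correct and takes essentially the same approach as the paper. You row-insert $(c_1,\dots,c_N,b_1,\dots,b_N,a_1,\dots,a_N)$ in three phases, track the bumping using only $a_j<b_j<c_j$ and the monotonicity of each list, and then conclude via the classification of left cells as fibres of $Q$; your explicit phase invariants just spell out the bumping steps the paper states more tersely.
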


\begin{proof}
Write the three rows of $T(w)$ as $(a_i)$, $(b_i)$, $(c_i)$, so that
$a_i<b_i<c_i$. The permutation $\pi(w)$ is the concatenation of
$(c_i)$, $(b_i)$, $(a_i)$. Inserting the first block creates the first
row. When $b_i$ is inserted, the first row is
$b_1,\ldots,b_{i-1},c_i,\ldots,c_N$: thus $c_i$ is bumped and appended
to the second row. The next block similarly bumps $b_i$ and then
$c_i$, creating the $i$th box of the third row. Hence the insertion
tableau is $T(w)$, and successive blocks create all boxes in rows
one, two and three, in that order. This gives the displayed $Q$.
The type $A$ cell classification gives the last assertion.
\end{proof}

\autoref{lem:rectangular-cell} gives the common left cell assertion
in \autoref{thm:main}.

\begin{definition}\label{D:words}
For $m\ge0$, define
\begin{equation}\label{eq:words}
\begin{gathered}
\wordstart=\word{ABAAAC}\text{ (start)},\qquad \wordmiddle=\word{BBBAACACC}\text{ (middle)},\qquad \wordend=\word{BBCBCC}\text{ (end)},\\
N_m=4+3m,\qquad a_m=(ABC)^{N_m},\qquad c_m=\wordstart\wordmiddle^m\wordend,\\
x_m=\pi(a_m),\qquad y_m=\pi(c_m).
\end{gathered}
\end{equation}
We call $m$ the block count; $\wordstart,\wordmiddle,\wordend$ are the prefix,
repeated block and tail, respectively.
\end{definition}

\begin{lemma}
The words $a_m,c_m$ are dominant, and $x_m,y_m$ correspond to
tableaux of shape $(N_m,N_m,N_m)$.
\end{lemma}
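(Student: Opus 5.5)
We need to show two things: that $a_m$ and $c_m$ have content $(N_m,N_m,N_m)$, and that every prefix satisfies $\#A\ge\#B\ge\#C$. Once both hold, the shape assertion follows from the dominance lemma together with \autoref{lem:rectangular-cell}. The case of $a_m=(ABC)^{N_m}$ is immediate: each block $ABC$ is balanced, so its prefixes have counts $(k+1,k,k)$, $(k+1,k+1,k)$ or $(k,k,k)$, all of which satisfy the inequalities.

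For $c_m$ I will first tally letter counts block by block. $\wordstart=ABAAAC$ has counts $(4,1,1)$. $\wordmiddle=BBBAACACC$ has counts $(3,3,3)$. $\wordend=BBCBCC$ has counts $(0,3,3)$. The total is therefore $(4+3m,\,4+3m,\,4+3m)=(N_m,N_m,N_m)$, as required.

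For dominance it is convenient to track the two differences $d_1=\#A-\#B$ and $d_2=\#B-\#C$ along the word; we need both to stay nonnegative.
\begin{itemize}
\item Through $\wordstart$, the successive prefixes give $(d_1,d_2)$ equal to $(1,0),(0,1),(1,1),(2,1),(3,1),(3,0)$. These are all nonnegative, and the word ends at the state $(3,0)$.
\item Each copy of $\wordmiddle$ is balanced, so it returns the state to $(3,0)$. Starting from $(3,0)$, its prefixes give $(2,1),(1,2),(0,3),(1,3),(2,3),(2,2),(3,1),(3,0),(3,-1)$. The last value is wrong: the last step is the letter $C$, which lowers $d_2$, so the final three steps must be recomputed.
\end{itemize}

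Redoing the middle block letter by letter from $(3,0)$:
\begin{itemize}
\item $B\mapsto(2,1)$, $B\mapsto(1,2)$, $B\mapsto(0,3)$;
\item $A\mapsto(1,3)$, $A\mapsto(2,3)$;
\item $C\mapsto(2,2)$, $A\mapsto(3,2)$;
\item $C\mapsto(3,1)$, $C\mapsto(3,0)$.
\end{itemize}
All values are nonnegative and the block returns to $(3,0)$. By induction on the number of middle blocks, the state is $(3,0)$ before $\wordend$, for every $m$.

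Finally, starting from $(3,0)$, $\wordend$ gives
\begin{itemize}
\item $B\mapsto(2,1)$, $B\mapsto(1,2)$, $C\mapsto(1,1)$;
\item $B\mapsto(0,2)$, $C\mapsto(0,1)$, $C\mapsto(0,0)$.
\end{itemize}
These are nonnegative and end at zero, so $c_m$ is dominant.

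The argument is routine. The only point needing care is the letter-by-letter bookkeeping inside $\wordmiddle$, and the invariance of the state $(3,0)$ is what makes the induction over the blocks work.
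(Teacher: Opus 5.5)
Your proof is correct and follows the paper's argument: tally the contents $(4,1,1)$, $(3,3,3)$, $(0,3,3)$, track the prefix differences through $\wordstart$ to $(3,0)$, show each copy of $\wordmiddle$ preserves dominance and returns to $(3,0)$, let $\wordend$ close at $(0,0)$, and read off the shape from \autoref{lem:rectangular-cell}. In a final write-up, delete your first, erroneous pass through $\wordmiddle$ and keep only the corrected letter-by-letter table; the slip there is at the seventh letter $A$, which gives $(3,2)$ rather than $(3,1)$, not at the last letter as you suggest.
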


\begin{proof}
Dominance of $a_m=(ABC)^{N_m}$ is immediate.
The contents of $\wordstart,\wordmiddle,\wordend$ are $(4,1,1)$, $(3,3,3)$ and $(0,3,3)$,
respectively, so both words have content $(N_m,N_m,N_m)$.

Every prefix of $\wordstart$ is dominant, and its final prefix differences
are $(3,0)$. Starting from these differences, every prefix of $\wordmiddle$
preserves dominance, and the whole block returns to $(3,0)$.
The final block $\wordend$ preserves dominance and ends at $(0,0)$.
Thus $c_m$ is dominant. The preceding lemma gives the claimed
tableau shape.
\end{proof}

\begin{remark}\label{rem:minimal-cell}
The tableau with row word $a_m$ is filled down columns. The permutation
$x_m$ is the least element of its left cell in Bruhat order and in left
weak order; cf. \cite[Remark 6.21]{Nguyen}. Indeed, let $u$ relabel this
tableau to any standard tableau $T(w)$ of the same shape. Then
$\pi(w)=ux_m$ and $\ell(\pi(w))=\ell(u)+\ell(x_m)$: an inversion of $u$
can only go from an earlier column and a lower row to a later column
and a higher row. In particular, $x_m\le y_m$.
\end{remark}

\begin{lemma}\label{lem:lengths}
For the words in \autoref{eq:words}, $x_m<y_m$ and
\[
\ell(x_m)=\frac{27m^2+81m+60}{2},\qquad
\ell(y_m)=\frac{27m^2+115m+82}{2},\qquad
\delta_m:=\ell(y_m)-\ell(x_m)=17m+11.
\]
\end{lemma}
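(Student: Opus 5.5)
The plan is to turn lengths into a count of letter pairs in the word, and then evaluate that count block by block using only the contents of $\wordstart$, $\wordmiddle$, $\wordend$.

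\textbf{Step 1: lengths as pair counts.} For any word $w$ of content $(N,N,N)$, $\pi(w)$ is the concatenation of three increasing lists: the $C$-positions, then the $B$-positions, then the $A$-positions. So an inversion of $\pi(w)$ can only pair an entry from an earlier list with one from a later list. Take a $C$-position $p$ and a $B$-position $q$. The value $p$ appears before $q$ in one-line notation, so they form an inversion exactly when $p>q$, that is, when the $B$ precedes the $C$ in $w$. The same holds for the $(A,C)$ and $(A,B)$ lists. Hence
\[
\ell(\pi(w))=\#\{\text{pairs of positions } i<j \text{ with } w_i<w_j\},
\]
the number of strictly increasing letter pairs of $w$ for the order $A<B<C$.

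\textbf{Step 2: the element $x_m$.} For $a_m=(ABC)^N$, count pairs of an $A$ before a $B$. The $A$ in copy $i$ precedes the $B$ in copy $j$ exactly when $j\ge i$, which gives $N(N+1)/2$ pairs. The $(A,C)$ and $(B,C)$ counts are the same by symmetry. So
\[
\ell(x_m)=\tfrac{3N(N+1)}{2}.
\]
With $N=N_m=3m+4$ this equals $\tfrac{3(3m+4)(3m+5)}{2}=\tfrac{27m^2+81m+60}{2}$.

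\textbf{Step 3: the element $y_m$.} Write $c_m=\wordstart\,\wordmiddle^m\,\wordend$ as a concatenation of $m+2$ blocks. Increasing pairs split into two kinds.
\begin{enumerate}
\item \emph{Within-block pairs.} These are counted once for $\wordstart$, $m$ times for $\wordmiddle$, and once for $\wordend$. Each is a small direct count on a word of length $6$ or $9$.
\item \emph{Cross-block pairs.} These depend only on contents. For an earlier block of content $(a,b,c)$ and a later block of content $(a',b',c')$, the cross contribution is $ab'+ac'+bc'$.
\end{enumerate}
The contents are $(4,1,1)$, $(3,3,3)$ and $(0,3,3)$, so the cross-block pairs contribute:
\begin{itemize}
\item $\wordstart$ before each $\wordmiddle$: $4\cdot3+4\cdot3+1\cdot3=27$, for a total of $27m$;
\item $\wordstart$ before $\wordend$: $4\cdot3+4\cdot3+1\cdot3=27$;
\item each ordered pair of $\wordmiddle$-blocks: $27$, for a total of $27\binom m2$;
\item each $\wordmiddle$ before $\wordend$: $3\cdot3+3\cdot3+3\cdot3=27$, for a total of $27m$.
\end{itemize}
Adding the within-block counts gives $\ell(y_m)$ as an explicit quadratic in $m$. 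I will then check that it equals $\tfrac{27m^2+115m+82}{2}$; the quadratic term $27m^2/2$ already matches. Subtracting Step 2 gives $\delta_m=17m+11$.

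\textbf{Step 4: strict inequality.} \autoref{rem:minimal-cell} gives $x_m\le y_m$. Since $\delta_m>0$, the two elements are distinct, so $x_m<y_m$.

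\textbf{Main obstacle.} There is no conceptual obstacle; the risk is bookkeeping in Step 3. In particular, Step 1 must be applied with the correct direction (increasing pairs, not inversions of the word), and the within-block counts for $\wordmiddle=BBBAACACC$ must be done carefully. As a safeguard, I will check the resulting formula against the case $m=0$, the word $ABAAACBBCBCC$, by a direct count.
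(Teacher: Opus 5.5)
Your proposal is correct and follows essentially the same route as the paper. Both count increasing letter pairs (the paper phrases this as inversions of $\pi(w)^{-1}$), use the uniform cross-piece contribution $27$, take $x_m\le y_m$ from \autoref{rem:minimal-cell}, and your deferred within-block counts are $6$, $17$, $8$, so $\ell(y_m)=14+17m+27\bigl(2m+1+\binom m2\bigr)$, which expands to the stated formula.
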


\begin{proof}
Put $N=N_m$. The inequality $x_m\le y_m$ follows from
\autoref{rem:minimal-cell}.

For $i<j$, the pair $(i,j)$ is an inversion of $\pi(w)^{-1}$
exactly when $w_i<w_j$.
Within $\wordstart,\wordmiddle,\wordend$, there are $6,17,8$ such pairs,
respectively. Each pair of distinct pieces, in the order in $c_m$,
contributes $27$. Hence
\[
\ell(y_m)=6+17m+8+27\left(2m+1+\binom m2\right),\qquad
\ell(x_m)=3N+3\binom N2.
\]
Expanding gives the stated lengths and the strict inequality.
\end{proof}

For odd $m$, the length gap in
\autoref{lem:lengths} is even, so $\mu(x_m,y_m)=0$.
Thus it remains to compute $\mu(x_m,y_m)$ for even $m$.

\section{Webs and tensor coefficients}\label{sec:webs}

We now turn to webs. The basic idea is simple. The $\mathfrak{sl}_3$ invariant spaces $\Inv(V^{\otimes3N})$,
where $V$ is the three-dimensional fundamental representation, carry two natural bases: the web basis $W_w$, which is concrete and comparatively easy to compute, and the dual canonical basis $b_w$, which is subtler but, up to the standard identifications, corresponds to the KL basis. Thus our main task is to understand the change of basis from $W_w$ to $b_w$. Once this is explicit enough for our family, \autoref{thm:main} follows. This will occupy most of the paper.

\subsection{The planar calculus}

Fix $\F=\mathbb Q(\tpar)$ and $R=\mathbb Z[\tpar]$, with $\tpar$ a formal
parameter. Joining strands composes maps; placing
pictures side by side takes their tensor product. 
Moreover, before we even start recalling webs, we have the structure
of an operad/planar algebra:

\begin{lemma}\label{lem:rotation}
For a pair of three-row rectangular tableaux, let $x,y$ be their
associated permutations from \autoref{def:words}. The symmetrized
coefficient $\mu(x,y)+\mu(y,x)$ is unchanged after simultaneously
rotating their webs and applying \autoref{def:words} to the resulting
tableaux.
\end{lemma}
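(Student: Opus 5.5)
The plan is to read ``rotating the web'' as promotion on the three-row rectangular tableaux, that is, as the action of the long cycle on the cell module. Then I want to show that this action permutes the KL basis up to units, so that it preserves the symmetrized $W$-graph weights. The quantity $\tilde\mu(x,y):=\mu(x,y)+\mu(y,x)$ is exactly the edge weight of the $W$-graph: for $x,y$ in a common left cell and any $s$ with $sx<x$, $sy>y$, the coefficient of $C'_x$ in $C'_sC'_y$ is $\tilde\mu(x,y)$, by the multiplication formula together with the fact that only one of $x<y$, $y<x$ can hold. By \autoref{lem:rectangular-cell}, all permutations $\pi(w)$ lie in one left cell with fixed $Q$. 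Hence $\tilde\mu(\pi(w),\pi(w'))$ can be read inside the cell module $M_{(N,N,N)}$ with basis $\{C'_{\pi(w)}\}$ indexed by tableaux $T(w)$.

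The key steps, in order, are as follows.
\begin{enumerate}
\item Identify $M_{(N,N,N)}$ with $\Inv(V^{\otimes 3N})$, via FKK and the Hecke (Schur--Weyl) action. Under this identification the KL basis goes to the dual canonical basis $b_w$ up to the standard normalization, and rotating a web by one boundary point is the cyclic shift of tensor factors, which acts on tableaux by promotion.
\item Show that the cyclic rotation operator $\rho$ on $\Inv(V^{\otimes3N})$ maps each $b_w$ to $\pm\tpar^{k}b_{\partial w}$, where $\partial$ is promotion. Since the invariant space is one-dimensional per weight pairing, $\rho$ commutes with the bar involution up to a fixed power of $\tpar$. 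It also preserves the integral lattice $\sum R\,\text{(tensor basis)}$, because the cup/cap used in rotation have coefficients that are monomials in $\tpar$. The tensor characterization of the dual canonical basis (bar invariance plus leading term modulo $\tpar R$) then forces the claim. This is the Rhoades/Berenstein--Zelevinsky type statement for rectangles.
\item On the Hecke side, $\rho$ intertwines the action of $C'_{s_i}$ with that of $C'_{s_{i+1}}$ for $1\le i\le 3N-2$. This is because conjugation by the long cycle sends $s_i$ to $s_{i+1}$, and rotation of a picture visibly moves a crossing/generator one step.
\item Conclude. Take $x=\pi(w)$ and $y=\pi(w')$ with $\tilde\mu(x,y)\ne0$, and choose $s_i$ distinguishing their descents. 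If $i\le 3N-2$, apply $\rho$: the coefficient of $b_{\partial w}$ in $C'_{s_{i+1}}b_{\partial w'}$ equals the old coefficient up to the sign and power coming from the units. Descents transform compatibly, since the descent set of a tableau shifts under promotion. Because both sides are nonnegative integers and bar invariant, the units cancel. If the only available $s$ is $s_{3N-1}$, use $\rho^{-1}$ instead, which shifts indices down, so that $s_{3N-1}$ goes to $s_{3N-2}$. Doing this for both $(w,w')$ and $(\partial w,\partial w')$ gives equality of $\tilde\mu$ in both directions, including the case where both sides vanish.
\end{enumerate}

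I expect the main obstacle to be step (b), namely controlling the normalization so that $\rho(b_w)$ is exactly a unit times $b_{\partial w}$. Rotation of tensor factors is not $U_q$-linear on the nose: it involves a pivotal or ribbon twist, so bar invariance holds only up to a global $\tpar$-shift. I would first compute this shift on the single-strand cup and cap, and check that it is uniform on the whole weight-zero invariant space. Only then would I apply the uniqueness part of the dual canonical characterization. The edge-case bookkeeping for $s_{3N-1}$ in step (d) is routine by comparison.
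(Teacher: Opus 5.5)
\textbf{There is a genuine gap, in step (b) and again in step (d).}

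The paper does not reprove this lemma. It combines two citations: \cite[Theorem 2.5]{PPR}, that rotating a three-row rectangular web is promotion of its tableau (your step (a)), and \cite[Proposition 3.2]{Rhoades}, that simultaneous promotion preserves the symmetrized coefficient. Your steps (b)--(d) try to reprove Rhoades' input in a $\tpar$-deformed form, but the decisive step (b) is not justified.

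With the cup and cap of \autoref{eq:localmaps}, rotating by one boundary point (vertex $1$ moved to the end) acts by $e_{s_1s_2\cdots s_n}\mapsto\tpar^{\phi(s_1)}e_{s_2\cdots s_ns_1}$, where $\phi(A)=2$, $\phi(B)=0$, $\phi(C)=-2$. For $s_1=C$ the cap contributes $\sigma(AC)=\tpar^{-2}$, and one checks that this map fixes the tripod $\tau$ of \autoref{eq:tripod}. So monomial coefficients do not give lattice preservation: coordinates at states beginning with $C$ are divided by $\tpar^2$. Your $\rho$ preserves $\bigl(\bigoplus_sRe_s\bigr)\cap\Inv(V^{\otimes3N})$ exactly when $[e_{Cs'}]b_w\in\tpar^2R$ for all $w,s'$, and this you would have to prove.

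Even granting it, bar invariance and integrality only give $\rho(b_w)=\sum_zc_zb_z$ with $c_z\in R$ bar invariant, i.e.\ $c_z\in\Z$. To get a single term $\pm b_{\partial w}$ you must show that $\rho(b_w)\bmod\tpar$, which equals $\sum_{s'}\bigl([\tpar^2][e_{Cs'}]b_w\bigr)e_{s'C}$, is $\pm e_{\partial w}$. Identifying this leading term with the promoted word is the real content of the cited result, and your proposal gives no argument for it.

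The obstacle you anticipate is not there. $\rho$ sends webs to webs, so by \autoref{lem:web-bar} it commutes with the tensor bar on the invariant space exactly, with no $\tpar$-shift. Also, the justification via the invariant space being one-dimensional per weight pairing is not meaningful: $\dim\Inv(V^{\otimes3N})$ is the number of standard tableaux of shape $(N,N,N)$.

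Step (d) has a second hole. The structure constants of $C'_{s_i}$ only see pairs whose left descent sets differ. Moreover, $\rho$ moves a separating index $i\mapsto i+1$ only for $i\le3N-2$, and your use of $\rho^{-1}$ relates $(w,w')$ to $(\partial^{-1}w,\partial^{-1}w')$, not to $(\partial w,\partial w')$. Suppose $(w,w')$ is separated only by $s_{3N-1}$ while $\partial w,\partial w'$ have equal descent sets; your descent compatibility does not rule this out. Then neither direction compares the two coefficients. For any pair with equal descent sets, $\tilde\mu$ is not a structure constant of any $C'_{s_i}$ at all. So even with (b) in hand, you get invariance only for $W$-graph edges that remain visible. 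That falls short of the lemma for an arbitrary pair, which the paper takes directly from Rhoades.
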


\begin{proof}
Rotation of a three-row rectangular web corresponds to promotion
of its tableau, see \cite[Theorem 2.5]{PPR}, and simultaneous promotion
preserves the symmetrized coefficient, see \cite[Proposition 3.2]{Rhoades}.
Rhoades writes this coefficient as $\mu[P,Q]$; the maximum of the
two ordered coefficients equals our sum, since at most one is nonzero.
\end{proof}

We draw most webs in a disk. When a picture specifies a map
or a tensor coordinate, its boundary order and labels are fixed.

\begin{definition}\label{def:web}
An $A_2$ web, or $\mathfrak{sl}_3$ web, is a planar graph in a disk or rectangle, with
ordered univalent boundary vertices and trivalent internal vertices.
Internal vertices are bipartite, and we draw them hollow and filled
for sources and sinks. Every edge is oriented from a source to a sink;
boundary signs use the input/output convention below. Closed circle components
are allowed. The boundary order, boundary signs, and cyclic orders of
edges at vertices are part of the data.

Set
$d=\tpar+\tpar^{-1}$, and $e=\tpar^2+1+\tpar^{-2}=d^2-1$.
The web category consists of $\F$-linear combinations of these
webs, modulo the circle, digon, and square relations in
\autoref{fig:relations}.

A web is non-elliptic if it has no circle component and no
internal digon or square face.
\end{definition}

\begin{figure}[!ht]
\centering
\begin{tikzpicture}[x=.8cm,y=.8cm,font=\small,baseline=0]
\draw[web] (0,0) circle (.45);
\node at (1.25,0) {$=e$,};
\begin{scope}[shift={(3,0)}]
\draw[web] (-.85,0)--(-.4,0) (.4,0)--(.85,0);
\draw[web] (-.4,0) to[bend left=55] (.4,0);
\draw[web] (-.4,0) to[bend right=55] (.4,0);
\node[source] at (-.4,0) {};\node[sink] at (.4,0) {};
\node at (1.25,0) {$=d$};\draw[web] (1.8,0)--(2.9,0);
\node at (3.1,0) {$,$};
\end{scope}
\begin{scope}[shift={(8.1,0)}]
\draw[web] (-.4,-.4) rectangle (.4,.4);
\draw[web]
  (-.4,.4)--(-.8,.8) (.4,.4)--(.8,.8)
  (-.4,-.4)--(-.8,-.8) (.4,-.4)--(.8,-.8);
\node[source] at (-.4,.4) {};
\node[sink] at (.4,.4) {};
\node[sink] at (-.4,-.4) {};
\node[source] at (.4,-.4) {};
\node at (1.4,0) {$=$};
\begin{scope}[shift={(2.8,0)}]
\draw[web] (-.8,.8) to[bend left=20] (-.8,-.8);
\draw[web] (.8,.8) to[bend right=20] (.8,-.8);
\end{scope}
\node at (4,0) {$+$};
\begin{scope}[shift={(5.2,0)}]
\draw[web] (-.8,-.8) to[bend left=20] (.8,-.8);
\draw[web] (-.8,.8) to[bend right=20] (.8,.8);
\end{scope}
\node at (6.4,0) {$.$};
\end{scope}
\end{tikzpicture}
\caption{The $A_2$ relations. Hollow and filled internal vertices are
sources and sinks, respectively. All undisplayed parts of each web
remain fixed. Reversing all vertex types gives the same relations.}
\label{fig:relations}
\end{figure}

\begin{remark}
For the reader familiar with e.g. \cite{Kuperberg}:
hollow and filled internal vertices specify orientation, so arrows
can be omitted. At a positive output boundary vertex the edge enters the diagram;
at a positive input it leaves the diagram. Negative signs reverse
these conventions.
\end{remark}

Let $V^+$ be the three-dimensional fundamental $U_{\vpar}(\mathfrak{sl}_3)$
module and $V^-$ its dual, in the normalization of \autoref{eq:localmaps} (we follow \cite{KK}); put $V=V^+$. 
An ordered sequence of signs represents a tensor product of these
modules. Vertical composition and horizontal juxtaposition represent
composition and tensor product of maps. A web with no inputs and $n$
positive outputs is therefore a map $\one\to V^{\otimes n}$, or
equivalently an invariant tensor. Here $\one$ is the one-dimensional
trivial module.

\begin{proposition}\label{thm:spider}
Non-elliptic $A_2$ webs form a basis of the corresponding
invariant tensor space. The web relations present the associated
intertwiner calculus.
\end{proposition}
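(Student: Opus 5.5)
This is Kuperberg's theorem, so I would follow his argument rather than give a new one. The proof has two parts: spanning plus presentation, and linear independence.

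First, I would show that the web relations hold in the representation category of $U_{\vpar}(\mathfrak{sl}_3)$. I fix the images of the cup, cap and the two trivalent vertices as the explicit intertwiners of \autoref{eq:localmaps}: the determinant-type maps $\one\to V^{\otimes3}$ and $V\otimes V\to V^-$, with the appropriate powers of $\tpar$. Then I check by direct computation on the standard weight basis that:
\begin{itemize}
\item the circle evaluates to the quantum dimension $e=[3]$;
\item the digon evaluates to $d=[2]$ times the identity;
\item the square relation holds.
\end{itemize}
Since every relation involves at most four boundary points, this is a finite check. Hence the web calculus maps to the intertwiner category.

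Second, I would prove spanning by reduction. Every closed face of a planar trivalent bipartite graph in a disk has even length, and an Euler characteristic count shows that a web with a circle, digon or square always reduces. Applying the relations strictly decreases the number of vertices, so every web is a combination of non-elliptic ones. Fullness of the functor, i.e. that webs span all intertwiners, follows from the fact that the trivalent vertices and cups generate the representation category: every $V^{\pm}$ tensor power decomposes using $V\otimes V\cong V^-\oplus S^2V$, and the $S^2V$ summand is cut out by the idempotent $\mathrm{id}-d^{-1}(\text{digon})$. Alternatively, I would argue from dimension once independence is known.

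Third, and this is the main obstacle, I need linear independence. I would follow Kuperberg's counting argument:
\begin{itemize}
\item Non-elliptic webs with a given boundary sign string biject with dominant lattice paths (the growth algorithm), equivalently with standard tableaux in the rectangular case.
\item That number equals $\dim\Inv$, since invariants in a tensor product of $V^{\pm}$ are counted by paths in the dominant Weyl chamber that return to zero.
\end{itemize}
Combined with spanning, this gives a basis.

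The delicate point is the bijection: injectivity needs the minimal cut paths, or the tableau reading via the orientation-based depth statistics. Alternatively, I would cite the triangularity with respect to the tensor basis from \cite{KK}, where each non-elliptic web has a distinct leading tensor coordinate with coefficient a power of $\tpar$. Faithfulness of the presentation then follows because the quotient spaces have dimension at most the number of non-elliptic webs, which equals the dimension of the target.
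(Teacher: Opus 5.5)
Your plan follows the same route as the paper, whose proof is only a citation of Kuperberg. Your last variant is complete modulo the cited results: KK's unitriangularity gives independence, the dominant-path count gives the dimension, and reduction gives faithfulness.

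The main line you give for fullness, however, fails as written. Knowing $V\otimes V\cong V^-\oplus S^2V$, with one summand cut out by a web idempotent, does not explain why every intertwiner $V^S\to V^{S'}$ for longer sign strings is a combination of webs. The sentence ``the trivalent vertices and cups generate the representation category'' is exactly the fullness statement, so it is being assumed rather than proved. (The idempotent also uses the I-shaped web $y\lambda$ on $V\otimes V$, not the digon, since $\lambda y=d\,\mathrm{id}_{V^-}$.) Your fallback, getting fullness ``from dimension once independence is known'', is circular. Your third step derives independence from spanning plus the count, so spanning cannot in turn be derived from independence.

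To close the argument you need one non-circular input. The first option is to take independence from KK's triangularity. Distinct non-elliptic webs have distinct leading states with nonzero coefficient, so their images are linearly independent. Their number is $\dim\Inv(V^S)$ by the dominant-path count, so they form a basis, and fullness follows. The second option is to prove generation directly, e.g. by quantum Schur--Weyl duality. $\mathrm{End}_{U_{\vpar}(\mathfrak{sl}_3)}(V^{\otimes n})$ is the image of the Hecke algebra, whose generators act by combinations of the identity and $y\lambda$. Splits, merges and tripods, using $\lambda y=d\,\mathrm{id}$ and $\eta\tau=de$, then reduce every $\mathrm{Hom}(V^S,V^{S'})$ to such endomorphism algebras.

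Two minor points. Termination of the reduction holds simply because each relation lowers the pair (number of vertices, number of circles) lexicographically; circle removal does not lower the vertex count. The Euler characteristic count is what shows that a closed web always contains a circle, digon or square.
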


\begin{proof}
This is the $A_2$ case of \cite[Sections 4--6, Theorems 5.1 and 6.1]{Kuperberg}.
\end{proof}

The non-elliptic webs form bases of the invariant spaces, while the
standard tensors form a basis of the ambient tensor power. For $n=3N$,
the all-positive invariant space is the multiplicity space corresponding
to the rectangular partition $(N,N,N)$ under quantum Schur--Weyl
duality; see \cite[Section 2]{FKK}. Below we work with the tensor
realization.

\subsection{Tableaux and the displayed family}

The dominant word of \autoref{def:words2} determines the leading
boundary state of a non-elliptic web. 
Starting from the tableau, read the second row from left to right,
matching each entry with the nearest unused smaller first-row entry.
Then match the third row with the second row in the same way.
Insert a trivalent vertex above each second-row boundary point,
and replace each crossing by a dumbbell, e.g.
\[
\begin{tikzpicture}[x=.8cm,y=.8cm,font=\small,baseline=0]
\draw[web] (-.8,.8) to (.8,-.8);
\draw[web] (-.8,-.8) to (.8,.8);
\end{tikzpicture}
\rightsquigarrow
\begin{tikzpicture}[x=.8cm,y=.8cm,font=\small,baseline=0]
\draw[web] (.8,.8) to (0,.4);
\draw[web] (-.8,.8) to (0,.4);
\draw[web] (.8,-.8) to (0,-.4);
\draw[web] (-.8,-.8) to (0,-.4);
\draw[web] (0,.4) to (0,-.4);
\node[source] at (0,.4) {};
\node[sink] at (0,-.4) {};
\end{tikzpicture}
\text{ or }
\begin{tikzpicture}[x=.8cm,y=.8cm,font=\small,baseline=0]
\draw[web] (.8,.8) to (.4,0);
\draw[web] (-.8,.8) to (-.4,0);
\draw[web] (.8,-.8) to (.4,0);
\draw[web] (-.8,-.8) to (-.4,0);
\draw[web] (.4,0) to (-.4,0);
\node[source] at (.4,0) {};
\node[sink] at (-.4,0) {};
\end{tikzpicture}
,
\]
where the choice between I (middle) and H (right) is determined by bipartiteness.
We denote the resulting web by $W_w$.

\begin{example}\label{ex:tableau-to-web}
For $w=AABCBC$, the first matching gives $(2,3),(1,5)$,
and the second gives $(3,4),(5,6)$.
The arcs do not cross:
\[
\begin{tikzpicture}[x=.8cm,y=.8cm,font=\small]
\node at (-3.9,0)
  {$\begin{array}{cc}1&2\\3&5\\4&6\end{array}$};
\node at (-2.65,0) {$\longmapsto$};
\foreach \i/\a in {1/180,2/120,3/60,4/0,5/-60,6/-120}
  \coordinate (b\i) at (\a:1.3);
\draw[web,draw=black] (b2) to[bend right=15] (b3);
\draw[web,draw=black] (b1) to[bend right=15] (b5);
\draw[web,draw=black] (b3) to[bend right=15] (b4);
\draw[web,draw=black] (b5) to[bend right=15] (b6);
\foreach \i/\a in {1/180,2/120,3/60,4/0,5/-60,6/-120} {
  \node[circle,draw,fill=white,inner sep=.9pt] at (b\i) {};
  \node at (\a:1.6) {$\i$};
}
\node at (0,-1.9) {arc diagram};
\node at (2.35,0) {$\longmapsto$};
\begin{scope}[shift={(4.7,0)}]
\foreach \i/\a in {1/180,2/120,3/60,4/0,5/-60,6/-120}
  \coordinate (b\i) at (\a:1.3);
\coordinate (R) at (60:.65);
\coordinate (S) at (240:.65);
\draw[web,draw=black]
  (b2)--(R)--(b3) (R)--(b4)
  (b1)--(S)--(b5) (S)--(b6);
\foreach \v in {R,S}
  \node[sink,draw=black,fill=black] at (\v) {};
\foreach \i/\a in {1/180,2/120,3/60,4/0,5/-60,6/-120} {
  \node[circle,draw,fill=white,inner sep=.9pt] at (b\i) {};
  \node at (\a:1.6) {$\i$};
}
\node at (0,-1.9) {$W_w$};
\end{scope}
\end{tikzpicture}
.
\]
Resolving the two boundary junctions gives tripods on
$(2,3,4)$ and $(1,5,6)$.

For $w=ABACBC$, the first matching gives $(1,2),(3,5)$,
and the second gives $(2,4),(5,6)$.
The arcs $(2,4)$ and $(3,5)$ cross once:
\[
\begin{tikzpicture}[x=.8cm,y=.8cm,font=\small]
\node at (-3.9,0)
  {$\begin{array}{cc}1&3\\2&5\\4&6\end{array}$};
\node at (-2.65,0) {$\longmapsto$};
\foreach \i/\a in {1/180,2/120,3/60,4/0,5/-60,6/-120}
  \coordinate (b\i) at (\a:1.3);
\draw[web] (b1) to[bend right=15] (b2);
\draw[web] (b3) to[bend right=15] (b5);
\draw[web] (b2) to[bend right=15] (b4);
\draw[web] (b5) to[bend right=15] (b6);
\foreach \i/\a in {1/180,2/120,3/60,4/0,5/-60,6/-120} {
  \node[circle,draw,fill=white,inner sep=.9pt] at (b\i) {};
  \node at (\a:1.6) {$\i$};
}
\node at (0,-1.9) {arc diagram};
\node at (2.35,0) {$\longmapsto$};
\begin{scope}[shift={(4.7,0)}]
\foreach \i/\a in {1/180,2/120,3/60,4/0,5/-60,6/-120}
  \coordinate (b\i) at (\a:1.3);
\coordinate (L) at (150:.65);
\coordinate (R) at (30:.65);
\coordinate (S) at (270:.65);
\draw[web]
  (b1)--(L)--(b2)
  (b3)--(R)--(b4)
  (b5)--(S)--(b6);
\draw[web] (L)--(0,0)--(R) (0,0)--(S);
\foreach \v in {L,R,S}
  \node[sink] at (\v) {};
\node[source] at (0,0) {};
\foreach \i/\a in {1/180,2/120,3/60,4/0,5/-60,6/-120} {
  \node[circle,draw,fill=white,inner sep=.9pt] at (b\i) {};
  \node at (\a:1.6) {$\i$};
}
\node at (0,-1.9) {$W_w$};
\end{scope}
\end{tikzpicture}
.
\]
Resolving the two boundary junctions and the crossing gives $W_w$.
\end{example}

\begin{lemma}
The web $W_w$ is non-elliptic.
\end{lemma}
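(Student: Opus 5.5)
We prove that $W_w$ has no circle components, no internal digons and no internal squares.

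\emph{Circles and digons.} Every internal vertex of $W_w$ comes from one of two sources. Either it is a trivalent vertex inserted above a second-row boundary point (one per middle letter), or it is an endpoint of a dumbbell resolving a crossing of two arcs. Every edge therefore lies on a path to the boundary along the original arcs, so there are no closed circles.

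\emph{Crossing structure.} First we record how arcs can meet. The nearest-unused matching makes the arcs of the first matching (A to B) pairwise non-crossing, and likewise for the second matching (B to C). This is the usual Dyck-path argument: a crossing pair would contradict the choice of the nearest unused smaller entry. Hence every crossing is between an AB-arc and a BC-arc. Moreover, two given arcs cross at most once, since they are chords in a disk. So each crossing is a transverse meeting of one AB-chord and one BC-chord, and an AB-chord and a BC-chord sharing the endpoint $b$ do not cross; they meet only at the junction vertex above $b$.

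\emph{Faces.} The key step is to identify the internal faces of $W_w$. Before resolving crossings, take the planar arrangement $\Gamma$ of chords. A bounded internal face of $W_w$ corresponds to a region of $\Gamma$ that does not touch the boundary circle. Such a region is a polygon whose sides alternate between AB-chords and BC-chords. At each corner two chords of different types cross, or the corner is a junction point $b$; a junction sits on the boundary, so it cannot be a corner of an internal region. Replacing each crossing by a dumbbell doubles the corner count, so the face in $W_w$ has $2k$ edges, where $k$ is the number of chord segments bounding the region. A digon would need $k=1$, which is impossible. A square needs $k=2$, i.e.\ a region bounded by exactly two chord segments, which would require two chords to cross twice. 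Since two chords cross at most once, this is also impossible. Hence every internal face has at least six sides.

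\emph{Expected obstacle.} The main difficulty is making the face correspondence rigorous. In particular, the I/H choice of dumbbell must not merge or split faces so as to create a 4-cycle. I would handle this by noting that each dumbbell adds exactly one edge to each of the four faces around the crossing, which gives the count $2k$ directly. I would also check that sides alternate in type, so that $k$ is even and in fact $k\ge 3$ forces $k\ge 4$, giving at least $8$ sides. Only $k\ge3$ is needed for the lemma, so this refinement is not required. Alternatively, the whole lemma can be cited from Khovanov--Kuperberg and Petersen--Pylyavskyy--Rhoades, where this growth algorithm is shown to produce the non-elliptic web inverse to the minimal cut path/tableau bijection \cite{KK,PPR}.
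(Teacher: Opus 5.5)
\textbf{There is a genuine gap: the face count is wrong at two concrete points.}

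\emph{Junction corners.} The vertex inserted above a second-row point $b$ is an internal trivalent vertex, not a boundary point. Consider the region between the AB-arc ending at $b$ and the BC-arc starting at $b$. In $\Gamma$ it touches the boundary only at the point $b$, but once the junction is lifted it becomes an internal face of $W_w$. So junctions can be corners of internal faces.

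\emph{Dumbbell count.} A dumbbell has a single middle edge, and that edge borders only two of the four regions at the crossing; the other two regions meet the dumbbell in one vertex. So a face with $k$ chord sides has $k+e$ edges, where $e$ counts its crossing corners on the middle-edge side. It does not have $2k$ edges.

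Both effects already occur for $w=ABAACBBCC$. The arcs are $(1,2),(4,6),(3,7)$ and $(2,5),(6,9),(7,8)$, with three crossings. Thus $W_w$ is connected with $9$ internal vertices, $9$ boundary vertices and $18$ edges, so it has exactly one internal face. That face has sides on $(4,6),(6,9),(3,7),(2,5)$ and a corner at the junction above $6$. It contains the middle edges of only two of the three dumbbells, so it is a hexagon. Your argument would instead exclude it, or give it $8$ edges. Your refinement that every face has at least $8$ sides is also false: hexagons are the typical faces, as in the hexagon-in-hexagons pattern.

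\textbf{What is still missing after correcting the count.} Alternation of AB- and BC-sides, together with bipartiteness, makes $k$ and $e$ even. Your chord argument, extended to corners at shared endpoints, still excludes $k=2$. But a square could still arise as $k=4$, $e=0$, and nothing in the proposal rules this out.

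This is exactly where the I/H choice has to be used. Tracing orientations along the arcs, bipartiteness forces the middle edge of each dumbbell to separate the $A$-side half of the AB-arc and the $C$-side half of the BC-arc from the two $B$-side halves. Put the boundary on a line and draw the arcs as semicircles, oriented from smaller to larger endpoint. Walking around a face, each side is traversed along or against its arc. This changes at a corner exactly when the corner lies on the middle-edge side, and junction corners never change it. Hence $e=0$ means the face lies inside all of its bounding semicircles or outside all of them. You would still need to show neither is possible for an internal face:
\begin{enumerate}
\item \emph{Outside all:} look at a topmost point of the face, where the face must lie locally inside an arc.
\item \emph{Inside all:} the face is then the upper half of an intersection of disks centred on the line, and this reaches the line.
\end{enumerate}

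\textbf{Comparison with the paper.} The paper avoids all of this. It cites the non-ellipticity of the Khovanov--Kuperberg growth output \cite[Section 5]{KK}, together with Tymoczko's identification of the arc construction with that web \cite[Theorem 4.9]{Tymoczko}. Your fallback citation of \cite{KK,PPR} lacks exactly this identification: \cite{PPR} treats the growth bijection and promotion, not the arc construction that defines $W_w$.
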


\begin{proof}
The KK growth
algorithm constructs this web directly from the word
\cite[Section 5]{KK}. The above description is Tymoczko's
equivalent construction \cite[Sections 2--4, Theorem 4.9]{Tymoczko}.
\end{proof}

\begin{remark}
There is another useful tableau-to-web construction in
\cite[Section 4.1]{MPT}. The algorithm therein is
more general: it allows arbitrary sign strings and semistandard tableaux,
and naturally produces a web together with a flow. In the all-positive
standard case considered here, it recovers the same KK
web. Tymoczko's construction is more convenient for our purposes because
the intermediate arc diagram makes the web easy to draw.
\end{remark}

\subsection{Elementary tensors and decorations}\label{sec:elementarytensors}

We fix the coordinates of the elementary web maps in
\autoref{fig:generators}. Here we draw maps in a rectangle,
with inputs at the bottom and outputs at the top.

\begin{figure}[!ht]
\centering
\begin{tikzpicture}[x=1cm,y=.8cm,font=\small]
\foreach \x in {8.6,11.1,13.6} {
  \begin{scope}[shift={(\x,0)}]
  \draw[line width=.45pt]
    (-.85,1.6)--(-1,1.6)--(-1,-2.3)--(-.85,-2.3)
    (.85,1.6)--(1,1.6)--(1,-2.3)--(.85,-2.3);
  \end{scope}
}
\draw[web] (-.55,1)--(0,0)--(.55,1) (0,0)--(0,-1);
\node[sink] at (0,0) {};
\node[above] at (-.55,1) {$+$};
\node[above] at (.55,1) {$+$};
\node[below] at (0,-1) {$-$};
\node at (0,-1.9) {$y$};
\begin{scope}[shift={(2.1,0)}]
\draw[web] (-.55,-1)--(0,0)--(.55,-1) (0,0)--(0,1);
\node[source] at (0,0) {};
\node[above] at (0,1) {$-$};
\node[below] at (-.55,-1) {$+$};
\node[below] at (.55,-1) {$+$};
\node at (0,-1.9) {$\lambda$};
\end{scope}
\begin{scope}[shift={(4.2,0)}]
\draw[web] (-.55,1)
  .. controls (-.55,-.65) and (.55,-.65) .. (.55,1);
\node[above] at (-.55,1) {$+$};
\node[above] at (.55,1) {$-$};
\node at (0,-1.9) {$\sigma'$};
\end{scope}
\begin{scope}[shift={(6.3,0)}]
\draw[web] (-.55,-1)
  .. controls (-.55,.65) and (.55,.65) .. (.55,-1);
\node[below] at (-.55,-1) {$-$};
\node[below] at (.55,-1) {$+$};
\node at (0,-1.9) {$\sigma$};
\end{scope}
\begin{scope}[shift={(8.6,0)}]
\draw[web] (-.5,1)--(-.5,0)--(-.5,-1)
  (.5,1)--(.5,0)--(.5,-1) (-.5,0)--(.5,0);
\node[sink] at (-.5,0) {};
\node[source] at (.5,0) {};
\node[above] at (-.5,1) {$+$};
\node[above] at (.5,1) {$-$};
\node[below] at (-.5,-1) {$-$};
\node[below] at (.5,-1) {$+$};
\node at (0,-1.9) {$H$};
\end{scope}
\begin{scope}[shift={(11.1,0)}]
\draw[web] (-.6,1)--(0,-.35)--(.6,1) (0,-.35)--(0,1);
\node[sink] at (0,-.35) {};
\foreach \x in {-.6,0,.6}
  \node[above] at (\x,1) {$+$};
\node at (0,-1.9) {$\tau$};
\end{scope}
\begin{scope}[shift={(13.6,0)}]
\draw[web] (-.6,-1)--(0,.35)--(.6,-1) (0,.35)--(0,-1);
\node[source] at (0,.35) {};
\foreach \x in {-.6,0,.6}
  \node[below] at (\x,-1) {$+$};
\node at (0,-1.9) {$\eta$};
\end{scope}
\end{tikzpicture}
\caption{The generators $y,\lambda,\sigma',\sigma$ are the split,
merge, cup and cap. The derived maps $H,\tau,\eta$ are bracketed;
$\tau$ is the tripod and $\eta$ its evaluation. Inputs are at the
bottom and outputs at the top; $+,-$ specify modules. Reversing
all signs and vertex types gives the other sign patterns.}
\label{fig:generators}
\end{figure}

Write $e_A,e_B,e_C$ for the standard basis vectors of either
signed module $V^\pm$, labeled by $1,0,-1$, respectively.
Their weights are $\varepsilon_1,\varepsilon_2,\varepsilon_3$
in $V^+$ and $-\varepsilon_3,-\varepsilon_2,-\varepsilon_1$ in $V^-$.
Here the $\varepsilon_i$ are the coordinate weights on the diagonal
traceless Cartan subalgebra, so $\varepsilon_1+\varepsilon_2+\varepsilon_3=0$.
For a word $w=w_1\cdots w_n$ in $\{A,B,C\}$, put
$e_w=e_{w_1}\otimes\cdots\otimes e_{w_n}$.
We often abbreviate $e_A,e_B,e_C$ to $A,B,C$ and $e_w$ to $w$,
with the signed tensor factors understood.
For example, $AB=e_A\otimes e_B$.
Comparisons use the order $A<B<C$, and $\delta_{a<b}$ is an indicator.
The notation $[e_w]f$ denotes the coefficient of $e_w$ in $f$.

The cup and cap are
$\sigma':\one\to V^+\otimes V^-$ and
$\sigma:V^-\otimes V^+\to\one$.
The trivalent maps and the cup and cap are given in coordinates
by the following formulas. Here $\nu(a,b)=\nu(ab)$ is the label assigned to the unordered pair $\{a,b\}$ below.
\begin{equation}\label{eq:localmaps}
\begin{gathered}
y(A)=AB+\tpar BA,\qquad y(B)=AC+\tpar CA,\qquad y(C)=BC+\tpar CB,\\
\lambda(ab)=\tpar^{-\delta_{a<b}}\nu(a,b)\quad(a\ne b),
\qquad \lambda(aa)=0,\\
\nu(AB)=A,\qquad \nu(AC)=B,\qquad \nu(BC)=C,
\qquad \nu(a,b)=\nu(b,a),\\[2pt]
\sigma'(1)=AC+\tpar BB+\tpar^2CA,\qquad
\sigma(AC)=\tpar^{-2},\quad \sigma(BB)=\tpar^{-1},\quad \sigma(CA)=1,\\
\sigma(s)=0\quad(s\notin\{AC,BB,CA\}).
\end{gathered}
\end{equation}
The same formulas apply after globally reversing the signs and vertex
types. The map $H$ exchanges opposite boundary signs by a planar web.

\begin{remark}
The formulas in \autoref{eq:localmaps} are as in
\cite[Section 2]{KK}, with $\tpar=-q_{\rm KK}^{-1/2}$.
\end{remark}

For the derived maps in \autoref{fig:generators}, put
$H=(I\otimes\lambda)(y\otimes I)$,
$\tau=(I\otimes y)\sigma'$ and $\eta=\sigma(\lambda\otimes I)$.
Their coordinates are
\[
\begin{array}{c|ccccccccc}
s&AA&AB&AC&BA&BB&BC&CA&CB&CC\\\hline
H(s)&AA&BA&\tpar^{-1} AC+BB&AB&AC+CA&CB&BB+\tpar CA&BC&CC
\end{array},
\]
while the nonzero coordinates of $\tau,\eta$, ordered as $ABC,ACB,BAC,BCA,CAB,CBA$, are
\begin{equation}\label{eq:tripod}
\tau=(1,\tpar,\tpar,\tpar^2,\tpar^2,\tpar^3)^{\mathsf T},\qquad
\eta=(\tpar^{-3},\tpar^{-2},\tpar^{-2},\tpar^{-1},\tpar^{-1},1).
\end{equation}
In particular,
$\tau^{\mathsf T}=\tpar^3\eta$ and $\eta\tau=de$.
Transpose throughout means ordinary coordinate transpose over $\F$, and we write $\tau^j$ for the tensor
product of $j$ tripods.

\begin{remark}
Fix the states on the boundary edges of a web.
A compatible decoration labels each internal edge by $A,B$ or $C$
so that the corresponding local coefficients in
\autoref{eq:localmaps} are nonzero.
Its weight is the product of these coefficients.
The coefficient for the fixed boundary states is the sum of these
weights over all compatible decorations.
\end{remark}

\subsection{The dual canonical condition}

We first recall the bar involution. For a sign word
$S=s_1\cdots s_n$, write
$V^S=V^{s_1}\otimes\cdots\otimes V^{s_n}$.
On $V^+$ and $V^-$, the bar fixes the standard basis vectors and sends
$\tpar$ to $\tpar^{-1}$. On tensor products it is not the entrywise bar.
Instead, Lusztig's tensor bar
$\Phi_S\colon V^S\longrightarrow V^S$
is defined inductively using the quasi $R$-matrix $\overline\Theta$: if
$S=TT'$, then
$\Phi_S(x\otimes y)
=
\overline\Theta\bigl(\Phi_T(x)\otimes\Phi_{T'}(y)\bigr)$.
Thus $\Phi_S$ is $\tpar$-antilinear and is the bar involution relevant for
the dual canonical basis. We will use \autoref{lem:web-bar} to establish
bar invariance without computing the tensor bar explicitly. For its
construction, see \cite[Chapter 27]{Lusztig} and
\cite[Section 6]{KK}.

\begin{lemma}\label{lem:web-bar}
Every web (without linear combination) is bar equivariant. More precisely, if a web represents
a map
$W\colon V^S\longrightarrow V^{S'}$,
then
$\Phi_{S'}\circ W=W\circ\Phi_S$.
\end{lemma}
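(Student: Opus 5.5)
The statement is local in the web, so I will reduce it to the generators. Every web is a vertical composite of horizontal juxtapositions of the elementary maps $y,\lambda,\sigma',\sigma$ and identities, with the possible addition of closed circles. A circle evaluates to the bar-invariant scalar $e$. Hence it suffices to prove two things: that equivariance is stable under composition and under tensor product, and that each generator is equivariant.

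Composition is immediate: if $\Phi_{S'}W=W\Phi_S$ and $\Phi_{S''}W'=W'\Phi_{S'}$, then $\Phi_{S''}W'W=W'W\Phi_S$. Tensor product is where the quasi $R$-matrix enters. Write $\Phi_{TT'}=\overline\Theta\circ(\Phi_T\otimes\Phi_{T'})$. For $W\colon V^T\to V^U$ and $W'\colon V^{T'}\to V^{U'}$, both $U_{\vpar}(\mathfrak{sl}_3)$-linear and equivariant, we get
\[
\Phi_{UU'}(W\otimes W')=\overline\Theta\,(\Phi_UW\otimes\Phi_{U'}W')=\overline\Theta\,(W\otimes W')(\Phi_T\otimes\Phi_{T'}).
\]
Since $\overline\Theta$ is an element of a completion of $U^-\otimes U^+$ and $W\otimes W'$ commutes with the action of $U\otimes U$, the map $\overline\Theta$ commutes past $W\otimes W'$. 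This yields $(W\otimes W')\Phi_{TT'}$.

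The remaining issue is that $\Phi_S$ depends on a choice of bracketing of $S$. I will invoke the standard fact that the tensor bar is independent of bracketing, by the cocycle identity for $\Theta$ \cite[Chapter 27]{Lusztig}. With this, the inductive definition is compatible with any splitting used above.

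For the generators, each of $y,\lambda,\sigma,\sigma'$ is a $U_{\vpar}$-module map between small tensor products. It therefore suffices to check equivariance on a single cyclic generating vector and then use intertwining together with the antilinearity $\Phi(u\,x)=\bar u\,\Phi(x)$. I will take $\sigma'(1)$ and check $\Phi_{+-}(\sigma'(1))=\sigma'(1)$, where on $\one$ the bar is the identity. Equivalently, $\sigma'(1)$ is the canonical invariant vector. The case $\lambda\colon V^+\otimes V^+\to V^-$ is handled similarly, and I expect its lowest weight piece to be the cleanest test: $\lambda(CB)=C$ and $\lambda(BC)=\tpar^{-1}C$, so one verifies the image of $\Phi(BC)$. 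The cases $y$ and $\sigma$ should follow by duality and rotation, which amounts to composing with cups and caps already shown equivariant, or can be checked directly in the same way.

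The main obstacle is these explicit generator checks. They require the precise normalization of $\overline\Theta$ in the conventions of \cite{KK} with $\tpar=-q_{\rm KK}^{-1/2}$, and the sign and power conventions must match \autoref{eq:localmaps}. I would simply cite \cite[Section 6]{KK}, where this bar-equivariance of webs is established, and present the argument above as the reduction.
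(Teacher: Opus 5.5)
Your proposal is correct and is essentially the paper's argument: the paper simply invokes the proof of \cite[Proposition 2]{KK}, which establishes equivariance for arbitrary monomial web maps, and you likewise defer the explicit generator checks to \cite{KK}. Your reduction steps are sound and only make the structure behind that citation explicit: closure under composition, moving $\overline\Theta$ past $W\otimes W'$ by $U$-linearity, and bracketing independence of the tensor bar.
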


\begin{proof}
This is proved for arbitrary monomial web maps in the proof of
\cite[Proposition 2]{KK}.
\end{proof}

The point of \autoref{lem:web-bar} is that bar invariance does not
distinguish the web basis from the dual canonical basis. The difference
is the triangularity condition.

\begin{proposition}\label{thm:canonical}
For every sign word $S$ of length $n$ and every state word
$w\in\{A,B,C\}^n$, there is a unique tensor $b_w\in V^S$
(the dual canonical basis element) fixed by $\Phi_S$ and satisfying
\[
b_w\equiv e_w\pmod{\tpar\bigoplus_z R e_z}.
\]
For $S=+^{3N}$, the vectors $b_w$ indexed by dominant words of
content $(N,N,N)$ form a basis of $\Inv(V^{\otimes3N})$.
\end{proposition}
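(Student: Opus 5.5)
This is the standard existence and uniqueness statement for Lusztig's (dual) canonical basis of a tensor product of based modules. I would prove it by Lusztig's unitriangularity lemma, applied to the standard tensor basis $\{e_z\}$ of $V^S$ with a suitable partial order, and then use the web basis to locate the invariant part.

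\textbf{Triangularity of $\Phi_S$.} I first show that $\Phi_S(e_w)\in e_w+\sum_{z\prec w}\F e_z$, with coefficients in $\Z[\tpar,\tpar^{-1}]$. Here $\prec$ is the partial order on states of fixed weight generated by the transfers that $\overline\Theta$ produces. This follows by induction on $n$ from the inductive definition $\Phi_S(x\otimes y)=\overline\Theta(\Phi_T(x)\otimes\Phi_{T'}(y))$, because $\overline\Theta=1\otimes1+\sum(\text{terms in }U^-_{>0}\otimes U^+_{>0})$. The positive-degree terms strictly move weight from the left factor to the right, which is a strict lowering in a dominance-type order on the sequence of partial weights. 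Since $\Phi_S$ is an antilinear involution (cf.\ \cite[Chapter 27]{Lusztig}), the matrix $(r_{zw})$ of $\Phi_S$ in the basis $\{e_z\}$ satisfies $\sum_z \overline{r_{uz}}\,r_{zw}=\delta_{uw}$.

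\textbf{Existence and uniqueness.} I would then run the standard Lusztig lemma argument, by induction along $\prec$ within the finite weight space of $e_w$. Write $b_w=e_w+\sum_{z\prec w}p_{zw}e_z$ with $p_{zw}\in\tpar R$. Bar invariance becomes the system $\overline{p_{uw}}-p_{uw}=\sum_{u\prec z\preceq w} r_{uz}\overline{p_{zw}}$. The right-hand side is known by induction on $u$ (going downward), and it is anti-invariant under $\tpar\mapsto\tpar^{-1}$ by the involution identity. Such a Laurent polynomial splits uniquely as $f-\bar f$ with $f\in\tpar\Z[\tpar]$, which determines $p_{uw}$ uniquely and gives existence. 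Uniqueness also follows directly: the difference of two candidates lies in $\tpar\bigoplus_z Re_z$ and is bar invariant. Its $\preceq$-maximal coefficient $c$ would then satisfy $\bar c=c$ with $c\in\tpar\Z[\tpar]$, forcing $c=0$.

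\textbf{The basis statement.} For $S=+^{3N}$, $\Phi_S$ commutes with the action of $U_{\vpar}(\mathfrak{sl}_3)$ up to the bar on scalars, so it preserves $\Inv(V^{\otimes 3N})$. By \autoref{lem:web-bar} each non-elliptic web $W_w$ is $\Phi_S$-fixed. By the KK leading-term property \cite[Section 5]{KK}, $W_w=e_w+{}$(terms in $e_z$ with $z\prec w$), where the coefficients are Laurent polynomials in $\tpar$. Applying the same inductive correction inside the span of the webs, by subtracting bar-invariant combinations $\tpar$-symmetric multiples of lower $W_z$, produces a bar-invariant invariant tensor congruent to $e_w$ mod $\tpar R$. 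By uniqueness this tensor is $b_w$. Unitriangularity against the web basis of \autoref{thm:spider} then shows that these $b_w$ form a basis.

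\textbf{Main obstacle.} I expect the delicate point to be this last step. The corrections among webs must stay in $\tpar R$ on \emph{all} coordinates $e_z$, including non-dominant $z$. This needs the fact that for every web the non-leading coordinates are $\prec$-lower than the leading one, together with the standard bound that the lower coordinates can be corrected. I would cite \cite{FKK} and \cite[Section 6]{KK} for this compatibility, or equivalently the identification of $\Inv$ with the dual canonical basis of a based submodule \cite[Chapter 27]{Lusztig}, rather than reprove it.
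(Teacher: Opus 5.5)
Your existence and uniqueness argument is correct. It is Lusztig's unitriangularity lemma carried out by hand, whereas the paper simply cites \cite[Chapter 27]{Lusztig} with the conventions of \cite[Theorem 3]{KK}. Your version has the advantage of working for any strict order in which $\Phi_S$ is unitriangular over $\Z[\tpar,\tpar^{-1}]$, so the choice of coproduct plays no role. There is one small sign slip: bar invariance gives $p_{uw}-\overline{p_{uw}}=\sum_{u\prec z\preceq w}r_{uz}\overline{p_{zw}}$, the opposite sign to yours, which does not affect the splitting argument.

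The web-correction sketch for the basis statement, however, is circular. The missing ingredient is not a degree bound or a triangularity property of webs. Subtracting bar-invariant multiples of lower webs $W_z$ can only be steered by the coordinates at dominant labels $z\in\mathcal D_N$. Nothing in the procedure controls the nondominant coordinates, and these genuinely carry bad terms in raw webs (see the state $BAABACCBC(ABC)^7$ in \autoref{sec:basischange}). That such corrections nevertheless suffice is \autoref{lem:elimination}. Its proof expands $W_w$ in the basis $\{b_z\mid z\in\mathcal D_N\}$ of $\Inv(V^{\otimes3N})$, i.e.\ it uses the very assertion you want to prove. What is really needed is that the $b_w$ with $w\in\mathcal D_N$ lie in $\Inv(V^{\otimes3N})$. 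This is precisely the cited input \cite[Section 27.2.5]{Lusztig}, recalled in \cite[Section 6]{KK}. So your fallback citation is exactly the paper's proof, and the web construction should be dropped from this step. Webs can enter legitimately only afterwards. Take the cited result in the form that $\Inv(V^{\otimes3N})$ is spanned by the $b_u$ it contains. Then compare leading terms in the order $\prec$ of \autoref{sec:basischange}, in which both the $b_u$ and the webs of \cite[Theorem 2]{KK} are unitriangular. This identifies that set of labels $u$ as exactly $\mathcal D_N$.
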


\begin{proof}
The canonical basis input is Lusztig's tensor product construction
\cite[Chapter 27]{Lusztig}.
In the exact conventions used here, see
\cite[Theorem 3]{KK}. The invariant subspace assertion is
\cite[Section 27.2.5]{Lusztig}, recalled in \cite[Section 6]{KK}.
\end{proof}

\begin{example}\label{ex:six}
There are five dominant words for $(2,2,2)$:
\[
w_1=\word{AABBCC},\quad w_2=\word{AABCBC},\quad
w_3=\word{ABABCC},\quad w_4=\word{ABACBC},\quad w_5=\word{ABCABC}.
\]
Their five webs are shown in \autoref{fig:six}. Columns in the following
matrix are webs, and rows are the selected standard tensor states $e_{w_i}$:
\begin{equation}\label{eq:sixcoordinates}
\bigl([e_{w_i}]W_{w_j}\bigr)_{i,j}=
\begin{pmatrix}
1&0&0&0&0\\
\tpar&1&0&0&0\\
\tpar&0&1&0&0\\
\tpar^2&\tpar&\tpar&1&0\\
\tpar&\tpar^2&\tpar^2&\tpar&1
\end{pmatrix}.
\end{equation}
For example, $[e_{ABCABC}]W_{AABBCC}=\tpar$, while
$[e_{ABCABC}]W_{ABCABC}=1$. The entries come from the elementary
tables above. All five webs are dual canonical, also by \cite[Theorem 1]{KK}:
in their full tensor
expansions the leading coordinate is $1$ and every other coordinate
belongs to $\tpar\Z[\tpar]$. Thus \autoref{eq:sixcoordinates} is a tensor-coordinate
matrix, even though the web-to-dual-canonical matrix here is the identity.
\end{example}

\begin{figure}[!ht]
\centering
\includegraphics[width=\linewidth]{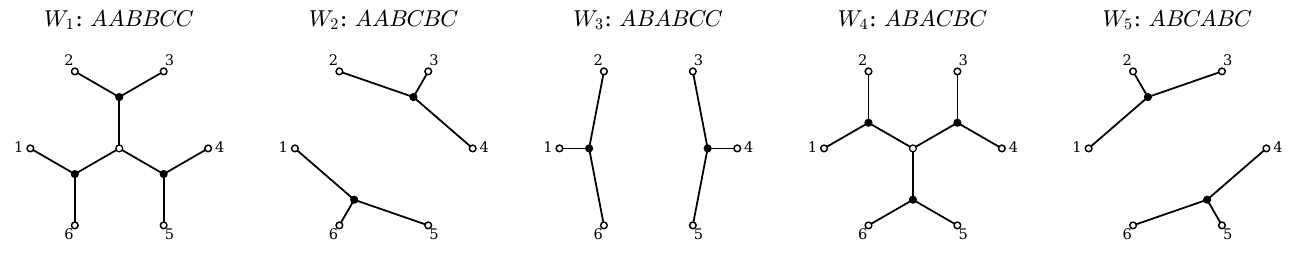}
\caption{The five all-positive six-point webs, in the order of
\autoref{ex:six}. The boundary order starts at the left and goes clockwise.}
\label{fig:six}
\end{figure}

\subsection{A teaser for our main family}

\autoref{fig:family01} and \autoref{fig:family24} show
$W_{a_m}$ and $W_{c_m}$ for $m\in\{0,1,2,4\}$. These webs are
non-elliptic, but $W_{c_m}$ need not be dual canonical. By
\autoref{thm:main} and \autoref{lem:lengths}, the corresponding
$\mu$-coefficients are $1,0,2,4$, respectively.

\begin{figure}[!ht]
\centering
\includegraphics[width=\linewidth]{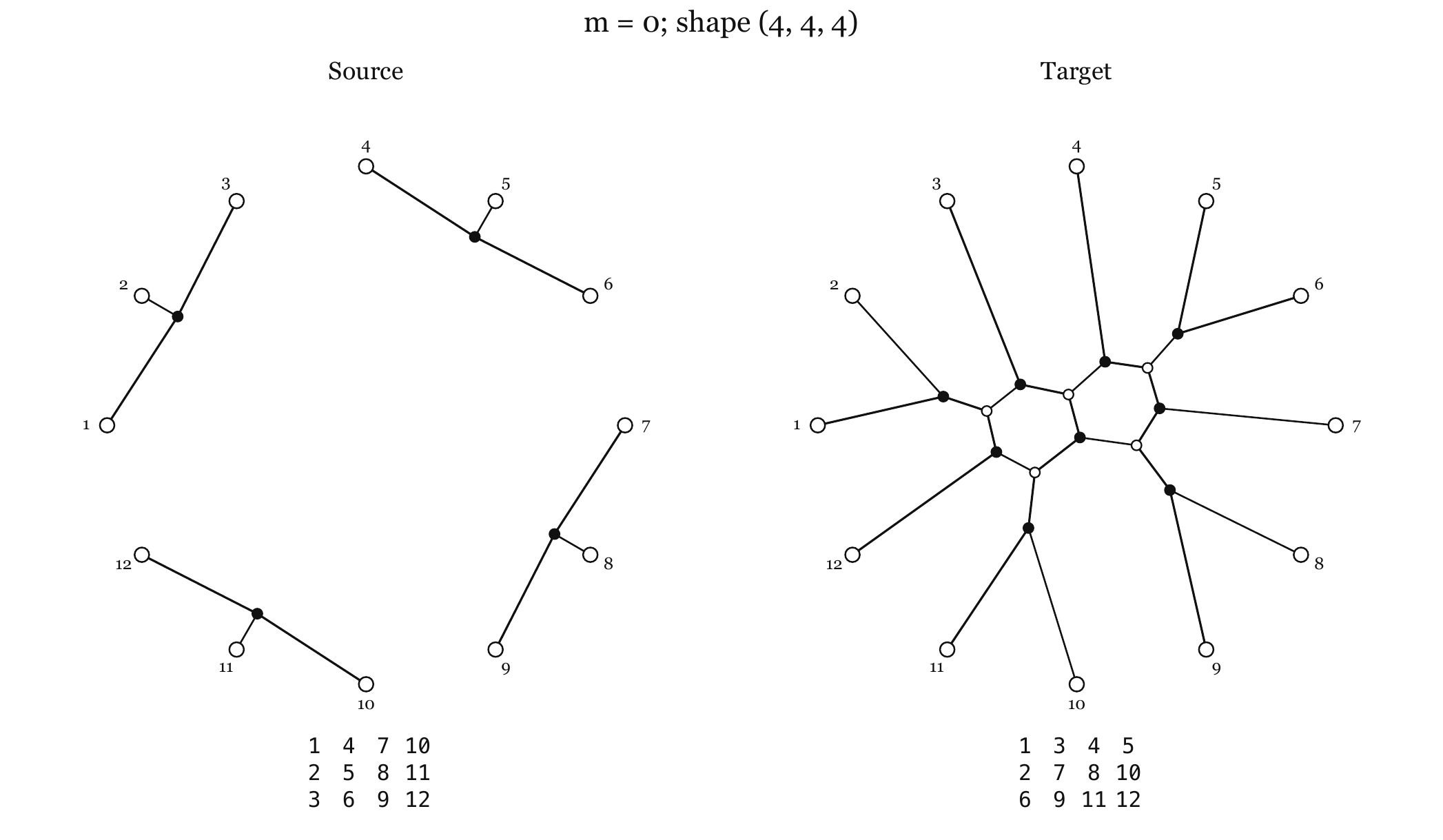}\par\medskip
\includegraphics[width=\linewidth]{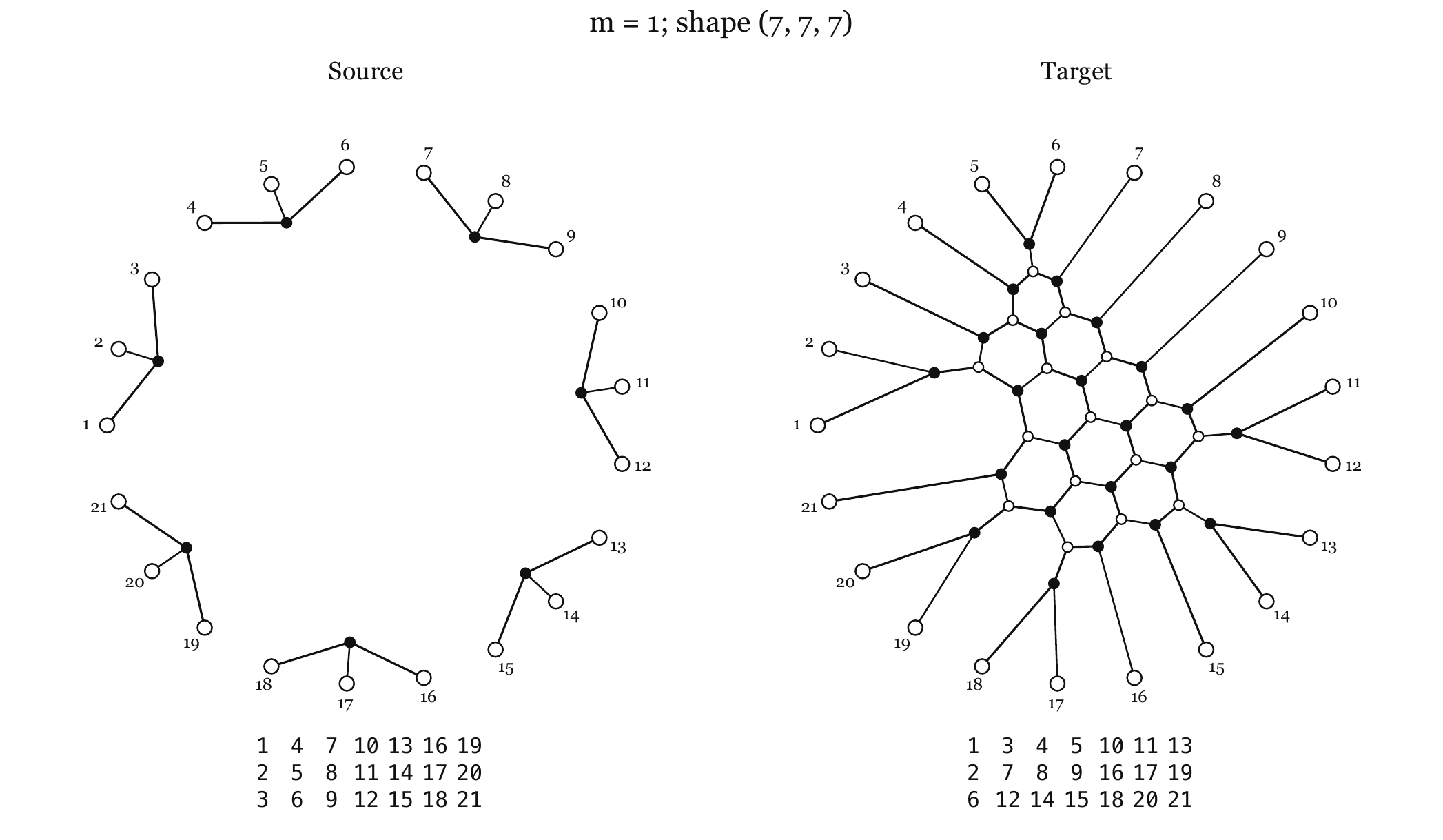}
\caption{The indexing webs for $m=0$ and $m=1$, with tableaux below
each diagram. Boundary labels are fixed; edges join opposite vertex
types.}
\label{fig:family01}
\end{figure}

\begin{figure}[!ht]
\centering
\includegraphics[width=\linewidth]{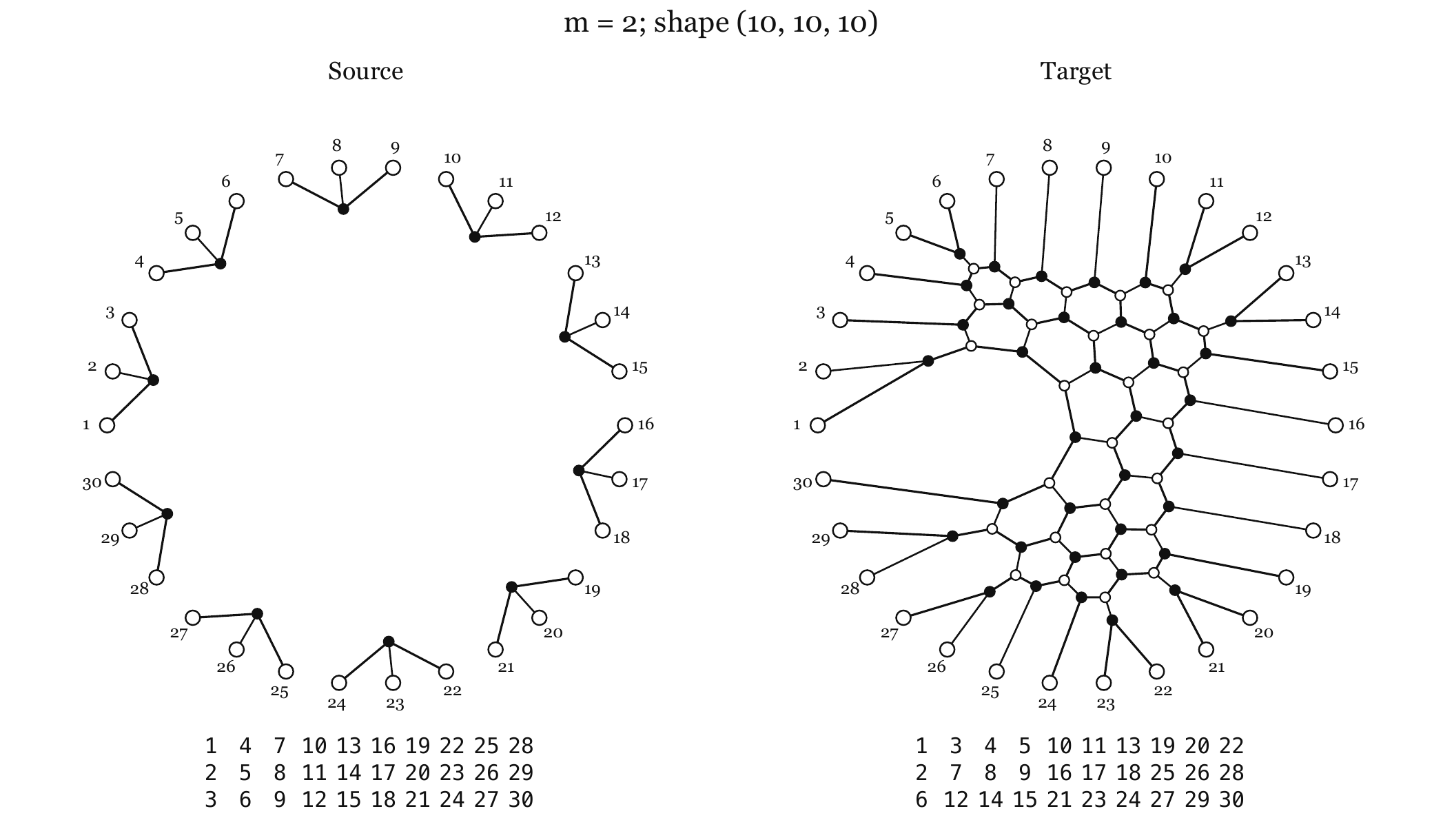}\par\medskip
\includegraphics[width=\linewidth]{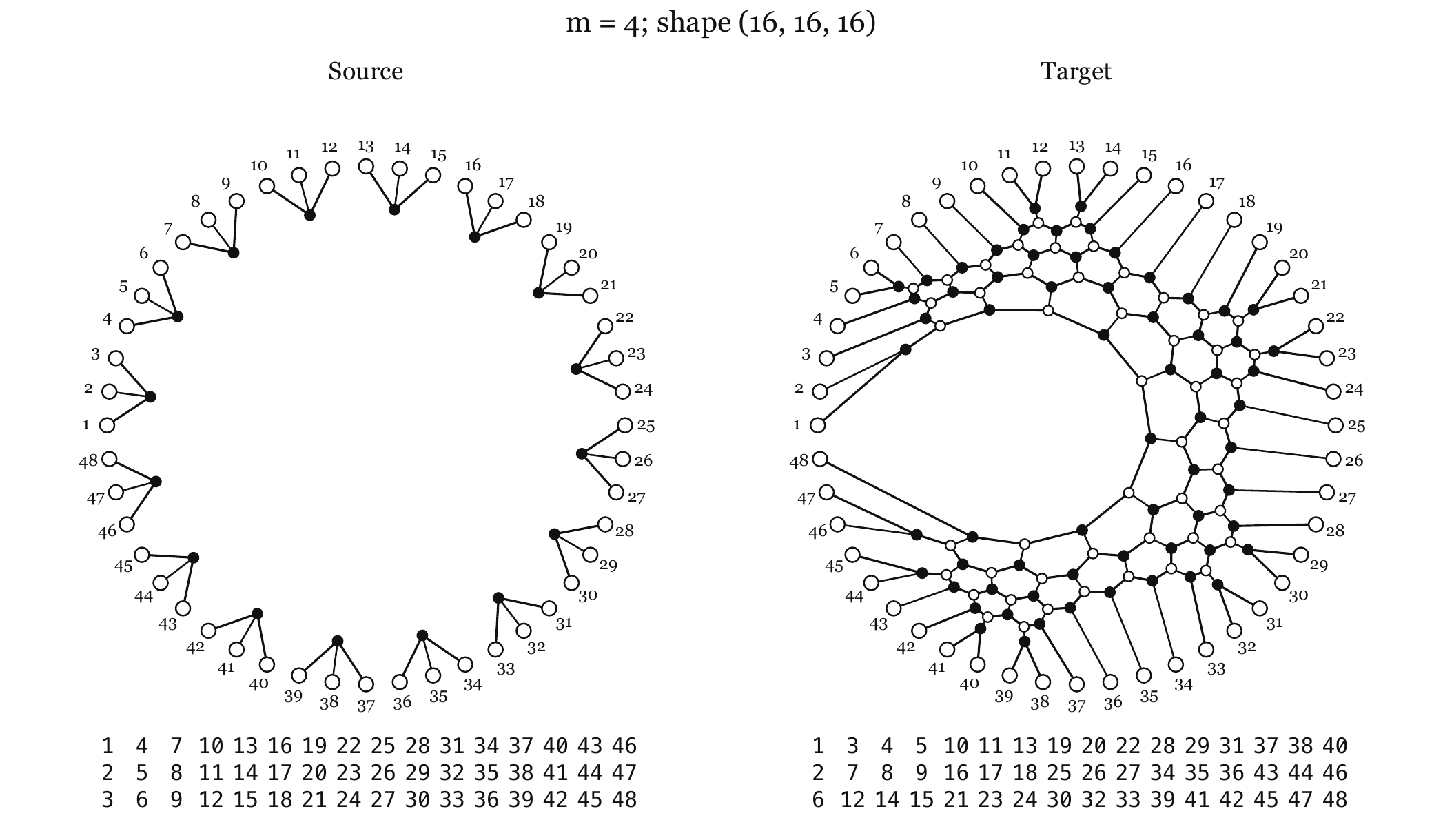}
\caption{The members $m=2$ and $m=4$, of shapes $(10,10,10)$ and
$(16,16,16)$.}
\label{fig:family24}
\end{figure}

\section{The coefficient to be computed}\label{sec:dictionary}

\addtocounter{subsection}{1}

The following statement reduces the calculation of
$\mu(x_m,y_m)$ to a single tensor coordinate.

\begin{proposition}\label{thm:FKK}
For every even $m\ge0$,
$\mu(x_m,y_m)=[\tpar]([e_{a_m}]b_{c_m})$.
\end{proposition}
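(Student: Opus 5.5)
We will use the Frenkel--Khovanov--Kirillov identification of tensor coordinates of dual canonical vectors with parabolic KL polynomials \cite{FKK}. In the invariant space $\Inv(V^{\otimes 3N})$, the coordinate $[e_{a}]b_{c}$ of a dual canonical vector should be, up to a power of $\tpar$, a KL polynomial attached to the pair of permutations $\pi(a),\pi(c)$. The relevant permutations lie in the single left cell of \autoref{lem:rectangular-cell}, and $\pi(a_m)=x_m$ is its minimal element by \autoref{rem:minimal-cell}. The plan is as follows.

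\textbf{Step 1: identify the coordinate with a normalized KL polynomial.} We recall the FKK statement in the conventions of \autoref{eq:localmaps}, where $\tpar=-q_{\rm KK}^{-1/2}$. It says that for dominant words $a,c$ of content $(N,N,N)$,
\[
[e_a]b_c=\tpar^{\ell(\pi(c))-\ell(\pi(a))}P_{\pi(a),\pi(c)}(\tpar^{-2}),
\]
or the equivalent form with $\tpar\mapsto\tpar^{-1}$ and a shift chosen to match the normalization $b_c\equiv e_c \pmod{\tpar R}$ of \autoref{thm:canonical}. The identification holds in general as a parabolic KL polynomial. To reduce it to an ordinary one, we observe that states of content $(N,N,N)$ correspond to minimal coset representatives. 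For a rectangle, the maps $w\mapsto \pi(w)$ of \autoref{def:words} realize these representatives, possibly after composing with the longest element, which is why the $x,y$ roles may be inverted. We would check the sign and orientation conventions on \autoref{ex:six}: the tensor-coordinate matrix \autoref{eq:sixcoordinates}, with entries $\tpar^{\ell(y)-\ell(x)}$, should equal the KL matrix for the $(2,2,2)$ cell, where all $P=1$.

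\textbf{Step 2: use the parabolic-versus-ordinary comparison.} For three-row rectangles, permutations of the form $\pi(w)$ are obtained from $x_m$ by left multiplication with length additivity (\autoref{rem:minimal-cell}). The parabolic KL polynomial of the relevant (anti)spherical type then agrees with the ordinary $P_{x_m,y_m}$. Alternatively, FKK directly phrase the result with ordinary KL polynomials for pairs of such maximal or minimal coset representatives. We will cite this rather than reprove it.

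\textbf{Step 3: read off the degree.} With $\delta_m=\ell(y_m)-\ell(x_m)=17m+11$, which is odd for even $m$, the allowed top degree of $P_{x_m,y_m}(\qpar)$ is $(\delta_m-1)/2$. Under the normalization $\tpar^{\delta_m}\qpar^{-k}$ with $\qpar=\tpar^2$, the term $\qpar^{(\delta_m-1)/2}$ becomes $\tpar^{1}$. The degree bound $\deg P\le(\delta_m-1)/2$ guarantees that no other monomial lands on $\tpar^1$. Hence the coefficient of $\tpar$ in $[e_{a_m}]b_{c_m}$ is exactly $\mu(x_m,y_m)$. For odd $m$ the statement is vacuous.

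\textbf{Main obstacle.} The hard part is the bookkeeping of conventions. We must match the KK quantum parameter and tensor bar with FKK's, and the ordering of tensor factors together with the letter-to-row assignment of \autoref{def:words} with FKK's coset representatives. This includes possible inversions $w\mapsto w^{-1}$ and conjugation by the longest element. Our strategy is to fix conventions using the small case $(2,2,2)$ and, if needed, a $(3,3,3)$ example with a known nontrivial KL polynomial. This pins down any global power of $\tpar$ or inversion. We would also use that $\mu$ is invariant under $(x,y)\mapsto(x^{-1},y^{-1})$ and $(w_0yw_0,w_0xw_0)$-type symmetries, so a mismatch of that kind does not affect the conclusion.
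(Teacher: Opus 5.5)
\textbf{Gap: Step 2 claims the parabolic polynomial equals the ordinary one, and this is false.} This is exactly where the real content of the proposition lies.

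In these conventions, FKK give $[e_{a_m}]b_{c_m}=\tpar^{\delta_m}P^{J,+}_{x_m,y_m}(\tpar^{-2})$. Here $W_J=S_N\times S_N\times S_N$, and $P^{J,+}$ is Deodhar's alternating parabolic polynomial
\[
P^{J,+}_{x,y}=\sum_{h\in W_J,\,xh\le y}(-1)^{\ell(h)}P_{xh,y}.
\]
This is \emph{not} the ordinary $P_{x_m,y_m}$. Length additivity under left multiplication (\autoref{rem:minimal-cell}) does not make the $h\neq1$ terms disappear.

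The paper's own $(2,2,2)$ case shows this, with $x$ the minimal element of its cell. For $a=ABCABC$ and $c=AABBCC$ one has $\ell(y)-\ell(x)=3$ and $P_{x,y}=1+\qpar$, but $P^{J,+}_{x,y}=\qpar$. This matches $[e_a]b_c=\tpar$, not $\tpar^3+\tpar$. So your proposed calibration (``entries $\tpar^{\ell(y)-\ell(x)}$, all $P=1$'') would fail. Your Step 1 formula with an ordinary KL polynomial is also wrong as stated.

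Citing an ``ordinary'' version of FKK does not rescue this. Deodhar's other parabolic polynomial equals $P_{xw_J,yw_J}$ for the maximal representatives. Its critical coefficient is $\mu(x_mw_J,y_mw_J)$, which concerns a different pair. It is also not the module matching these tensor conventions.

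\textbf{Missing idea: only the critical coefficient survives the passage from parabolic to ordinary.} This needs a short degree argument:
\begin{enumerate}
\item Since $x=x_m$ is the minimal representative of $xW_J$, every $h\in W_J$ satisfies $\ell(xh)=\ell(x)+\ell(h)$.
\item For $h\neq1$ one has $xh\neq y$, because $y$ is a different minimal representative. Hence $xh<y$.
\item The ordinary KL bound then gives $\deg P_{xh,y}\le\lfloor(\delta-\ell(h)-1)/2\rfloor\le(\delta-3)/2$ for odd $\delta$.
\item So only $h=1$ contributes in degree $(\delta-1)/2$, and that coefficient is $\mu(x,y)$.
\item Under $\qpar^k\mapsto\tpar^{\delta-2k}$, this is the coefficient of $\tpar$.
\end{enumerate}

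In your Step 3 the degree bound is invoked where it does nothing. The map $k\mapsto\delta-2k$ is injective, so for a single polynomial no other monomial could land on $\tpar$ anyway. The bound's actual role is to kill the $h\neq1$ summands.
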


\begin{proof}
Put $N=N_m$, $x=x_m$, $y=y_m$, and
$\delta=\ell(y)-\ell(x)$.
The permutations $x,y$ are minimal length representatives of
the cosets $xW_J,yW_J$ in $S_{3N}/W_J$, where
$W_J=S_N\times S_N\times S_N$ and $J$ omits the simple
reflections in positions $N,2N$.

Use Deodhar's parabolic polynomial \cite{Deodhar} in the
alternating convention of \cite[(1.6)]{FKK}:
\begin{equation}\label{eq:parabolic}
P^{J,+}_{x,y}(\qpar)
=\sum_{h\in W_J,\,xh\le y}(-1)^{\ell(h)}P_{xh,y}(\qpar).
\end{equation}
The FKK identification \cite[Theorem $2.5'$]{FKK} gives
\begin{equation}\label{eq:FKK}
[e_{a_m}]b_{c_m}
=\tpar^{\delta_m}P^{J,+}_{x_m,y_m}(\tpar^{-2}).
\end{equation}
To check the conventions, write FKK's quantum parameter as $z$
and their polynomial variable as $Q=z^{-2}$, and set $z=-\tpar^{-1}$.
We use their module with $u=Q$, which gives the alternating
sum in \autoref{eq:parabolic}.
The bar in their equation (1.4) replaces $Q$ by
$Q^{-1}=\tpar^{-2}$, while $(-z)^{-\delta}=\tpar^{\delta}$.
Their map (2.16) identifies the standard parabolic basis with
the standard tensors without an additional sign, and (2.17)
fixes the dual canonical normalization.
Here $A,B,C$ correspond to their
$\mathbf e_1,\mathbf e_2,\mathbf e_3$.
Their initial word $C^NB^NA^N$ gives our representatives
$\pi(w)$. The coproduct in \cite[Section 2]{KK} is FKK's
second coproduct (2.12), and their tensor bar $\psi'$ is our $\Phi$.

By \autoref{lem:lengths}, $\delta=17m+11$ is odd and at least $11$.
For $h\ne1$ appearing in \autoref{eq:parabolic}, minimality
gives $\ell(xh)=\ell(x)+\ell(h)$. Also $xh\ne y$, since $x,y$ are
distinct minimal representatives, so $xh<y$.
The ordinary KL degree bound, together with $\ell(h)\ge1$
and the oddness of $\delta$, therefore gives
\[
\deg P_{xh,y}\le
\left\lfloor\tfrac{\delta-\ell(h)-1}{2}\right\rfloor
\le\tfrac{\delta-3}{2}.
\]
Thus, only $h=1$ contributes to degree $(\delta-1)/2$ in
\autoref{eq:parabolic}, and this coefficient is $\mu(x,y)$.
Under \autoref{eq:FKK}, a term $\qpar^k$ becomes $\tpar^{\delta-2k}$.
Thus, the coefficient of $\tpar$ is precisely this critical coefficient.
\end{proof}

The remaining sections use the change of basis from webs to the
dual canonical basis to construct $b_{c_m}$ and compute $\mu(x_m,y_m)$.

\begin{example}
For the six-point example in \autoref{ex:six}, take
$a=ABCABC$, $c=AABBCC$. Then
$x=\pi(a)=(3,6,2,5,1,4)$, $y=\pi(c)=(5,6,3,4,1,2)$ and
$\ell(y)-\ell(x)=3$. Direct KL recursion gives $P_{x,y}(\qpar)=1+\qpar$
and $P^{J,+}_{x,y}(\qpar)=\qpar$. The same FKK dictionary gives
$[e_a]b_c=\tpar^3P^{J,+}_{x,y}(\tpar^{-2})=\tpar$, agreeing with
\autoref{eq:sixcoordinates}.
\end{example}

\begin{example}\label{ex:fkk-two}
For $m=2$, the permutations lie in $S_{30}$ and their length gap
is $45$. By \autoref{eq:raw-two-coordinate} below and
\autoref{thm:FKK}, $\mu(x_2,y_2)=2$.
More explicitly, \autoref{eq:FKK} gives
\[
P^{J,+}_{x_2,y_2}(\qpar)
=4\qpar^{16}+77\qpar^{17}+377\qpar^{18}+607\qpar^{19}
+355\qpar^{20}+70\qpar^{21}+2\qpar^{22}.
\]
Indeed, $\tpar^{k-45}=\qpar^{(45-k)/2}$ under $\qpar=\tpar^{-2}$,
so the term $2\tpar$
becomes $2\qpar^{22}$. The degree $22$ is the critical degree,
and the ordinary KL polynomial has the same coefficient $2$ there.
\end{example}

\section{From webs to the dual canonical basis}\label{sec:basischange}

We now explain a crucial change of basis.

\subsection{Three kinds of coordinates}

Fix a boundary with $3N$ positive signs and write $\mathcal D_N$ for
the dominant words of content $(N,N,N)$. A word $w$ in $\mathcal D_N$ labels
two invariant tensors: the non-elliptic (ne) web $W_w$ and the dual canonical (dc) vector $b_w$.
For any state word $s\in\{A,B,C\}^{3N}$, the state tensor (s) $e_s$
is a third object; it is usually not invariant. The word $s$ need not be dominant. 
For clarity, the three bases and their uses are
\begin{center}
\begin{tabular}{lll}
\toprule
Basis&Space&Use in the proof\\\midrule
$\{e_s\}$, all state words&$V^{\otimes3N}$&Tensor coefficients\\
$\{W_w:w\in\mathcal D_N\}$&$\Inv(V^{\otimes3N})$&Planar construction\\
$\{b_w:w\in\mathcal D_N\}$&$\Inv(V^{\otimes3N})$&FKK and KL coefficients\\\bottomrule
\end{tabular}
.
\end{center}
Order state strings lexicographically with $C\prec B\prec A$; write
$z\prec w$ when the state string for $z$ is smaller. This is the
reverse of lexicographic order on words with $A<B<C$. Then \cite[proof of Proposition $2.4'$]{FKK}, \cite[Corollary 4.5]{KashiwaraTanisaki},
\cite[Theorem 2]{KK}, 
and \cite[Corollary 3.7]{TubbenhauerCrystal} give
\begin{gather*}
\begin{aligned}
W_w&=e_w+\sum_{s\prec w}\mathcal M_{s,w}(\tpar)e_s,
&&\mathcal M_{s,w}(\tpar)\in\Z_{\geq 0}[\tpar,\tpar^{-1}],\text{(ne in s coordinates)}\\
b_w&=e_w+\sum_{s\prec w}\mathcal N_{s,w}(\tpar)e_s,
&&\mathcal N_{s,w}(\tpar)\in \tpar\Z_{\geq 0}[\tpar],\text{(dc in s coordinates)}\\
b_w&=W_w+\sum_{z\prec w}\mathcal K_{z,w}(\tpar)W_z,
&&\mathcal K_{z,w}(\tpar)\in\Z[\tpar,\tpar^{-1}],\text{(dc in ne coordinates)}.
\end{aligned}
\end{gather*}
In the first two sums $s$ ranges over all tensor state words;
in the last, $z$ ranges over dominant words.

\begin{lemma}\label{lem:bar-symmetric-coefficients}
Suppose
$b_w=\sum_z \mathcal K_{z,w}(\tpar)W_z$.
Then
$\mathcal K_{z,w}(\tpar)=\mathcal K_{z,w}(\tpar^{-1})$
for all $z,w$.
\end{lemma}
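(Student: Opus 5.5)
Apply the tensor bar involution $\Phi=\Phi_{+^{3N}}$ to the expansion $b_w=\sum_z \mathcal K_{z,w}(\tpar)W_z$ and compare coefficients in the web basis. The two facts needed are already available: every web is bar invariant, and so is every dual canonical vector.

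First, I would record that each $W_z$ is fixed by $\Phi$. A web $W_z$ with no inputs and $3N$ positive outputs is a map $\one\to V^{\otimes 3N}$. By \autoref{lem:web-bar}, $\Phi_{+^{3N}}\circ W_z=W_z\circ\Phi_{\emptyset}$. On the trivial module $\one$, the bar $\Phi_\emptyset$ fixes the basis vector $1$ and is $\tpar$-antilinear. Hence $\Phi(W_z)=\Phi(W_z(1))=W_z(\Phi_\emptyset(1))=W_z(1)=W_z$. Second, by \autoref{thm:canonical}, $b_w$ is fixed by $\Phi$.

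Next, I would use antilinearity of $\Phi$ to get
\[
b_w=\Phi(b_w)=\sum_z \mathcal K_{z,w}(\tpar^{-1})\,\Phi(W_z)=\sum_z \mathcal K_{z,w}(\tpar^{-1})\,W_z .
\]
By \autoref{thm:spider}, the non-elliptic webs $\{W_z: z\in\mathcal D_N\}$ form a basis of $\Inv(V^{\otimes 3N})$ over $\F$. This uses the standard identification of dominant words with non-elliptic webs via the growth algorithm, which the paper takes as given. Comparing coefficients in this basis gives $\mathcal K_{z,w}(\tpar)=\mathcal K_{z,w}(\tpar^{-1})$.

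The only real obstacle is checking that "antilinear" means exactly $\tpar\mapsto\tpar^{-1}$ on scalars. Under that convention, the $\F$-coefficients $\mathcal K_{z,w}$, which lie in $\Z[\tpar,\tpar^{-1}]\subset\F$, are sent to $\mathcal K_{z,w}(\tpar^{-1})$. This matches the stated definition, where the bar sends $\tpar$ to $\tpar^{-1}$ and $\Phi_S$ is $\tpar$-antilinear, so no further work should be needed.
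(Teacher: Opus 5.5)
Your proposal is correct and follows essentially the same route as the paper's proof: bar invariance of $b_w$ (\autoref{thm:canonical}) and of each $W_z$ (\autoref{lem:web-bar}), $\tpar$-antilinearity of the tensor bar, and uniqueness of coefficients in the web basis. Your extra step deriving $\Phi(W_z)=W_z$ from $\Phi_{+^{3N}}\circ W_z=W_z\circ\Phi_\emptyset$ simply makes explicit what the paper leaves implicit.
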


\begin{proof}
Both $b_w$ and every web $W_z$ are fixed by the tensor bar, cf. \autoref{lem:web-bar}
and \autoref{thm:canonical}. Applying the bar to the expansion above therefore gives
$b_w=\sum_z \mathcal K_{z,w}(\tpar^{-1})W_z$.
Since the $W_z$ form a basis, uniqueness of the expansion implies
$\mathcal K_{z,w}(\tpar)=\mathcal K_{z,w}(\tpar^{-1})$.
\end{proof}

Write $\mathcal M$ for the matrix from ne coordinates to s coordinates,
$\mathcal N$ for the matrix from dc coordinates to s,
and $\mathcal K$ for the change-of-basis matrix from dc to
ne. Thus $\mathcal N=\mathcal M\mathcal K$, with
\[
[e_a]b_w=[e_a]W_w+\sum_{z\prec w}\mathcal K_{z,w}(\tpar)[e_a]W_z.
\]
The coefficient used by FKK is an entry of $\mathcal N$, and the correction
from $W_w$ to $b_w$ is an entry of $\mathcal K$.

\subsection{What the correction removes}

The following is the usual intermediate-basis-calculation as e.g. in
\cite[Section 4.2]{LeclercToffin} or \cite{TubbenhauerCrystal}.

\begin{lemma}\label{lem:elimination}
Let $w\in\mathcal D_N$. Process the lower dominant labels
$z\prec w$ from largest to smallest. If the current coefficient at
$e_z$ has constant and negative part
$\alpha_0+\sum_{k>0}\alpha_{-k}\tpar^{-k}$, subtract
$(\alpha_0+\sum_{k>0}\alpha_{-k}(\tpar^{-k}+\tpar^k))b_z$.
Repeating this produces $b_w$ and stays in the invariant subspace.
\end{lemma}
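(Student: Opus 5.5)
The plan is to run the elimination as an induction over the finitely many dominant labels below $w$, using three invariants: membership in $\Inv(V^{\otimes3N})$, bar invariance, and the leading term $e_w$. Start with $X_0=W_w$. It is invariant and bar invariant by \autoref{lem:web-bar}. By the unitriangularity of $\mathcal M$ it has the form $e_w+\sum_{s\prec w}\mathcal M_{s,w}e_s$.

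At each step, subtract the symmetrized element $\beta_z=\alpha_0+\sum_{k>0}\alpha_{-k}(\tpar^{-k}+\tpar^k)$ times $b_z$. Since $\beta_z$ is bar invariant and $b_z$ is bar invariant and lies in $\Inv(V^{\otimes3N})$ (\autoref{thm:canonical}), both invariance properties persist. The leading coefficient at $e_w$ is untouched, because $b_z$ involves only $e_z$ and states $s\prec z\prec w$.

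\textbf{Step 1: effect of one subtraction.} Write $c_z(\tpar)$ for the current coefficient at $e_z$. Subtracting $\beta_z b_z$ changes it to $c_z-\beta_z$, since $[e_z]b_z=1$. This difference lies in $\tpar\Z[\tpar]$, because $\beta_z$ agrees with $c_z$ in nonpositive degrees and adds only positive powers. Coefficients at states $s$ with $s\succ z$ are unchanged, by triangularity of $b_z$. So once $z$ is processed, its coefficient is never disturbed again, as later steps use $z'\prec z$. Processing from largest to smallest therefore makes every dominant coordinate $e_z$ with $z\prec w$ lie in $\tpar\Z[\tpar]$, while the result is still an invariant, bar-invariant vector with leading term $e_w$.

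\textbf{Step 2: identification with $b_w$.} Write the final vector $X$ in the basis $\{b_z\}$ of $\Inv(V^{\otimes3N})$ from \autoref{thm:canonical}, as $X=b_w+\sum_{z\prec w}\gamma_z b_z$; triangularity forces this shape. Bar invariance of $X$ forces each $\gamma_z$ to be bar invariant. Suppose some $\gamma_z\neq0$, and take the $\prec$-largest such $z$. Then $[e_z]X=\gamma_z+(\text{terms from }b_w\text{ and larger }b_{z'})$, and the latter lie in $\tpar\Z[\tpar]$. Since $[e_z]X\in\tpar\Z[\tpar]$, we get $\gamma_z\in\tpar\Z[\tpar]$. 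A nonzero bar-invariant Laurent polynomial cannot lie in $\tpar\Z[\tpar]$, a contradiction. Hence $X=b_w$.

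\textbf{Main obstacle.} The delicate point is that the elimination controls only the dominant coordinates, while the defining property of $b_w$ concerns all state coordinates. Step 2 bypasses this by working in the invariant subspace, where the $b_z$ form a basis and the dominant coordinates detect the expansion triangularly. This needs the fact that the coordinate functionals $e_z$, for $z\in\mathcal D_N$, restricted to invariants, are unitriangular against $\{b_z\}$; I would take that from the triangularity statement for $\mathcal N$.

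Integrality, namely $\beta_z\in\Z[\tpar,\tpar^{-1}]$, follows inductively because every $b_z$ has integral coordinates. Termination is clear, since $\mathcal D_N$ is finite.
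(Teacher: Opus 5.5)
Your proof is correct and uses the same ingredients as the paper's: expansion in the invariant basis $\{b_z\}$, bar invariance of the expansion coefficients, triangularity of each $b_z$, and $[e_z]b_w\in\tpar\Z[\tpar]$. The difference is only organizational: the paper expands $W_w=b_w+\sum_{z\prec w}\gamma_z b_z$ at the outset and checks that each step subtracts exactly $\gamma_z b_z$, whereas you run the elimination first and then rule out a largest nonzero $\gamma_z$ in the expansion of the final vector.
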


\begin{proof}
The correction coefficient has the required bad part and is
bar invariant. The remaining coordinates of $b_z$ lie below $z$, so
elimination cannot spoil a coordinate already treated. To see that
dominant labels suffice, expand $W_w$ in the invariant basis
$\{b_z\mid z\in\mathcal D_N\}$. Its leading term is $b_w$, and the
lower coefficients are bar-invariant Laurent polynomials. At each
step the bad part recovers the full coefficient of the largest
remaining $b_z$, since $[e_z]b_w\in \tpar\Z[\tpar]$. Thus all lower summands
are removed, leaving $b_w$.
\end{proof}

The tensors remain invariant throughout, but the congruence in
\autoref{thm:canonical} concerns all tensor coordinates, including nondominant states. For instance, at a lower
dominant label $z$, a residual coefficient $\tpar^{-1}+3\tpar$ is corrected by
subtracting $(\tpar+\tpar^{-1})b_z$, leaving $2\tpar$ at $e_z$.

\begin{example}\label{ex:oneblockchange}
Consider $c_1=\word{ABAAAC}\,\word{BBBAACACC}\,\word{BBCBCC}$.
Put
\[
z_1=\word{ABABACCBC}(ABC)^4,\quad
z_2=\word{ABCAABCBC}(ABC)^4,\quad z_3=(ABC)^7.
\]
Each $W_{z_i}$ has no internal faces, cf.
\autoref{fig:oneblockchange}, and is therefore dual canonical
by the argument in \cite[proof of Proposition 3]{KK}. For $i=1$ it is a nine-point web
followed by four disjoint tripods; for $i=2$ it is a tripod followed by the
six-point web in \autoref{ex:tableau-to-web} and four further tripods;
for $i=3$ it is seven tripods.

The full nonpositive $\tpar$-degree part of $W_{c_1}$ is
\[
e_{c_1}+e_{z_1}+e_{z_2}+e_{z_3}.
\]
The corresponding part of $W_{z_i}$ is just $e_{z_i}$. Hence
\begin{equation}\label{eq:oneblockchange}
b_{c_1}=W_{c_1}-W_{z_1}-W_{z_2}-W_{z_3}.
\end{equation}
This is an actual web-to-dual-canonical change of basis. On the
four-dimensional span of these webs, its matrix is
\[
(b_{z_1},b_{z_2},b_{z_3},b_{c_1})
=(W_{z_1},W_{z_2},W_{z_3},W_{c_1})
\begin{pmatrix}1&0&0&-1\\0&1&0&-1\\0&0&1&-1\\0&0&0&1\end{pmatrix}.
\]
For $a_1=(ABC)^7$, direct tensor contraction gives
\[
\begin{aligned}
[e_{a_1}]W_{c_1}&=1+10\tpar^2+30\tpar^4+24\tpar^6+3\tpar^8,\\
[e_{a_1}](W_{z_1}+W_{z_2}+W_{z_3})&=1+2\tpar^2,\\
[e_{a_1}]b_{c_1}&=8\tpar^2+30\tpar^4+24\tpar^6+3\tpar^8.
\end{aligned}
\]
The correction removes the extra constant coefficient.
\end{example}

\begin{figure}[ht]
\centering
\includegraphics[width=.94\linewidth]{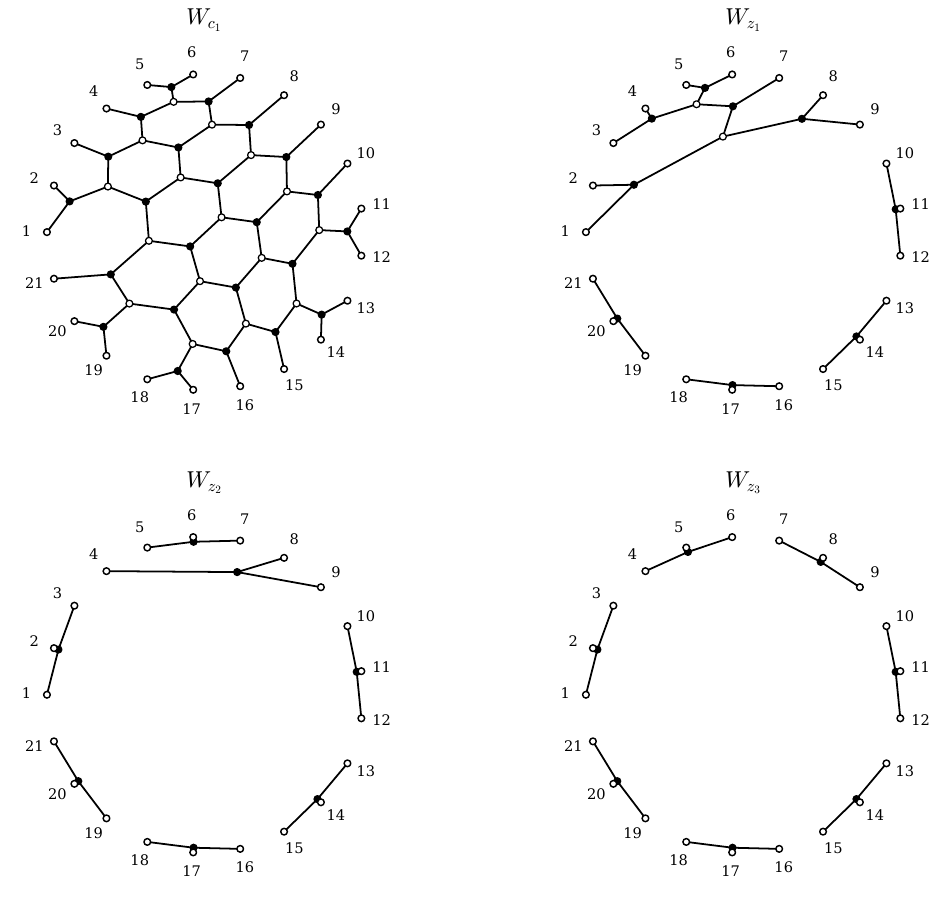}
\caption{The four webs in \autoref{eq:oneblockchange}. The lower webs
have isolated tripod components, which are retained at their actual
boundary positions. All four diagrams have $21$ positive boundary points.}
\label{fig:oneblockchange}
\end{figure}

\subsection{The two-block example: the actual lower webs}

For $c_2=\wordstart\wordmiddle^2\wordend$ the tensor coordinate $[e_{a_2}]W_{c_2}$ is already enough to
detect a failure of canonicality:
\begin{equation}\label{eq:raw-two-coordinate}
\begin{aligned}
[e_{a_2}]W_{c_2}
&=\tpar^{-1}+18\tpar+125\tpar^3+409\tpar^5+617\tpar^7+377\tpar^9+77\tpar^{11}+4\tpar^{13},\\
[e_{a_2}]b_{c_2}
&=2\tpar+70\tpar^3+355\tpar^5+607\tpar^7+377\tpar^9+77\tpar^{11}+4\tpar^{13}.
\end{aligned}
\end{equation}
The first line contains $\tpar^{-1}$ and therefore cannot be the
coordinate of a dual canonical vector with another leading state.
The correction contributes $\tpar^{-1}+16\tpar+55\tpar^3+54\tpar^5+10\tpar^7$.
Thus the required coefficient of $\tpar$ is $18-16=2$.
This is $\mu(x_2,y_2)$ by \autoref{thm:FKK}: for the length gap $45$,
the FKK normalization sends the critical power $\qpar^{22}$ to
$\tpar^{45-2\cdot22}=\tpar$.
The complete correction has $94$ lower webs. Split their indexing words
into sets $\mathcal R$ and $\mathcal F$ according to the multiplier:
\begin{equation}\label{eq:94correction}
b_{c_2}=W_{c_2}
-\sum_{z\in\mathcal R}W_z
-(\tpar+\tpar^{-1})\sum_{z\in\mathcal F}W_z,
\qquad |\mathcal R|=91,\quad |\mathcal F|=3.
\end{equation}
All words in these sets have ten occurrences of each letter and are
strictly below $c_2$ in the state order. The three words in
$\mathcal F$ are particularly simple:
\[
f_1=\word{ABABACCBC}(ABC)^7,\qquad
f_2=\word{ABCAABCBC}(ABC)^7,\qquad f_3=(ABC)^{10}.
\]
Thus $W_{f_1}$ and $W_{f_2}$ have the same nine-point cores as the lower
webs in \autoref{ex:oneblockchange}, with seven tripods appended.
The web $W_{f_3}$ consists of ten tripods. Three examples from
$\mathcal R$ are
\[
\begin{aligned}
r_1&=\word{ABAAACBBBABCACCCBC}(ABC)^4,\\
r_2&=\word{ABAAACBBBCABACCCBC}(ABC)^4,\\
r_3&=\word{ABAAACBBCBACABCCBC}(ABC)^4.
\end{aligned}
\]
Their pictures are in \autoref{fig:twoblocklower}; the indicated
coefficients are entries of $\mathcal K$, not tensor coordinates.
\begin{figure}[p]
\centering
\includegraphics[width=.90\linewidth,height=.82\textheight,keepaspectratio]{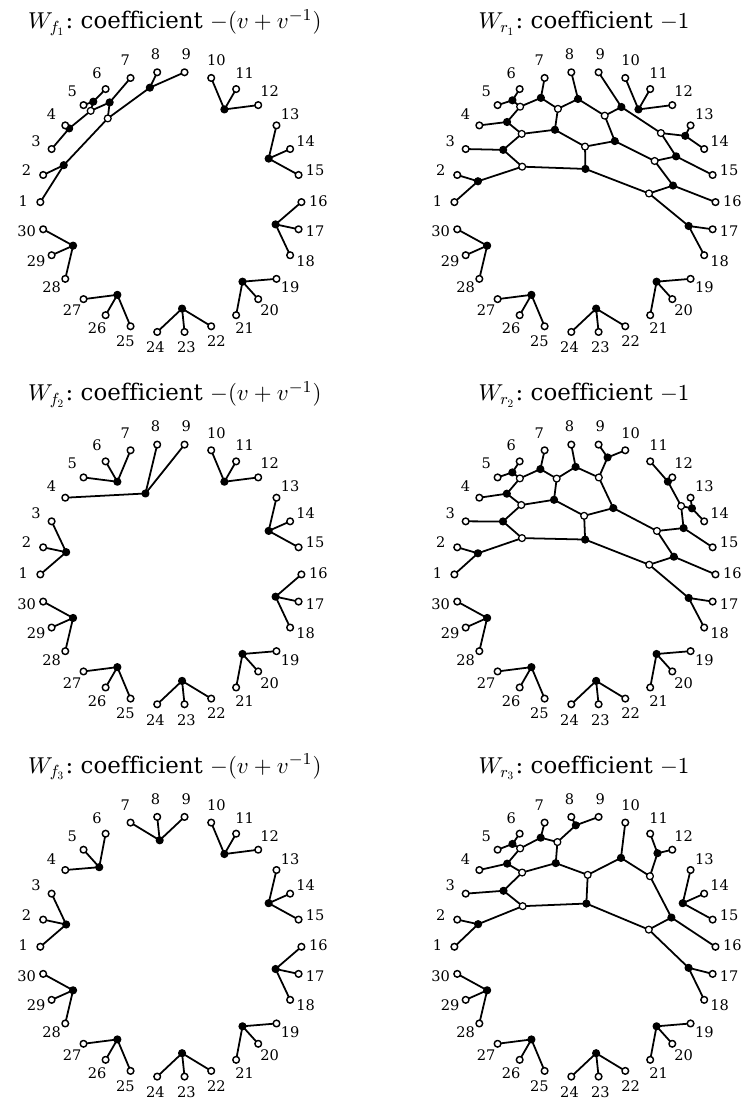}
\caption{Six lower webs in \autoref{eq:94correction}. The left column
contains all three Laurent corrections; the right column shows three
of the $91$ corrections with coefficient $-1$. Every diagram has
$30$ positive boundary points, numbered in the same order as $c_2$.
The first two left webs share the seven tripods at positions $10$ to $30$.}
\label{fig:twoblocklower}
\end{figure}

In this example every one of the $94$ lower webs is itself dual
canonical: $W_z=b_z$ for $z\in\mathcal R\cup\mathcal F$.
Consequently the inverse change of basis is just
\[
W_{c_2}=b_{c_2}+\sum_{z\in\mathcal R}b_z
+(\tpar+\tpar^{-1})\sum_{z\in\mathcal F}b_z.
\]
The raw web therefore has one desired canonical summand and $94$ lower
ones. Removing the latter gives $b_{c_2}$. These identities and the
canonicality of the lower webs are checked in all tensor coordinates
by the accompanying script.

Here is how the pictured terms affect the coordinate at $a_2=(ABC)^{10}$.
\begin{center}
\begin{tabular}{ccc}
\toprule
Web&Coefficient in $b_{c_2}$&Tensor coordinate $[e_{a_2}]W_z$\\\midrule
$W_{f_1}$&$-(\tpar+\tpar^{-1})$&$\tpar^2$\\
$W_{f_2}$&$-(\tpar+\tpar^{-1})$&$\tpar^2$\\
$W_{f_3}$&$-(\tpar+\tpar^{-1})$&$1$\\\midrule
$W_{r_1}$&$-1$&$\tpar+5\tpar^3+3\tpar^5$\\
$W_{r_2}$&$-1$&$\tpar^3+4\tpar^5+\tpar^7$\\
$W_{r_3}$&$-1$&$\tpar+3\tpar^3+\tpar^5$\\\bottomrule
\end{tabular}
\end{center}
For the three Laurent terms, multiplication by $\tpar+\tpar^{-1}$ gives
\[
(\tpar+\tpar^{-1})(\tpar^2+\tpar^2+1)=\tpar^{-1}+3\tpar+2\tpar^3.
\]
The remaining $91$ terms give
\[
[e_{a_2}]\sum_{z\in\mathcal R}W_z
=13\tpar+53\tpar^3+54\tpar^5+10\tpar^7.
\]
This accounts for every coefficient of the correction in
\autoref{eq:raw-two-coordinate}. In particular, the three Laurent terms
remove $3\tpar$, the other terms remove $13\tpar$, and $18-3-13=2$ remains.
The negative power is canceled by the $f_3$ term. Its paired positive
power is forced by bar invariance, so it also changes the coefficient
we are trying to compute.

One can also see why the final check must include nondominant states.
Take $s=\word{BAABACCBC}(ABC)^7$, which starts with $B$ and is therefore
not dominant. Direct contraction gives
\[
[e_s]W_{f_1}=\tpar,\qquad
[e_s]\bigl((\tpar+\tpar^{-1})W_{f_1}\bigr)=1+\tpar^2.
\]
The raw web has constant coefficient $1$ at $e_s$, and this correction
removes it. A calculation confined to the dominant indexing words
would miss this coordinate. The raw web has $156$ nonzero coordinates
in nonpositive $\tpar$-degree, of which $58$ are nondominant. After the
full correction, only the leading coordinate $e_{c_2}$ remains in
those degrees.

\begin{remark}
The calculation above can be checked by hand, but arguably this is
neither pleasant nor a good use of time.
For a computer check, \texttt{M2Data.lean} lists all $94$ lower words,
their multipliers and their coordinates at $a_2$, and
\texttt{M2Flow.lean} provides the tensor contraction routine.
Both files are in \repo{code/Lean/WebMu/Revised/}
in the repository \cite{GoLaTuVa-mu-code}.
\end{remark}

\subsection{From one column to the uniform construction}

Listing such a column separately for every $m$ would not give a
uniform proof. Instead, \autoref{sec:construction} builds a web
combination $\beta_m$ from five fixed maps.
Here $\middlemap$ is the block map and $\createmap,\removemap$ are correction maps.
Identity maps on external strands are understood in these compositions.
For two blocks it is
\[
\beta_2=\startmap\middlemap^2\finishmap-\startmap\createmap\removemap\finishmap-\startmap\middlemap\createmap\tau^2+d\startmap\createmap\tau^5=(\startmap,0)\cdot\begin{pmatrix}\middlemap&-\createmap\\\removemap&-d\tau^3\end{pmatrix}^2\cdot\binom{\finishmap}{\tau^2},
\]
where $\startmap\middlemap^2\finishmap=W_{c_2}$ and $d=\tpar+\tpar^{-1}$. Similarly, we have
\[
\beta_1=\startmap\middlemap\finishmap-\startmap\createmap\tau^2=(\startmap,0)\cdot\begin{pmatrix}\middlemap&-\createmap\\\removemap&-d\tau^3\end{pmatrix}\cdot\binom{\finishmap}{\tau^2}.
\]
Expanding the three correction
terms of $\beta_2$ in the non-elliptic web basis gives the column above. Grouping
them by the three-strand interface produces a formula that extends
to every $m$. The local degree argument in \autoref{sec:canonicality}
proves that this construction indeed gives $\beta_m=b_{c_m}$.

\subsection{The categorical interpretation}

For readers who prefer categorification, a web defines a projective
module over the web algebra \cite{MPT}. Such a projective can split.
The dual canonical basis corresponds instead to normalized
indecomposable projectives \cite[Corollary 7.9]{MackaayDual}. The
cyclotomic Hecke/KLR comparison and its canonical-basis-consequences
are developed in \cite{BrundanKleshchevBlocks,BrundanKleshchev}.
Robert's red graphs detect certain splittings geometrically
\cite{RobertFamily,RobertCharacterization}. This explains why reducing
a web can reveal correction terms. The tensor calculation still has
to establish the coefficients and the leading state of the desired
canonical vector. In our proof these are supplied by the fixed maps
and their local degree bounds.

\section{A corrected tensor on a fixed interface}\label{sec:construction}

We construct $\beta_m$ from fixed prefix (start), block (middle) and tail (end) maps, with corrections at each block. We will prove $\beta_m=b_{c_m}$ below.
The raw web has three connecting strands, with $3^3=27$
tensor states. Some correction terms have no connecting strands across
the cut; a scalar coordinate records this case. The connecting space
therefore has dimension
$27+1=28$, independently of $m$. This fixed dimension makes the computation possible: after
fixing the output state, the iteration reduces to calculations with
$28\times28$ matrices.

\subsection{Growth strings and local maps}\label{sec:localmaps}

The following strings specify the maps completely. They use the
KK growth operations \cite[Section 5]{KK}.
Recall that $c_m=\wordstart\wordmiddle^m\wordend$, with a six-letter prefix $\wordstart$, a nine-letter
block $\wordmiddle$, and a six-letter tail $\wordend$.
A downward growth string reduces the output boundary to the input
boundary. Read it from left to right, renumbering the remaining
strands from zero after each step. The signs $+,-$ specify
$V^\pm$, independently of the state labels $A,B,C$. The operation $Y_i$ merges the equal-sign
strands at positions $i,i+1$; $H_i$ applies the two-vertex web $H$ to
opposite signs; and $a_i$ removes an adjacent opposite-sign pair.
The remaining signs are the input signs of the map.

To construct the map upward, read the string in reverse, replacing
$Y_i$ by a split $y$ and $a_i$ by a cup $\sigma'$.
The operation $H_i$ uses the planar web $H$ in
\autoref{fig:generators}; identity maps act on the other strands.
This fixes the graph and its cyclic orders.

To compute a covector from a fixed output state, read downward
using the ordinary coordinate transposes of the upward maps.

The correction tables also use maps on three positive input strands.
Put
\[
\lambda_1=\lambda\otimes I,
\qquad \lambda_2=I\otimes\lambda,
\qquad E_1=(y\lambda)\otimes I,
\qquad E_2=I\otimes(y\lambda).
\]
Thus $\lambda_1$ acts on the first two strands and $\lambda_2$ on the
last two; the subscripts here are numbered from one. Products of maps
are composed from right to left, so $\lambda_2E_1$ first applies $E_1$.
This convention for composition is distinct from the left-to-right
reading of a downward growth string.

The prefix $\startmap$, block $\middlemap$ and tail $\finishmap$ correspond to
$\wordstart,\wordmiddle,\wordend$, respectively. Together with the correction
maps $\createmap,\removemap$, they have types
\[
\begin{array}{ll}
\startmap:V^{\otimes3}\to V^{\otimes6},&
\middlemap:V^{\otimes3}\to V^{\otimes3}\otimes V^{\otimes9},\\
\finishmap:\one\to V^{\otimes3}\otimes V^{\otimes6},&
\createmap:\one\to V^{\otimes3}\otimes V^{\otimes9},\\
\removemap:V^{\otimes3}\to V^{\otimes9}.&
\end{array}
\]
For the block and tail, we have bent three negative inputs
into the first three positive outputs, preserving the boundary order.
Before bending, these pieces have types
$(V^-)^{\otimes3}\otimes V^{\otimes3}\to V^{\otimes9}$ and
$(V^-)^{\otimes3}\to V^{\otimes6}$, respectively.
The first three outputs are the connecting strands; the remaining
outputs carry $\wordmiddle$ or $\wordend$.
The map $\createmap$ creates the three connecting strands, and
$\removemap$ removes them.
Start with $6$, $12$ and $9$ positive output strands for the
prefix, block and tail, respectively. The following strings
define $\startmap,\middlemap,\finishmap$; see \autoref{fig:pieces}.
\begin{equation}\label{eq:growth}
\begin{aligned}
\startmap &:Y_0Y_3H_2H_1Y_0,\\
\middlemap &:Y_2Y_6Y_7H_2Y_1H_2H_1Y_0H_3Y_2H_3Y_4H_2Y_1H_2H_1Y_0,\\
\finishmap &:Y_2Y_3Y_2Y_1Y_2Y_1Y_0a_0.
\end{aligned}
\end{equation}
With one-based numbering, the prefix string is precisely
$Y_1Y_4H_3H_2Y_1$. In particular, its first operation merges the first
two positive strands, whose leading state labels are $A,B$.

\begin{figure}[ht]
\centering
\begin{tikzpicture}
\node[inner sep=0,anchor=south west] (pieces) at (0,0)
  {\includegraphics[width=\linewidth]{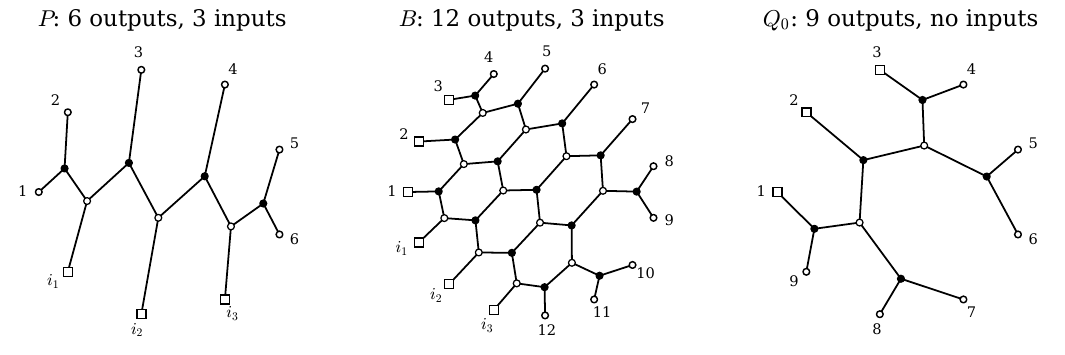}};
\begin{scope}[x={(pieces.south east)},y={(pieces.north west)}]
\fill[white] (0,.90) rectangle (1,1);
\node[anchor=north,inner sep=0,font=\normalsize] at (.1524,.989)
  {$\startmap$: 6 outputs, 3 inputs};
\node[anchor=north,inner sep=0,font=\normalsize] at (.4996,.989)
  {$\middlemap$: 12 outputs, 3 inputs};
\node[anchor=north,inner sep=0,font=\normalsize] at (.8462,.989)
  {$\finishmap$: 9 outputs, no inputs};
\end{scope}
\end{tikzpicture}
\caption{The three pieces of the raw web. The inputs $i_1,i_2,i_3$
of $\startmap$ and $\middlemap$ are joined to outputs $1,2,3$ of the next piece.
Squares mark the three-strand interfaces.
All labels refer to the local piece, before gluing.}
\label{fig:pieces}
\end{figure}

These strings reduce the leading states as
$\wordstart\to AAA$, $AAA\,\wordmiddle\to AAA$, and $AAA\,\wordend\to\varnothing$.
For the tail, the first three merges give
$AAA\,\wordend\to AAABCBCC\to AAACBCC\to AABBCC$.
The remaining steps reduce $AABBCC$ to the empty boundary using only
$Y$ and $a$; no $H$ is needed.
For the longer strings, abbreviate
$\mathsf R_i=H_{i+1}Y_i$ and $\mathsf S=H_2H_1Y_0$.

To define $\createmap$, let $O$ be the map with twelve positive outputs and
downward string $Y_2Y_6Y_7\mathsf R_1$. Its eight surviving input
signs are $+--++--+$. Pair these inputs consecutively by four cups,
and denote the resulting tensor by $\kappa_8$. Then $\createmap=O\kappa_8$.
The resulting web is shown in \autoref{fig:correction}.
It gives the correction in \autoref{ex:oneblockchange}:
\[
\startmap\createmap\tau^2=W_{z_1}+W_{z_2}+W_{z_3}.
\]

\begin{figure}[ht]
\centering
\begin{tikzpicture}[x=1cm,y=1cm,font=\small]
\node[anchor=east] at (-2.1,0) {$\createmap=$};
\foreach \i in {1,...,12} {
  \pgfmathsetmacro{\angle}{210-30*\i}
  \coordinate (b\i) at (\angle:1.45);
  \node at (\angle:1.70) {$\i$};
}
\coordinate (L) at (165:.95);
\coordinate (U) at (105:.95);
\coordinate (R) at (45:.95);
\coordinate (O) at (0,.45);
\draw[web] (b1)--(L)--(b2) (b3)--(U)--(b4)
  (b5)--(R)--(b6) (L)--(O)--(U) (O)--(R);
\foreach \v in {L,U,R}
  \node[sink] at (\v) {};
\node[source] at (O) {};
\coordinate (T1) at (-30:.95);
\coordinate (T2) at (-120:.95);
\draw[web] (b7)--(T1)--(b8) (T1)--(b9)
  (b10)--(T2)--(b11) (T2)--(b12);
\foreach \v in {T1,T2}
  \node[sink] at (\v) {};
\foreach \i in {1,2,3}
  \node[rectangle,draw=black,fill=white,line width=.9pt,inner sep=2.4pt] at (b\i) {};
\foreach \i in {4,...,12}
  \node[circle,draw=black,fill=white,inner sep=.9pt] at (b\i) {};
\end{tikzpicture}
\caption{The correction map
$\createmap=W_{ABACBC}\otimes\tau^2$.
Squares mark outputs $1,2,3$, which are reserved for the next attachment.}
\label{fig:correction}
\end{figure}

The second correction is $\removemap=\sum_{i=1}^8D_i$. Each term is
$D_i=P_i h_i$, where $P_i$ has nine positive outputs and the downward
string in the table. The last column gives the input signs of $P_i$;
they agree with the output signs of the small map $h_i$.
\begin{center}\small
\begin{tabular}{llll}
\toprule
Map&Downward growth of $P_i$&$h_i$&Input signs of $P_i$\\\midrule
$D_1$&$Y_0Y_3Y_4a_4\mathsf S$&$I$&$+++$\\
$D_2$&$Y_0Y_3Y_4Y_3Y_2\mathsf R_0$&$I$&$+++$\\
$D_3$&$Y_0Y_2a_2Y_2\mathsf R_0Y_1$&$\lambda_2$&$+-$\\
$D_4$&$Y_0Y_1Y_2Y_1Y_2\mathsf R_0Y_1$&$\lambda_2E_1$&$+-$\\
$D_5$&$Y_0Y_1Y_0Y_1Y_2Y_1Y_1$&$\lambda_2$&$+-$\\
$D_6$&$Y_0a_0Y_1Y_2Y_1Y_1$&$\lambda_2E_1$&$+-$\\
$D_7$&$Y_0a_0Y_0a_0Y_0$&$\lambda_1$&$-+$\\
$D_8$&$Y_0a_0Y_0a_0Y_0a_0$&$\eta$&$\varnothing$\\\bottomrule
\end{tabular}
\end{center}
All eight terms have coefficient $1$. The signs of the correction
are specified in \autoref{eq:maps}. See also \autoref{fig:atlas-term4} for the corresponding webs.

\subsection{Gluing the basic pieces}

Start with $\finishmap$, leaving its last six outputs external. At each
of the $m$ steps, join the first three outputs to the inputs of
$\middlemap$, in order. Keep the block's first three outputs for the next
attachment and leave its other nine outputs external. Finally
attach $\startmap$, whose six outputs complete the prefix.

\begin{lemma}
For every $m\ge0$, this gluing gives
\[
W_{c_m}=\startmap\middlemap^m\finishmap,\qquad
6+9m+6=12+9m\text{ external outputs}.
\]
\end{lemma}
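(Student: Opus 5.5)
The plan is to prove the equality $W_{c_m}=\startmap\middlemap^m\finishmap$ by induction on the block count, using the KK growth algorithm to identify both sides. The key fact is that the KK growth algorithm \cite[Section 5]{KK} is local and deterministic: reading a word from left to right, it builds the non-elliptic web $W_w$ by a sequence of growth rules, and the state of the process after a prefix is a boundary sign string together with its leading states. Thus two webs built from the same word by applying the same growth rules in the same order coincide as planar graphs. In particular, once the intermediate boundary after each piece is fixed, the resulting web factors as a composite of the pieces.

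First I will verify the interface claims already stated after \autoref{eq:growth}. The downward strings reduce the leading states as
\[
\wordstart\to AAA,\qquad AAA\,\wordmiddle\to AAA,\qquad AAA\,\wordend\to\varnothing.
\]
This is the statement that, after processing $\wordstart\wordmiddle^k$, the growth algorithm has three positive strands remaining, all in leading state $A$. This matches the prefix differences $(3,0)$ computed in the proof that $c_m$ is dominant. The $AAA$ interface is the same at every cut, so each piece depends only on its own letters and not on its position. This justifies reading each of $\startmap,\middlemap,\finishmap$ as an honest web map with the stated input and output types.

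The induction then runs as follows. For $m=0$, composing $\startmap$ with $\finishmap$ along the three strands reproduces the full downward reduction of $c_0=\wordstart\wordend$, hence gives $W_{c_0}$. For the inductive step, the growth of $c_{m+1}$ agrees with that of $c_m$ on the prefix $\wordstart\wordmiddle^m$ and reaches the interface $AAA$. Inserting one more $\middlemap$ realizes the rules for the next $\wordmiddle$ and returns to $AAA$, after which $\finishmap$ finishes as before. The bending of the three negative inputs into the first three outputs preserves the boundary order, so planarity and the cyclic orders match. Hence the glued web is the KK web of $c_{m+1}$, and non-ellipticity follows from the lemma that $W_w$ is non-elliptic. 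The boundary count $6+9m+6$ is immediate from the output counts.

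The main obstacle is the claim that the growth strings in \autoref{eq:growth} are exactly the KK rules applied to these letters, and not merely some web with the same boundary. Without that, the factorization would not identify the glued web with $W_{c_m}$. I would check this by hand on the three short strings, comparing each $Y$, $H$, $a$ step with the KK rule dictated by adjacent sign/state pairs. Alternatively, I would use uniqueness of non-elliptic webs with a given leading state (\autoref{thm:spider} together with unitriangularity): it suffices to show the glued web is non-elliptic and has leading term $e_{c_m}$. Non-ellipticity of the glue holds because the cut has only three strands and each piece is non-elliptic, and I would rule out new squares or digons across the cut by a direct face count at the interface.
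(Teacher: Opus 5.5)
Your main route is the paper's. Verify that the strings in \autoref{eq:growth} are admissible KK growth steps on $\wordstart$, $AAA\,\wordmiddle$ and $AAA\,\wordend$, so that their concatenation (which your induction merely organizes) is a complete growth of $c_m$ to the empty boundary, and conclude by \cite[Section~5, Lemmas~1--3]{KK}. Note that the input needed from KK is that the grown web does not depend on the choice and order of admissible steps, not ``determinism''; your fallback claim that gluing non-elliptic pieces along three strands is automatically non-elliptic is false in general (faces straddling the cut can be digons or squares), though that route is not needed.
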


\begin{proof}
The strings in \autoref{eq:growth} are admissible growth
steps on $\wordstart$, $AAA\,\wordmiddle$ and $AAA\,\wordend$. Applying them
successively from the prefix to the tail reduces $c_m$ to the empty
boundary. The growth algorithm therefore identifies the assembled
web with $W_{c_m}$ \cite[Section 5, Lemmas 1--3]{KK}.
\end{proof}

The main proof identifies $\beta_m$ directly by its tensor congruence
and does not depend on this raw web identification. 
In the notation $\startmap\middlemap^m\finishmap$, each new block acts only on
the three reserved strands; identity maps on the external strands
are understood.

\begin{example}\label{ex:assembly}
Start with $\finishmap$, attach one copy of $\middlemap$, attach a second copy, and
close the three reserved outputs with $\startmap$; see \autoref{fig:assembly}.
The blocks are attached from the tail towards the prefix.
The intermediate diagrams have $9,18,27$ outputs, of which three
are still reserved. The final diagram has $30$ external outputs.
Their order is
\[
\underbrace{1,\ldots,6}_{\wordstart}\quad
\underbrace{7,\ldots,15}_{\wordmiddle}\quad
\underbrace{16,\ldots,24}_{\wordmiddle}\quad
\underbrace{25,\ldots,30}_{\wordend}.
\]
This constructs $W_{c_2}$. The tensor giving
$\mu=2$ is obtained by adding the correction in
\autoref{def:corrected}; the raw web still has the forbidden
$\tpar^{-1}$ coordinate in \autoref{eq:raw-two-coordinate}.
\end{example}

\begin{figure}[ht]
\centering
\begin{tikzpicture}
\node[inner sep=0,anchor=south west] (assembly) at (0,0)
  {\includegraphics[width=.96\linewidth]{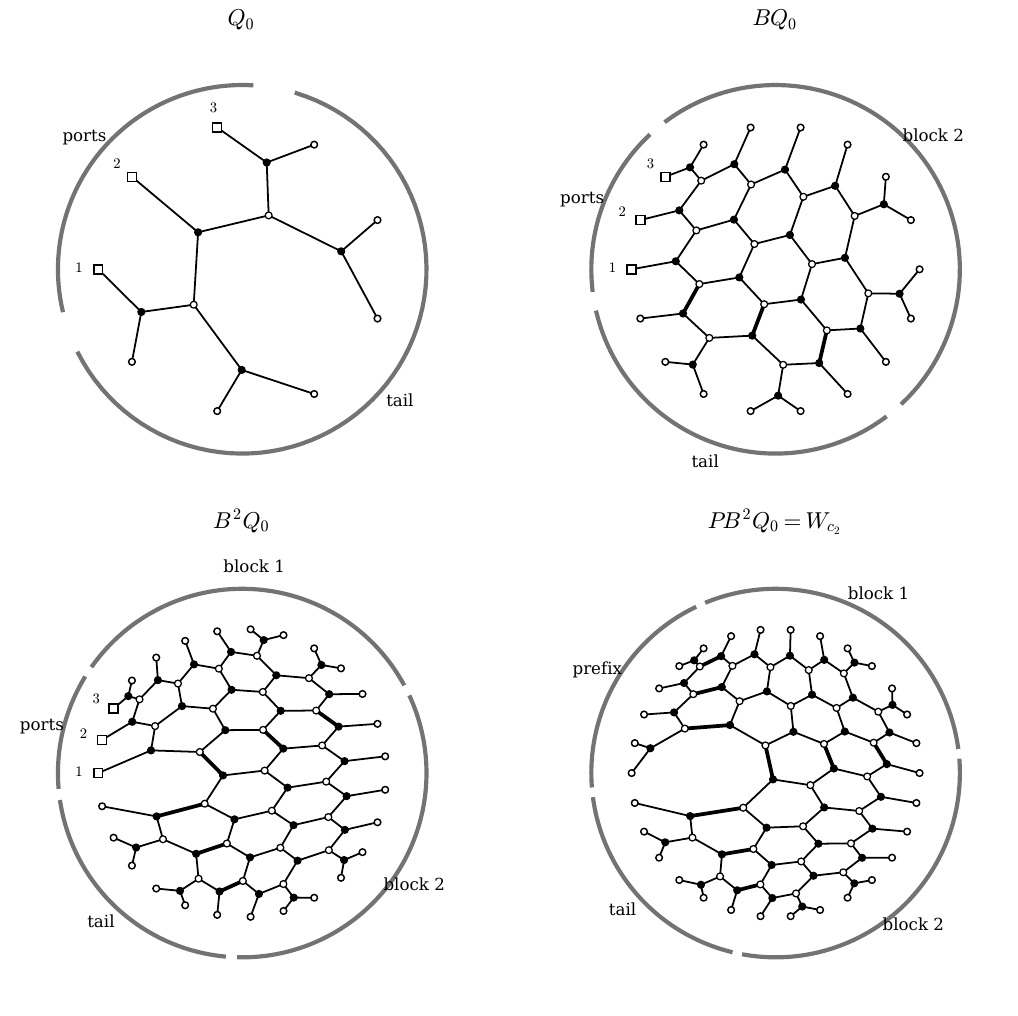}};
\begin{scope}[x={(assembly.south east)},y={(assembly.north west)}]
\fill[white] (0,.963) rectangle (1,1);
\fill[white] (0,.465) rectangle (1,.51);
\node[anchor=north,inner sep=0,font=\normalsize] at (.2365,.994)
  {$\finishmap$};
\node[anchor=north,inner sep=0,font=\normalsize] at (.7609,.994)
  {$\middlemap\finishmap$};
\node[anchor=north,inner sep=0,font=\normalsize] at (.2365,.502)
  {$\middlemap^2\finishmap$};
\node[anchor=north,inner sep=0,font=\normalsize] at (.7609,.502)
  {$\startmap\middlemap^2\finishmap=W_{c_2}$};
\end{scope}
\end{tikzpicture}
\caption{The assembly $\finishmap\longmapsto \middlemap\finishmap\longmapsto \middlemap^2\finishmap
\longmapsto \startmap\middlemap^2\finishmap$. The boundary arcs group the external outputs.
Thicker edges are the three-strand joins; they are ordinary web edges.
Square boundary markers denote the three outputs still to be joined.}
\label{fig:assembly}
\end{figure}

\subsection{Iteration}

The iteration is shown in \autoref{fig:interface}.

\begin{definition}\label{def:corrected}
Let $\mathcal E=V^{\otimes3}\oplus\one$. Its standard basis consists
of the $27$ three-letter states and a scalar state $e_*=(0,1)$.
With the maps just defined, put
\begin{equation}\label{eq:maps}
\pp=(\startmap,0),\qquad
T=\begin{pmatrix}\middlemap&-\createmap\\\removemap&-d\tau^3\end{pmatrix},\qquad
\tailmap=\binom{\finishmap}{\tau^2}.
\end{equation}
Their types are
\[
\pp:\mathcal E\to V^{\otimes6},\quad
T:\mathcal E\to\mathcal E\otimes V^{\otimes9},\quad
\tailmap:\one\to\mathcal E\otimes V^{\otimes6}.
\]
Define $\tailmap_0=\tailmap$ and, with $I_j$ the identity on $V^{\otimes j}$,
\begin{equation}\label{eq:iterate}
\tailmap_{k+1}=(T\otimes I_{9k+6})\tailmap_k\quad(k\ge0),
\qquad \beta_m=(\pp\otimes I_{9m+6})\tailmap_m.
\end{equation}
Thus $\beta_m:\one\to V^{\otimes(12+9m)}$ is an invariant tensor.
\end{definition}

\begin{figure}[ht]
\centering
\begin{tikzpicture}[x=1cm,y=1cm,font=\small,every node/.append style={text=black}]
\draw[black] (0,0) rectangle (1.1,1.0) node[pos=.5] {$\pp$};
\draw[black] (2.2,0) rectangle (3.3,1.0) node[pos=.5] {$T$};
\draw[black] (4.4,0) rectangle (5.5,1.0) node[pos=.5] {$T$};
\draw[black] (6.6,0) rectangle (7.7,1.0) node[pos=.5] {$\tailmap$};
\draw[black,<-] (1.1,.5)--(2.2,.5);
\draw[black,<-] (3.3,.5)--(4.4,.5);
\draw[black,<-] (5.5,.5)--(6.6,.5);
\node at (1.65,.8) {$\mathcal E$};\node at (3.85,.8) {$\cdots$};
\node at (6.05,.8) {$\mathcal E$};
\draw[black] (.55,0)--(.55,-.65) (2.75,0)--(2.75,-.65)
(4.95,0)--(4.95,-.65) (7.15,0)--(7.15,-.65);
\node[below] at (.55,-.65) {$V^{\otimes6}$};
\node[below] at (2.75,-.65) {$V^{\otimes9}$};
\node[below] at (4.95,-.65) {$V^{\otimes9}$};
\node[below] at (7.15,-.65) {$V^{\otimes6}$};
\end{tikzpicture}
\par\medskip
\begin{tabular}{c|cc}
$T$&input: three strands&input: no strands\\\hline
output: three strands&$\middlemap$&$-\createmap$\\
output: no strands&$\removemap$&$-d\tau^3$
\end{tabular}
\caption{Composition through the $28$-dimensional connecting
space $\mathcal E$. Attachments run from the tail on the right to the
prefix on the left; external outputs are ordered from left to right.
The table shows the four transitions in $T$. Each transition creates
nine external outputs. The no-strand case is the scalar summand
$\one$, which adds no boundary point.}
\label{fig:interface}
\end{figure}

The map $\removemap$ takes the three connecting strands to nine external
outputs, reserving no strands for the next attachment. The map $-\createmap$
starts with no connecting strands and creates three reserved and nine
external outputs. The map $-d\tau^3$ keeps the no-strand state and adds
nine external outputs. Every term of the final tensor still has
the same $12+9m$ positive boundary points.
The maps $\pp$ and $\tailmap$ supply the prefix and tail.

We abbreviate the composition in \autoref{eq:iterate} by
$\pp T^m\tailmap$, with identity maps on previously created external
strands understood. Its output order is the six-strand prefix,
$m$ nine-strand blocks, and the six-strand tail. All coordinates
are integral Laurent polynomials.

We next prove that these correction maps remove all unwanted constant
and negative powers of $\tpar$, so that $\beta_m=b_{c_m}$.

\section{Canonicality from local degree bounds}\label{sec:canonicality}

By \autoref{thm:canonical}, we must show that $\beta_m$ is bar invariant
and congruent to $e_{c_m}$ modulo $\tpar$ in the standard tensor lattice.
The main work is to control the degrees of all tensor coordinates
at once.

We change coordinates on $\mathcal E$ to replace the prefix $\pp$,
block $T$ and tail $\tailmap$ by $\widehat p,\widehat T,\widehat\rho$,
defined in \autoref{loc:gauge}. The final tensor is unchanged:
{
\[
\widehat p\,\widehat T^{\,m}\widehat\rho=\pp T^m\tailmap=\beta_m.
\]
}
We will show that these transformed maps have coefficients in $R=\Z[\tpar]$.
In degree zero, $\widehat\rho$ gives $AAA\otimes e_{\wordend}$,
each $\widehat T$ preserves $AAA$ and adds $\wordmiddle$, and
$\widehat p$ replaces $AAA$ by $\wordstart$. Composing these terms gives
$e_{c_m}$ for every $m$.

\subsection{The local paths}

We begin with the degree estimates for the small pieces of a block.
For the local bounds here and in \autoref{sec:readout}, degree means
degree in $\vpar$. In particular,
\[
\deg_{\vpar}(\tpar^k)=-k,\qquad \deg_{\vpar}0=-\infty.
\]
An integral Laurent polynomial lies in $R$ exactly when its degree
is at most zero; it lies in $\tpar R$ when that degree is strictly negative.
For a map with entries in $R$, write $M[0]$ for its matrix of constant coefficients.

Words in the local tables abbreviate standard tensors. We also write
$|s\rangle=e_s$ and use $\langle s|$ for the corresponding coordinate
functional, and $[a<b]=\delta_{a<b}$ for indicators.

Keep $\mathsf R_i=H_{i+1}Y_i$ and $\mathsf S=H_2H_1Y_0$ from
\autoref{sec:localmaps}, and put $\mathsf L_i=H_iY_{i+1}$.
We also use these growth abbreviations for their upward tensor maps;
then $Y_i$ is a split and $a_i$ is a cup. A path 
chooses one monomial from each local coefficient; its
degree is the sum of their degrees. The elementary tables show that
all coefficients of an individual web are nonnegative and that the
only branch of positive $\vpar$-degree of $H$ is $AC\mapsto \tpar^{-1} AC$.
Thus the degree of a nonzero web coordinate is the largest
degree of its compatible paths.

\begin{lemma}\label{loc:nonpositive}
Every path through $y$, a cup, $\mathsf R_i$, or $\mathsf L_i$ has degree at most
zero. In $\mathsf S$, the only path of positive $\vpar$-degree is
\[
 ABC\xrightarrow{Y_0,0}ABBC
 \xrightarrow{H_1,0}ACAC\xrightarrow{H_2,1}ACAC.
\]
Its degree and multiplicity are one. Furthermore
\begin{equation}\label{loc:Srow}
 \langle ACAC|\mathsf S=\tpar^{-1}\langle ABC|+\langle ACB|+\langle BAC|,
 \qquad \mathsf S(AAA)=AAAB+\tpar BAAA.
\end{equation}
\end{lemma}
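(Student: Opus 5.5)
The plan is a direct inspection of the elementary coordinate tables in \autoref{eq:localmaps} and the table for $H$, tracking $\vpar$-degrees. First, for the individual generators: every coefficient of the split $y$ is $1$ or $\tpar$, so of $\vpar$-degree $0$ or $-1$. Every coefficient of the upward cup $\sigma'$ is $1,\tpar,\tpar^2$, so again of degree at most zero. Among the entries of $H$, the only coefficient with a negative power of $\tpar$ is $\tpar^{-1}$ in $H(AC)$, i.e.\ the branch $AC\mapsto\tpar^{-1}AC$ of degree one. Since path degrees add, any path through $y$ or a cup has degree at most zero.

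For $\mathsf R_i=H_{i+1}Y_i$ and $\mathsf L_i=H_iY_{i+1}$, a path can only gain degree at the $H$ step. This requires the two strands acted on by $H$ to carry $AC$ after the split. In $\mathsf R_i$, the split $y$ at position $i$ outputs one of $AB,BA,AC,CA,BC,CB$ on positions $i,i+1$, and $H_{i+1}$ acts on positions $i+1,i+2$. To enter the $AC$ branch we need the second output of $y$ to be $A$. This happens only in $y(A)\ni\tpar BA$ and $y(B)\ni\tpar CA$, each with coefficient $\tpar$, of degree $-1$. The total is therefore at most $-1+1=0$. The case $\mathsf L_i$ is symmetric: $H$ now sees the first split output, which must be $C$. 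The only $y$ outputs starting with $C$ are $\tpar CA$ and $\tpar CB$, again carrying $\tpar$. This settles the first assertion.

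For $\mathsf S=H_2H_1Y_0$, I enumerate paths with two $H$ steps. A positive total needs at least one $\tpar^{-1}$ branch that is not compensated by a $\tpar$. I split into cases by which $H$ step uses the $AC$ branch.
\begin{enumerate}
\item If $H_1$ uses it, the output of $Y_0$ at position $1$ must be $A$. This forces a $\tpar$ from $y$, as above, so $H_2$ must then also use the $AC$ branch. But after $H_1(AC)=AC$, position $2$ carries $C$, so position $3$ would have to be\dots{} the $AC$ branch requires $A$ at position $2$, which is a contradiction.
\item If only $H_2$ uses it, the path reads $y$ with coefficient $1$, then $H_1$ with a constant branch, then $H_2$ on $AC$.
\end{enumerate}
Checking the constant branches of $H$ whose output has $A$ in the second slot leaves $H(BB)\ni AC$ (and $H(CA)\ni BB$, which is not relevant). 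The degree-zero $y$ outputs with $B$ in position $1$ are $y(A)=AB$ and $y(B)\ni AC$, and the input at position $2$ must be $C$. This pins down $ABC\to ABBC\to ACAC\to ACAC$. Its multiplicity is one because each step is a single monomial.

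The row formula $\langle ACAC|\mathsf S$ then follows by listing all preimages of $ACAC$ under $H_2$, namely $\tpar^{-1}ACAC$ and $ACBB$ at positions $2,3$. I then take preimages under $H_1$ and $Y_0$, collecting the three surviving terms $\tpar^{-1}\langle ABC|$, $\langle ACB|$ and $\langle BAC|$. Finally, $\mathsf S(AAA)$ is a forward computation: $y(A)=AB+\tpar BA$, then $H$ fixes $BA\mapsto AB$ type entries via the table. The main obstacle is purely bookkeeping: getting the position conventions of $H_i$ (and whether $H$ acts on the table's ordered pair) right in the case analysis. I would double-check the case analysis against the table rather than trust symmetry.
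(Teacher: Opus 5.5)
Your strategy is the same as the paper's. The $\mathsf R_i$/$\mathsf L_i$ argument is identical. Your case (a) is what the paper gets for free from $\mathsf S=H_2\mathsf R_0$: a positive path needs the degree-one branch at $H_2$ and a degree-zero path through $\mathsf R_0$, and $AC\mapsto\tpar^{-1}AC$ has $C$ on the right. Your preimage computation of $\langle ACAC|\mathsf S$ and the forward computation of $\mathsf S(AAA)$ are correct.

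The enumeration in case (b), however, is misstated and needs repair before it proves uniqueness.

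\begin{enumerate}
\item The degree-zero branches of $H$ with $A$ in the right output slot are $AA\mapsto AA$, $AB\mapsto BA$ and $BB\mapsto CA$. The relevant branch is $BB\mapsto CA$, not $BB\mapsto AC$. The term $BB$ of $H(CA)$ that you mention is irrelevant, and the branch $CA\mapsto\tpar CA$ has degree $-1$.
\item You must exclude $AA\mapsto AA$ and $AB\mapsto BA$ explicitly. Their left input letter is the right output of the split $Y_0$, which would then be $A$. That costs degree $-1$ at $y$, contradicting the degree-zero requirement of case (b).
\item The only degree-zero split output with $B$ on the right is $y(A)\ni AB$. The term $AC$ of $y(B)$ has $C$ there, so it does not qualify.
\end{enumerate}

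With these fixes, the three input letters are forced to be $A$, then $B$ (the right input of $H_1$), then $C$ (the right input of $H_2$). This gives the unique path $ABC\to ABBC\to ACAC\to ACAC$, as you claim.
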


\begin{proof}
The assertions for $y$ and a cup follow from their tables. In
$\mathsf R$, a degree-one branch of $H$ requires the right output of
the preceding $y$ to be $A$. That branch of $y$ has degree $-1$, so
the total is at most zero. In $\mathsf L$, the same argument uses the
left output $C$ of $y$.

A path of positive degree through $\mathsf S$ must use the degree-one term
of the last $H$ and a degree-zero path through the preceding
$\mathsf R$. Thus the first $H$ must output $A$ on its right.
The possibilities are $AA\to AA$, $AB\to BA$, $BB\to CA$ and
$CA\to CA$. The first two require a preceding split of degree $-1$;
the last already has degree $-1$. Hence only $BB\to CA$ remains.
Its preceding split is $A\to AB$, and the last $H$ forces the third
input to be $C$. This is the displayed path.

For output $ACAC$, the complete three paths are
\[
\begin{array}{c|ccc|r}
 &\text{after }Y_0&\text{after }H_1&\text{after }H_2&\deg_{\vpar}\\\hline
ABC&ABBC&ACAC&ACAC&1\\
ACB&ABCB&ACBB&ACAC&0\\
BAC&ACAC&ACAC&ACAC&0
\end{array}.
\]
They exhaust the choices in the $H$ table. The formula at $AAA$
follows by applying that table to the two terms of $y(A)$.
\end{proof}

In the notation above, the block in \autoref{eq:growth} has downward string
$\middlemap:Y_2Y_6Y_7\mathsf R_1\mathsf S
\mathsf R_2\mathsf L_3\mathsf R_1\mathsf S$.
The only possible degree increases in $\middlemap$ therefore come from its two
occurrences of $\mathsf S$. To study the part between them, put
\[
X_4=V^-\otimes V^+\otimes V^+\otimes V^-,\qquad
X_7=V^{---+--+},\qquad X_8=V^{+--++--+}.
\]
Here $V^S$ denotes the signed tensor product defined in
\autoref{sec:webs}.
Let $M_{\mathrm{loc}}:X_4\to X_7$ have downward string
$\mathsf R_2\mathsf L_3\mathsf R_1$, and set
\[
N_{\mathrm{loc}}=(\mathsf S\otimes I_{V^{+--+}})
M_{\mathrm{loc}}\mathsf S:V^{\otimes3}\to X_8.
\]
The rightmost $\mathsf S$ has positive inputs and the leftmost has
negative inputs. The map $O:X_8\to V^{\otimes12}$ already defined in
\autoref{sec:localmaps} has string $Y_2Y_6Y_7\mathsf R_1$.
Expanding the growth strings gives
$\middlemap=ON_{\mathrm{loc}}$ and $\createmap=O\kappa_8$. This factorization lets us locate
the term canceled by the correction $\createmap$.

\subsection{An integral coordinate system}\label{loc:gauge}

The block $T$ has some positive $\vpar$-degrees, so reducing it
modulo $\tpar$ is not yet possible. We change coordinates on $\mathcal E$
so that the prefix $\pp$, block $T$ and tail $\tailmap$ become maps
$\widehat p,\widehat T,\widehat\rho$ with entries in $R$.

For a three-letter state $s$, define
\[
h_s=[s\in\{ABC,ACB,BAC\}],\quad
\Delta=\operatorname{diag}(\tpar^{-h_s}),\quad
\mathsf J=\begin{pmatrix}I&0\\-\tpar\eta&1\end{pmatrix},\quad
G=\operatorname{diag}(\Delta,-1)\mathsf J.
\]
Put $\widehat T=(G\otimes I_9)TG^{-1}$,
$\widehat p=\pp G^{-1}$, and
$\widehat\rho=(G\otimes I_6)\tailmap$.
With the identity maps on exterior strands understood, adjacent factors
$G^{-1}G$ cancel in $\widehat p\,\widehat T^{\,m}\widehat\rho$, giving $\beta_m$ as above.
We use $G$ only to estimate coordinates.

Here is the reason for the choice of $G$. The map $N_{\mathrm{loc}}$
has one degree-two term, at input $ABC$ and output $(AC)^4$.
The tensor $\kappa_8$ has coefficient $1$ at $(AC)^4$, and
$\eta(ABC)=\tpar^{-3}$. Hence $\tpar\kappa_8\eta$ has exactly the same degree-two
term. The triangular matrix $\mathsf J$ introduces this cancellation, while
$\Delta$ rescales the three states $ABC,ACB,BAC$ to compensate for
the remaining positive degrees. Negating the scalar 
coordinate makes the two constant transitions in
\autoref{sec:readout} positive.

\begin{lemma}\label{loc:outer}
Every coefficient of $(\Delta\otimes I_9)O$ belongs to $R$.
\end{lemma}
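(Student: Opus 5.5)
We need to show that every coefficient of $(\Delta\otimes I_9)O$ lies in $R$. The map $O$ has twelve positive outputs and downward string $Y_2Y_6Y_7\mathsf R_1$, with input space $X_8=V^{+--++--+}$. Upward, it first applies $\mathsf R_1=H_2Y_1$ to the leftmost part and then the three splits $Y_7,Y_6,Y_2$.

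The plan is a path-degree argument using \autoref{loc:nonpositive}. A coordinate of $(\Delta\otimes I_9)O$ at output $s\otimes u$, with $s$ a three-letter state on the first three outputs, equals $\tpar^{-h_s}$ times the corresponding coordinate of $O$. All web coefficients are nonnegative, so there is no cancellation, and it suffices to bound the $\vpar$-degree of every compatible path.

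\begin{enumerate}
\item \emph{General bound.} By \autoref{loc:nonpositive}, every path through the splits and through $\mathsf R_1$ has degree at most zero. So every coordinate of $O$ already lies in $R$, and the claim is automatic whenever $h_s=0$.
\item \emph{Where the first three outputs come from.} I will trace the strings to see which local maps produce outputs $1,2,3$. Of the splits, only the final $Y_2$ touches them: renumbered from zero, it produces outputs $2,3$ from one strand. Output $1$ comes from the $\mathsf R_1$ region, via the last $H$ acting there.
\item \emph{The three bad states.} Now take $s\in\{ABC,ACB,BAC\}$, so $\Delta$ contributes degree $+1$. I need every path with that output prefix to have degree at most $-1$. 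For $ABC$ and $ACB$, the split $Y_2$ outputs $BC$ or $CB$ from input $C$. The branch $y(C)=BC+\tpar CB$ has degree $0$ for $BC$ and $-1$ for $CB$. So $ACB$ is fine immediately. For $ABC$ and $BAC$, the needed extra $-1$ must come from the $\mathsf R_1$ part: a degree-zero path there must avoid certain outputs, as in the proof of \autoref{loc:nonpositive}. In that proof, a degree-$0$ path through $\mathsf R$ requires the preceding split to contribute degree $0$, and this forces a specific ordering of the letters. I will tabulate the few possible local states of $\mathsf R_1$ feeding outputs $1,2$ (or $1$ and the input of $Y_2$). Using the $H$ table and $y(a)$, I will check that producing $A$ at output $1$ followed by the required letter costs degree at least $1$.
\end{enumerate}

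The main obstacle is step (c): pinning down exactly which strands of $\mathsf R_1$ and $Y_2$ feed outputs $1,2,3$ after the zero-based renumbering of the growth string, and then running the small case check. If the direct tracing becomes messy, my fallback is to enumerate, by computer or by hand, all paths ending in these three states through the at most three local maps involved. Each such case should reduce to an inequality like the $ACAC$ table in \autoref{loc:nonpositive}.
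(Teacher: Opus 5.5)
Your overall plan matches the paper's: bound the $\vpar$-degree of every compatible path, use \autoref{loc:nonpositive} for states with $h_s=0$, and show that the three states with $h_s=1$ admit only paths of degree at most $-1$. The gap is in the strand bookkeeping of steps (b) and (c), on which your whole case analysis rests. Reading $Y_2Y_6Y_7H_2Y_1$ downward, $Y_2$ merges the (one-based) outputs $3$ and $4$. So, upward, the outputs come from the following places:
output $1$ is an input letter untouched by every operation; output $2$ is the left output of the split $Y_1$ inside $\mathsf R_1$; and output $3$ is the left output of the final split $Y_2$. That split is applied to the left output of $H$, and its right output is output $4$, which lies outside $s$. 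In particular, $Y_2$ does not produce the last two letters of $s$, and no letter of $s$ is an output of $H$. Your claim that $ACB$ is ``fine immediately'' therefore has no justification. Its third letter $B$ is the left output of the final split, and this is available in degree $0$ from $y(C)=BC+\tpar CB$.

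With the correct picture the cases are reversed. For $ABC$ and $BAC$, the third letter $C$ is the left output of the final split. This forces a degree $-1$ branch, $\tpar CA$ or $\tpar CB$, and the rest of the path is nonpositive by \autoref{loc:nonpositive}, so nothing about $\mathsf R_1$ needs to be tabulated. The genuinely delicate case is $ACB$, which you set aside. Its second letter $C$ is the left output of the first split and costs $-1$, but the following $H$ can regain $+1$ through its unique positive branch $AC\mapsto\tpar^{-1}AC$. (If $H$ uses any other branch, the first split alone already gives $-1$.) The missing step is that this positive branch feeds $A$ into the final split. There the left letter $B$ forces the branch $\tpar BA$ of $y(A)=AB+\tpar BA$, costing $-1$ again, so the total is still at most $-1$. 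Your proposed check (``producing $A$ at output $1$ followed by the required letter'') is aimed at the wrong cases and at a wrong model of the web, so as written it would not establish the lemma.
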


\begin{proof}
Only the first three output letters determine the shift $h_s$.
The two splits at positions six and seven have nonpositive degree
and do not affect those letters.
The relevant local piece is $\mathsf R$ followed by $y$ on its second
output, with an unchanged letter $a$ on the left. Write the first three
final letters as $apr$. If this is $ABC$ or $BAC$, then $r=C$ is the
left output of $y$, forcing degree $-1$ at that $y$;
the preceding $\mathsf R$ has degree at most zero by
\autoref{loc:nonpositive}. If it is $ACB$,
then $p=C$ forces degree $-1$ at the first $y$ inside $\mathsf R$.
An $H$ of degree one could cancel this only via $AC\to AC$;
the last $y$ then has input $A$, and producing $r=B$ forces its
degree $-1$ branch. Thus every output with $h_s=1$ has degree at most $-1$ before
rescaling. For outputs with $h_s=0$, \autoref{loc:nonpositive}
applies directly.
\end{proof}

\begin{lemma}\label{loc:offender}
The only monomial of $N_{\mathrm{loc}}$ with degree exceeding $h_s$ at input $s$
is
\[
 \tpar^{-2}\,|ACACACAC\rangle\langle ABC|,
\]
with coefficient one.
\end{lemma}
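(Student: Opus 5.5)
We argue by path analysis, as in \autoref{loc:nonpositive}. Recall that $N_{\mathrm{loc}}=(\mathsf S\otimes I_{V^{+--+}})M_{\mathrm{loc}}\mathsf S$ and that $M_{\mathrm{loc}}$ has downward string $\mathsf R_2\mathsf L_3\mathsf R_1$. By \autoref{loc:nonpositive}, every path through each $\mathsf R_i$ and $\mathsf L_i$ has degree at most zero. So only the two copies of $\mathsf S$ can contribute positive degree, and each contributes at most one, via the unique path $ABC\to ACAC$ with $\tpar^{-1}$. The $\vpar$-degree of any path through $N_{\mathrm{loc}}$ is therefore at most the number of copies of $\mathsf S$ traversed along their special path. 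In particular, every monomial has degree at most $2$. A monomial of degree at least $1$ needs at least one special $\mathsf S$-path, with the rest of the path of degree zero.

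The plan is to separate cases by the input $s$. If $h_s=0$, meaning $s\notin\{ABC,ACB,BAC\}$, then the first $\mathsf S$ has no positive path, since the special path starts at $ABC$. A monomial of degree $\ge1$ would then have to use the second $\mathsf S$ on its special path $ABC\to ACAC$ (with signs reversed, as its inputs are negative), after a degree-zero path through $\mathsf S M_{\mathrm{loc}}$. If $h_s=1$, a monomial exceeding $h_s$ needs degree $2$. That forces $s=ABC$, both $\mathsf S$ on their special paths, and $M_{\mathrm{loc}}$ on a degree-zero path from $ACAC$ (read in $X_4$) to an $X_7$-state whose first three letters are the input of the second $\mathsf S$, namely $ABC$.

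The degree-two case is the easier one. Tracing $ACAC$ through $\mathsf R_1$, $\mathsf L_3$, $\mathsf R_2$ and keeping only degree-zero branches via the $y$ and $H$ tables, we would check that exactly one path reaches an output whose relevant three letters are $ABC$ and whose remaining letters give $ACAC\ldots$ after the second $\mathsf S$. This should produce the output $(AC)^4$ with multiplicity one, giving the monomial $\tpar^{-2}|ACACACAC\rangle\langle ABC|$. The other inputs with $h_s=1$ ($ACB$, $BAC$) use a degree-zero first $\mathsf S$, so they can reach at most degree $1=h_s$. They are not offenders.

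The main obstacle is the case $h_s=0$: we must show there is no degree-one monomial. We need every degree-zero path through $M_{\mathrm{loc}}\mathsf S$, starting from a non-special input, that feeds $ABC$ into the second $\mathsf S$, to have negative degree somewhere. The first thing I would try is a constraint-propagation argument like the proof of \autoref{loc:outer}. Requiring the three letters $ABC$ at the outputs of $\mathsf R_2\mathsf L_3$ forces specific branches, and the $C$ output of the left $y$ in $\mathsf L$, or the $A$ right output in $\mathsf R$, forces a degree $-1$ branch. Any compensation would then have to come from the degree-one $H$ branch $AC\to AC$, and this in turn forces a preceding degree-$(-1)$ split. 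Pushing this back through the first $\mathsf S$ via \eqref{loc:Srow} should show that the only way to reach degree $\ge1$ is to start at $ABC$. If the hand case analysis grows too large, I would fall back on enumerating all paths of $N_{\mathrm{loc}}$ (only $27$ inputs), as in the Lean check.
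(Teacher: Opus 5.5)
Your framework is the paper's. Only the two copies of $\mathsf S$ can raise the degree, each by at most one. So an offender is either a degree-two term at $s=ABC$, or a degree-one term at an input with $h_s=0$; in the latter case the first $\mathsf S$ and $M_{\mathrm{loc}}$ are traversed in degree zero and the second $\mathsf S$ is fed $ABC$.

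The gap is that you stop where the content of the lemma begins. The degree-two uniqueness is left as ``we would check''. The $h_s=0$ case, which you call the main obstacle, is only sketched, with enumeration as a fallback. The missing idea is to propagate constraints \emph{backwards} from the output of $M_{\mathrm{loc}}$, not forwards from $ACAC$. Requiring that output to begin with $ABC$ in degree zero forces every local choice:
\begin{enumerate}
\item In the last piece $\mathsf R_2$, the split must put $C$ on its left, which costs degree $-1$. So its $H$ must use the branch $AC\mapsto\tpar^{-1}AC$, forcing input $BC$ and output $CAC$.
\item The preceding $\mathsf L_3$ must then output $C$ first in degree zero. Only $y(C)=BC$ followed by the branch $BB\mapsto CA$ of $H$ does this, again giving input $BC$ and output $CAC$.
\item Finally $\mathsf R_1$ must output $BBB$. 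Its only degree-zero route is $y(C)=BC$ and $CA\mapsto BB$, starting from $CA$; the first letter $A$ is unchanged.
\end{enumerate}
Hence the unique such path is $ACAC\to ABBBC\to ABBCAC\to ABCACAC$, with coefficient one. This holds regardless of how the input $ACAC$ was produced.

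This one computation settles both cases. For $s=ABC$ it gives the degree-two monomial $\tpar^{-2}|ACACACAC\rangle\langle ABC|$ with multiplicity one. For $h_s=0$ it shows the first $\mathsf S$ would have to output $ACAC$ in degree zero. By \eqref{loc:Srow}, the covector $\langle ACAC|\mathsf S$ is supported only on $ABC,ACB,BAC$, so no such path exists.

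Also, your anticipated conclusion that ``the only way to reach degree $\ge1$ is to start at $ABC$'' is false. The inputs $ACB$ and $BAC$ reach $ACAC$ with coefficient one, and then follow the forced middle path and the positive branch of the second $\mathsf S$. So they carry genuine degree-one terms $\tpar^{-1}|ACACACAC\rangle$, harmless only because they do not exceed $h_s=1$. What you actually need is that degree $\ge1$ forces $s\in\{ABC,ACB,BAC\}$.
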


\begin{proof}
Work backwards along a degree-zero path through $M_{\mathrm{loc}}$
whose output starts with $ABC$. This forces
\begin{equation}\label{loc:middlepath}
ACAC\xrightarrow{\mathsf R_1}ABBBC
\xrightarrow{\mathsf L_3}ABBCAC
\xrightarrow{\mathsf R_2}ABCACAC.
\end{equation}
Indeed, the last $\mathsf R$ must have its first output $C$. Its $y$ has
degree $-1$ and its $H$ must be $AC\to \tpar^{-1} AC$, forcing input $BC$
and output $CAC$. The preceding $\mathsf L$ must have first output $C$;
in degree zero this forces input $BC$, $y(C)=BC$ and $H(BB)=CA$,
again giving $CAC$. The first $\mathsf R$ must now output $BBB$;
its only degree-zero route starts at $CA$ and uses $y(C)=BC$,
$H(CA)=BB$. The unchanged first letter is $A$.

Both copies of $\mathsf S$ have degree at most one. A degree-two path
must use their positive branches and the degree-zero middle path,
which gives exactly the stated term. For $s\notin\{ABC,ACB,BAC\}$,
a positive path would have to use degree zero in the first $\mathsf S$
and in $M_{\mathrm{loc}}$, followed by the positive second $\mathsf S$.
\autoref{loc:middlepath} then requires the first $\mathsf S$ to end at
$ACAC$ in degree zero, excluded by \autoref{loc:Srow}. For $ACB,BAC$, a degree-one term does not exceed $h_s$. For $ABC$,
only degree two exceeds $h_s$. These cases cover all input states.
\end{proof}

\begin{lemma}\label{loc:upper}
Every entry of
\[
\bigl(\Delta(\middlemap-\tpar\createmap\eta)\Delta^{-1},\Delta \createmap\bigr)
\]
belongs to $R$.
\end{lemma}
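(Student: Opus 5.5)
Both entries factor through $O$, so I will reduce everything to \autoref{loc:outer} and \autoref{loc:offender}. We have $\middlemap=ON_{\mathrm{loc}}$ and $\createmap=O\kappa_8$. Hence
\[
\Delta(\middlemap-\tpar\createmap\eta)\Delta^{-1}
=\bigl((\Delta\otimes I_9)O\bigr)\bigl(N_{\mathrm{loc}}-\tpar\kappa_8\eta\bigr)\Delta^{-1},
\qquad
\Delta\createmap=\bigl((\Delta\otimes I_9)O\bigr)\kappa_8 .
\]
By \autoref{loc:outer}, the left factor $(\Delta\otimes I_9)O$ has entries in $R$. So it suffices to show that $(N_{\mathrm{loc}}-\tpar\kappa_8\eta)\Delta^{-1}$ and $\kappa_8$ have entries in $R$. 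Right multiplication by $\Delta^{-1}$ multiplies the column of input $s$ by $\tpar^{h_s}$. The condition is therefore that every coordinate of $N_{\mathrm{loc}}-\tpar\kappa_8\eta$ at input $s$ has $\vpar$-degree at most $h_s$.

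\emph{Step 1: $\kappa_8$.} It is a tensor product of four cups. By \autoref{eq:localmaps}, or its sign-reversed version, each cup coefficient is one of $1,\tpar,\tpar^2$. Every entry of $\kappa_8$ therefore lies in $R$, which settles the second component.

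\emph{Step 2: the correction term.} By \autoref{eq:tripod}, $\tpar\eta$ has coordinates $\tpar^{-2},\tpar^{-1},\tpar^{-1},1,1,\tpar$ at $ABC,ACB,BAC,BCA,CAB,CBA$, and $0$ elsewhere. Hence $\tpar\kappa_8\eta$ has degree at most $2$ at input $ABC$, at most $1$ at $ACB$ and $BAC$, and at most $0$ at the other inputs. This is within the bound $h_s$ except at input $ABC$. There the only excess terms are $\tpar^{-2}$ times the constant-term part of $\kappa_8$. Comparing with the cup coefficient table, I expect the only constant coordinate of $\kappa_8$ to be at $(AC)^4$, with coefficient $1$, exactly as asserted in the motivation for $G$. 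This must be checked against the signs $+--++--+$ and the sign-reversed cup conventions: each cup contributes $AC$ with coefficient $1$ and its other states with positive $\tpar$-powers. The degree-one part of $\tpar\kappa_8\eta$ at input $ABC$, namely $\tpar^{-2}$ times the $\tpar$-linear coordinates of $\kappa_8$, has degree $1=h_{ABC}$ and is harmless.

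\emph{Step 3: cancellation.} By \autoref{loc:offender}, the only monomial of $N_{\mathrm{loc}}$ exceeding $h_s$ is $\tpar^{-2}|(AC)^4\rangle\langle ABC|$. By Step 2, this equals the only excess monomial of $\tpar\kappa_8\eta$, so the two cancel in $N_{\mathrm{loc}}-\tpar\kappa_8\eta$. Every remaining monomial then satisfies the bound, and the lemma follows.

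The main obstacle is Step 2's claim that the constant part of $\kappa_8$ is exactly $|(AC)^4\rangle$. The risk lies in the orientation of each cup under the mixed sign pattern, where a cup on $-+$ could place its constant coefficient on $CA$ instead. I would verify this first by writing the four cups with the sign-reversal convention. If it comes out as, say, $CA$ in some slot, the matching with \autoref{loc:offender} would have to be rechecked against the state naming used there.
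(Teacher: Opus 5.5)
Your proposal is correct and is essentially the paper's proof. You factor both entries through $O$ and invoke \autoref{loc:outer}, you bound $\tpar\kappa_8\eta$ at input $s$ by $2-\inv(s)$, and you cancel its single excess monomial against the one from \autoref{loc:offender}. Your worry about cup orientation is settled by the paper's convention that the cup formulas are unchanged under global sign reversal: the cups on $+-$ and on $-+$ both equal $AC+\tpar BB+\tpar^2CA$, so the unique constant monomial of $\kappa_8$ is $(AC)^4$ with coefficient $1$, exactly as the paper states.
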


\begin{proof}
The unique constant monomial of the four-cup tensor $\kappa_8$ is
$(AC)^4$. On a permutation $s$ of $ABC$, the tripod evaluation $\eta$ has coefficient
$\tpar^{\inv(s)-3}$. The largest degree of $\tpar\kappa_8\eta$ at input $s$ is
$2-\inv(s)$, which is at most $h_s$ except at $s=ABC$.
There its unique degree-two monomial is precisely the one in
\autoref{loc:offender}. Thus
\[
(N_{\mathrm{loc}}-\tpar\kappa_8\eta)\Delta^{-1}\in\operatorname{Mat}(R).
\]
The coefficient of the canceled term is $1$ in both maps. Since
$\middlemap=ON_{\mathrm{loc}}$, $\createmap=O \kappa_8$, and $\Delta O$ has entries in $R$
by \autoref{loc:outer}, multiplication proves both assertions.
\end{proof}

\subsection{The lower row and a capping identity}\label{loc:lower}

We next estimate the lower row of $\widehat T$. For this we use a
web identity obtained by capping the first three outputs of $\middlemap$. Its square resolutions produce $d\removemap$ and eleven further
web maps. Define their sum by $\mathcal H=\sum_{j=1}^{11}\mathcal H_j$, where
$\mathcal H_j=P_j'h_j'$ and $P_j'$ has nine positive outputs. As in the table
for $\removemap$, the last column gives the input signs of the prefix map.
\begin{center}\small
\begin{tabular}{c l c c}\toprule
 &Downward string for $P_j'$&$h_j'$&Input of $P_j'$\\\midrule
$\mathcal H_1$&$Y_1Y_3Y_4a_4\mathsf R_1$&$I$&$+++$\\
$\mathcal H_2$&$Y_1Y_3Y_4Y_3Y_2Y_1$&$I$&$+++$\\
$\mathcal H_3$&$Y_1Y_2a_2Y_2Y_1Y_1$&$\lambda_2$&$+-$\\
$\mathcal H_4$&$Y_1Y_3Y_4\mathsf R_0Y_1\mathsf R_0Y_1$&$\lambda_2E_1$&$+-$\\
$\mathcal H_5$&$Y_1a_1Y_1Y_2Y_1Y_1$&$\lambda_2$&$+-$\\
$\mathcal H_6$&$Y_0Y_3Y_4Y_3Y_3\mathsf S$&$I$&$+++$\\
$\mathcal H_7$&$Y_0Y_2a_2Y_2a_2$&$\lambda_1E_2$&$-+$\\
$\mathcal H_8$&$Y_0Y_2a_2Y_2\mathsf R_0$&$I$&$+++$\\
$\mathcal H_9$&$Y_0Y_1Y_2Y_1Y_2a_2$&$\lambda_1$&$-+$\\
$\mathcal H_{10}$&$Y_0Y_1Y_2Y_1Y_2\mathsf R_0Y_1$&$\lambda_2$&$+-$\\
$\mathcal H_{11}$&$Y_0Y_1Y_2Y_1Y_1a_1$&$\lambda_1$&$-+$\\\bottomrule
\end{tabular}
\end{center}
The last row of the table for $\removemap$ gives $D_8=\tau^3\eta$.
Set $D_0=\removemap-D_8$, and write
$\operatorname{asc}(abc)=[a<b]+[b<c]$ for the number of adjacent
ascents in a three-letter input. We first bound the degrees of
$D_0$ and $\mathcal H$, then prove the capping identities.

\begin{lemma}\label{loc:ascent}
At every exterior state $\omega$ and input $s$,
\[
\deg_{\vpar}(D_0)_{\omega,s},\ \deg_{\vpar}\mathcal H_{\omega,s}\le\operatorname{asc}(s)\le1+h_s.
\]
\end{lemma}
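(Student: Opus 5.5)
The second inequality $\operatorname{asc}(s)\le 1+h_s$ is a finite check on the $27$ three-letter states. I would do it first. If $\operatorname{asc}(s)=2$, then $a<b<c$, so $s=ABC$ and $h_s=1$. Otherwise $\operatorname{asc}(s)\le1\le1+h_s$.

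For the main bound I will argue term by term with maximal-degree paths. By the discussion before \autoref{loc:nonpositive}, every coefficient of an individual web is nonnegative. Hence the degree of a coordinate of a sum of webs is the maximum over its terms, and within one term it is the maximum over compatible paths. It therefore suffices to treat each $D_i=P_ih_i$ with $i\le7$, and each $\mathcal H_j=P_j'h_j'$ with $j\le11$, separately. For each one I will show that every path from input $s$ has $\vpar$-degree at most $\operatorname{asc}(s)$. I split each path into two parts: the small map $h_i$ (or $h_j'$) applied to $s$, and then the prefix $P_i$ (or $P_j'$) read as an upward tensor map.

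For the prefixes, \autoref{loc:nonpositive} says that every path through $y$, a cup, $\mathsf R_i$ or $\mathsf L_i$ has degree at most zero. The strings of $P_1,\dots,P_7$ and $P_1',\dots,P_{11}'$ use only these pieces, except that $P_1$ and $\mathcal H_6$ contain one $\mathsf S$ each. So every prefix contributes degree $\le0$, except through $\mathsf S$. By \autoref{loc:nonpositive}, the only positive path through $\mathsf S$ has degree $1$ and starts at input $ABC$, where $\operatorname{asc}=2$. For $D_1$ and $\mathcal H_6$ the small map is $h=I$, so the input state enters $\mathsf S$ directly. This gives degree $\le1\le\operatorname{asc}(ABC)$, and degree $\le0$ for every other input.

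It remains to bound the small maps:
\begin{itemize}
\item For $h=I$ the contribution is $0$, so the bound holds.
\item The merge $\lambda(ab)=\tpar^{-[a<b]}\nu(a,b)$ has degree $[a<b]$. So $\lambda_1$ contributes $[a<b]$ and $\lambda_2$ contributes $[b<c]$, each at most $\operatorname{asc}(s)$.
\item The map $y\lambda$ has degree exactly $[a<b]$ on the pair $ab$, because the split $y$ contributes degree $0$ or $-1$.
\end{itemize}
The delicate cases are the compositions $\lambda_2E_1$ (in $D_4$, $D_6$, $\mathcal H_4$) and $\lambda_1E_2$ (in $\mathcal H_7$). Take $\lambda_2E_1$ on $abc$. $E_1$ contributes $[a<b]$ and sends $ab$ through $\nu(a,b)$ into a pair $pq$. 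Then $\lambda_2$ acts on $(q,c)$ and contributes $[q<c]$. I must show that $[a<b]+[q<c]$ never exceeds $[a<b]+[b<c]$ once the degree-$(-1)$ branch of $y$ is taken into account.

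I will enumerate the cases. The degree-$0$ branch of $y$ gives $q=$ the larger letter of the pair $y(\nu(a,b))$, namely $B,C,C$ for $\nu=A,B,C$. The degree-$(-1)$ branch gives the smaller letter $A,A,B$. I need $[q<c]-\deg\le[b<c]$ whenever $a\ne b$, and there are only a few cases. A possible failure is $b=C$, for instance $s=ACB$ with the $\tpar$-branch $q=A$: this has degree $-1+1=0$ against $[C<B]=0$, so it is fine. The mirrored check handles $\lambda_1E_2$.

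I expect the main obstacle to be bookkeeping rather than ideas. In some prefixes a cup or an $H$ reorders letters, and one has to confirm that no $H$ degree-one branch $AC\to\tpar^{-1}AC$ survives without a compensating $y$ branch of degree $-1$. If \autoref{loc:nonpositive} does not directly cover a prefix, I would use the argument from its proof: every degree-one branch of $H$ needs an adjacent $A$ or $C$ produced by a degree-$(-1)$ split. Failing that, I would verify the roughly $18\times27$ tables mechanically, as in the Lean files.
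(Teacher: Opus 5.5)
Your proposal is correct and follows essentially the same route as the paper. All prefixes are nonpositive by \autoref{loc:nonpositive} except the two whose $\mathsf S$ acts first on $s$ (in $D_1$ and $\mathcal H_6$, both with $h=I$, bounded by $[s=ABC]$), and the small maps $I,\lambda_1,\lambda_2,\lambda_2E_1,\lambda_1E_2$ are then checked to have degree at most $\operatorname{asc}(s)$. The paper records the two possible degrees of $\lambda_2E_1$ directly as $\epsilon(a,b)+[b<c]$ and $[a<c]$ (mirrored for $\lambda_1E_2$); this is equivalent to your branch-by-branch comparison of $[q<c]$ with $[b<c]$, which does hold in every case, and your worry about stray $H$'s is moot because every $H$ in these strings sits inside an $\mathsf R_i$ or an $\mathsf S$.
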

\begin{proof}
Except for $P_1$ in $D_1$ and $P_6'$ in $\mathcal H_6$, all prefix strings
use only nonpositive $y$, cup, and $\mathsf R$ pieces. In those two
exceptions, $\mathsf S$ acts first on the original three-strand input and
the remaining pieces are nonpositive; their bound is $[s=ABC]$.
Their attached maps $h_1$ and $h'_6$ are identities, so this bound
is at most $\operatorname{asc}(s)$.

It remains to bound the small maps attached to the nonpositive
prefixes. The maps $I,\lambda_1,\lambda_2$ have degree at most
$\operatorname{asc}(abc)$ from \autoref{eq:localmaps}. For $\lambda_2E_1$, when $a\ne b$,
the two possible degrees are
\[
\epsilon(a,b)+[b<c],\qquad [a<c],\quad
\epsilon(a,b)=\begin{cases}1&a<b,\\-1&a>b.\end{cases}
\]
The first is at most $\operatorname{asc}(abc)$; the second is too, since no adjacent
ascent implies $a\ge b\ge c$. When $a=b$, the map is zero.
For $\lambda_1E_2$ the degrees are $[a<b]+\epsilon(b,c)$ and $[a<c]$,
with the same proof. Composition and summation preserve the bounds.
Finally, two strict ascents force $s=ABC$, so $\operatorname{asc}(s)\le1+h_s$.
\end{proof}

\begin{lemma}\label{loc:openidentity}
With all boundary positions fixed,
\begin{equation}\label{loc:openidentities}
\eta \middlemap=d\removemap+\mathcal H,\qquad \eta \createmap=d^2\tau^3.
\end{equation}
(The equation $\eta \createmap=d^2\tau^3$ is displayed in \autoref{loc:fun}.)
\end{lemma}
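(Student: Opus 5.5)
Both identities are equalities of web maps with fixed boundary. I would prove them by planar web reductions using the relations of \autoref{fig:relations}, and then confirm them coordinatewise.

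\emph{Second identity: $\eta\createmap=d^2\tau^3$.} This is the easier one. By \autoref{fig:correction}, $\createmap=W_{ABACBC}\otimes\tau^2$. Capping the three reserved outputs with $\eta$ touches only the six-point web $W_{ABACBC}$; the two tripods $\tau^2$ pass through unchanged. The capped picture is a six-point web with three boundary points closed off by a source vertex, leaving a single tripod on the remaining outputs $4,5,6$. This capped web has two internal faces, a digon and a square, between the cap and the internal source of $W_{ABACBC}$. I would first resolve the square; in this configuration one of its two resolutions should give a circle component or a vanishing term, and I have to check which. I would then remove the remaining digon, contributing a factor $d$. In total this should give $d^2\tau$, and tensoring with $\tau^2$ gives $d^2\tau^3$. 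As a sanity check, the $A_2$ relations and $\eta\tau=de$ from \autoref{eq:tripod} should reproduce the coefficient. I would also compare coordinates at the leading state $AAA\cdots$ using \autoref{eq:localmaps}.

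\emph{First identity: $\eta\middlemap=d\removemap+\mathcal H$.} Draw $\middlemap$ from its growth string $Y_2Y_6Y_7\mathsf R_1\mathsf S\mathsf R_2\mathsf L_3\mathsf R_1\mathsf S$ and glue $\eta$ onto outputs $1,2,3$. The resulting web with nine external outputs and three inputs contains a small number of internal faces created by the cap. Because the cap joins three strands, the new face adjacent to it should be a square, possibly together with a digon.

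I would resolve these faces one at a time, pushing the cap inward through the $H$ and $Y$ pieces. Each square resolution splits one term into two. Each digon that appears produces a factor $d$. Each circle produces a factor $e=d^2-1$, which I would rewrite in powers of $d$.

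After all reductions, every surviving term should be a non-elliptic web. I would sort these by whether a factor $d$ survived:
\begin{itemize}
\item the $d$-terms should be exactly the eight maps $D_1,\dots,D_8$ (including $D_8=\tau^3\eta$, arising when the whole interface collapses to a tripod evaluation);
\item the $d$-free terms should be the eleven maps $\mathcal H_1,\dots,\mathcal H_{11}$.
\end{itemize}
To match terms, I would compare each resulting web's downward growth string, with its attached small map $h_i$ or $h'_j$, against the tables.

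\emph{Main obstacle and fallback.} The hard step is the bookkeeping of the resolution tree: choosing an order of square resolutions that does not create elliptic intermediates, and tracking planar embeddings so that terms can be matched to the growth strings. If the pictorial matching becomes unwieldy, I would instead verify both identities by direct tensor contraction. Both sides are explicit maps $V^{\otimes 3}\to V^{\otimes 9}$ (respectively $\one\to V^{\otimes 9}$), so one checks all $27\cdot 3^9$ coordinates using the local formulas \autoref{eq:localmaps}. This is finite and is exactly what the Lean verification covers. Proposition \autoref{thm:spider} guarantees that the diagrammatic and tensor identities agree.
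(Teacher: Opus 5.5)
Your approach is the paper's: cap the three reserved outputs and run a square/digon reduction tree. In the paper the first square splits $\eta\middlemap$ into two branches, and the second branch contains the tree's only digon (the single factor $d$; no circles appear), so its terminals are $D_1,\dots,D_8$ and those of the first branch are $\mathcal H_1,\dots,\mathcal H_{11}$. One slip in $\eta\createmap$: if you resolve the square first, no digon is left, because its two smoothings give a circle times a tripod and a bare tripod, i.e.\ $(e+1)\tau=d^2\tau$; the paper instead removes the digon between the cap vertex and the sink at outputs $1,2$ first, which exposes a second digon between the central source and the sink at outputs $3,4$ and gives $d\cdot d\,\tau$.
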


\begin{figure}[ht]
\centering
\includegraphics[width=0.9\textwidth]{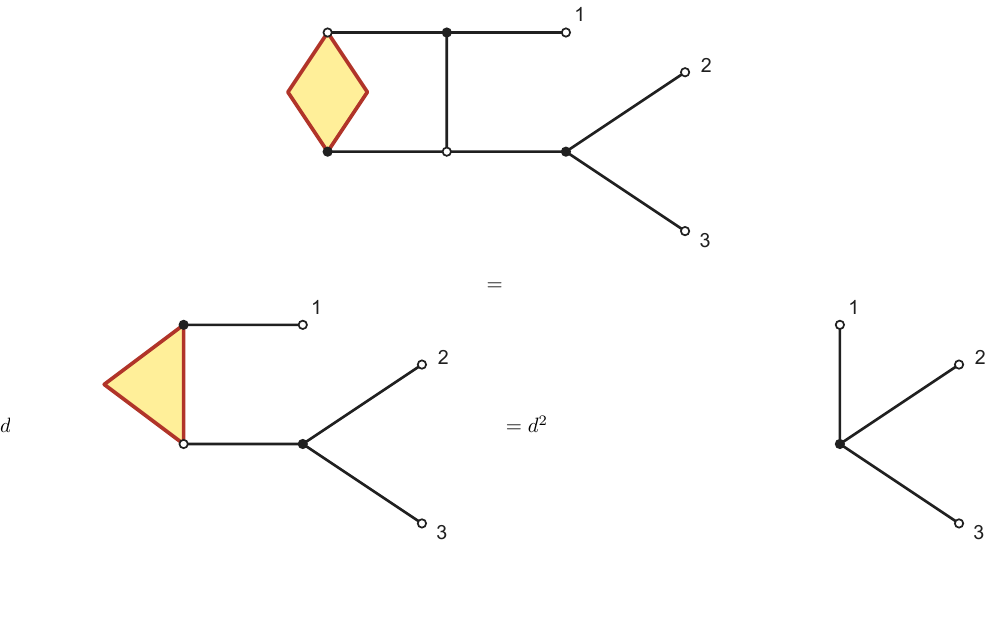}
\caption{The identity $\eta \createmap=d^2\tau^3$, with the two common
tripods suppressed. The six-output core has string
$Y_0H_0a_1Y_0a_0$. Its first three outputs are capped. Each
highlighted digon removes two vertices and contributes $d$.}
\label{loc:fun}
\end{figure}

\begin{proof}
For the first identity, cap the first three outputs of $\middlemap$.
Resolve the indicated square in \autoref{loc:firstsquare}. Its branches
are $R_1$ and $R_{22}$. A digon removal in the second branch gives
$R_{22}=dR_{23}$. The complete square-reduction tree is
\[
\begin{array}{rcl@{\qquad}rcl}
R_1&=&R_2+R_{11}&R_{23}&=&R_{24}+R_{31}\\
R_2&=&R_3+\mathcal H_4&R_{24}&=&R_{25}+D_4\\
R_3&=&R_4+R_7&R_{25}&=&R_{26}+D_6\\
R_4&=&\mathcal H_3+\mathcal H_5&R_{26}&=&D_8+D_7\\
R_7&=&\mathcal H_2+\mathcal H_1&R_{31}&=&R_{32}+R_{35}\\
R_{11}&=&R_{12}+R_{17}&R_{32}&=&D_3+D_5\\
R_{12}&=&R_{13}+\mathcal H_{10}&R_{35}&=&D_2+D_1\\
R_{13}&=&\mathcal H_{11}+\mathcal H_9&&&\\
R_{17}&=&R_{18}+\mathcal H_6&&&\\
R_{18}&=&\mathcal H_7+\mathcal H_8.&&&
\end{array}
\]
\autoref{loc:atlasappendix} displays every local relation and every
terminal beside the corresponding defining map. Each square has
coefficient one in both resolutions. Summing the left column gives
$\mathcal H$, and summing the right column gives $\removemap$.

The morphism $\createmap$ consists of its core with six boundary vertices, with leading word
$ABACBC$, followed by two tripods; this follows by reading its
displayed growth string. Capping the first three legs of that core
exposes a digon and then a second digon. Their removals leave a tripod.
The other two tripods are unchanged, giving the second identity.
\end{proof}

\begin{figure}[ht]
\centering
\includegraphics[width=0.9\textwidth]{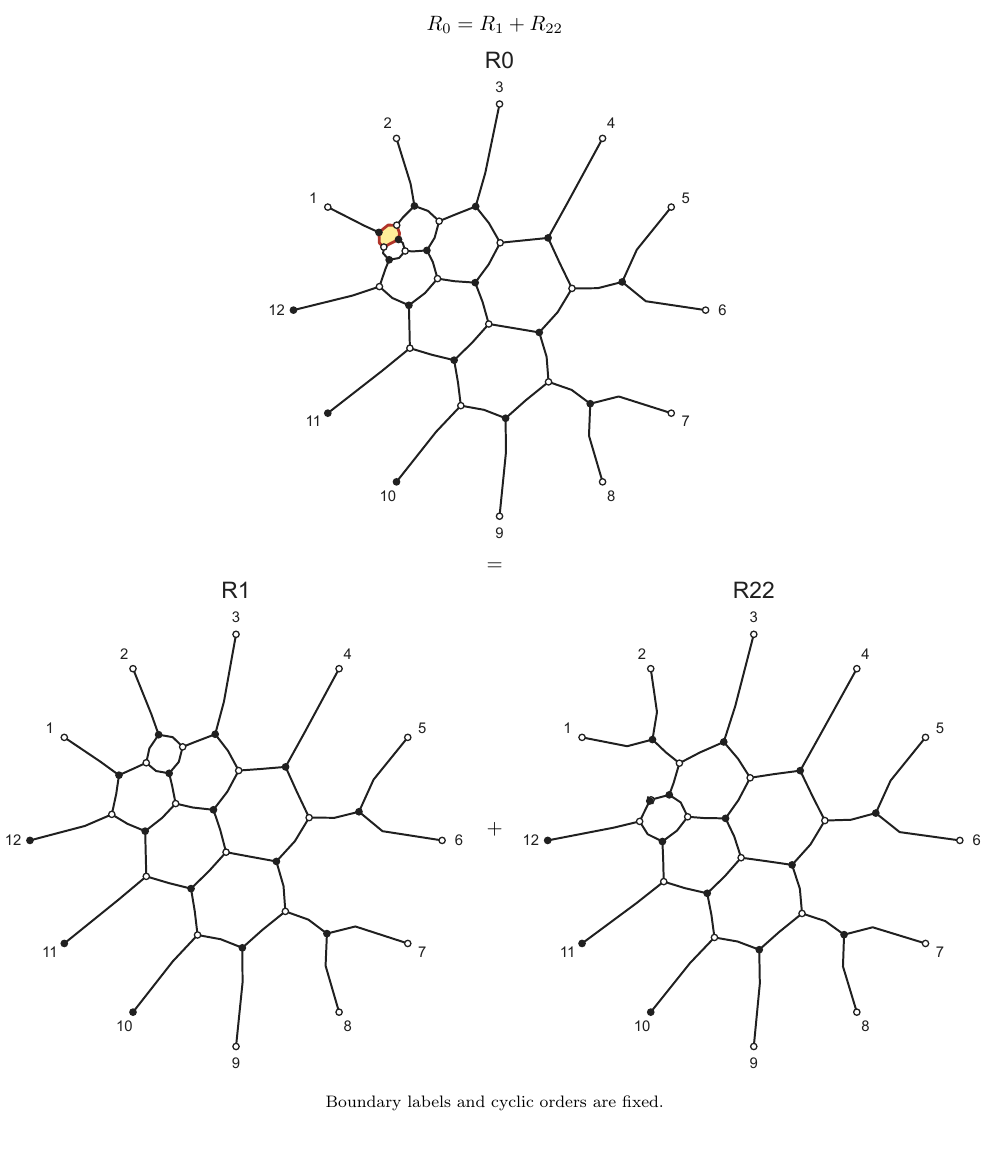}
\caption{First square in $\eta \middlemap$. Boundary $1,\ldots,9$ consists of
the outputs; $12,11,10$ are the inputs in their linear order.
Hollow vertices are sources and filled vertices are sinks, as in
\autoref{fig:relations}.
The highlighted face is replaced by its two non-crossing smoothings.
Subsequent relations are included in \autoref{loc:atlasappendix}.}
\label{loc:firstsquare}
\end{figure}

\begin{lemma}\label{loc:lowerrow}
Every entry in the lower row of $\widehat T$ belongs to $R$.
Its constant column at input $AAA$ is zero.
\end{lemma}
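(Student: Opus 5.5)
The plan is to compute the lower row of $\widehat T=(G\otimes I_9)TG^{-1}$ explicitly in terms of $\removemap$, $\mathcal H$ and $\tau^3\eta$. Then \autoref{loc:openidentity} removes $\middlemap$ and $\createmap$ from that row, and the degree bounds of \autoref{loc:ascent} finish the argument. Concretely, $\mathsf J^{-1}=\begin{pmatrix}I&0\\ \tpar\eta&1\end{pmatrix}$ and $G^{-1}=\mathsf J^{-1}\operatorname{diag}(\Delta^{-1},-1)$. Multiplying $T$ on the right by $G^{-1}$ gives a matrix whose bottom row is
\[
\bigl((\removemap-\tpar d\tau^3\eta)\Delta^{-1},\ d\tau^3\bigr)
\]
and whose top row is $\bigl((\middlemap-\tpar\createmap\eta)\Delta^{-1},\ \createmap\bigr)$. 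Applying $\mathsf J\otimes I_9$ on the left replaces the bottom row by itself minus $\tpar\eta$ times the top row. Finally $\operatorname{diag}(\Delta,-1)$ negates the bottom row.

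Next, substitute $\eta\middlemap=d\removemap+\mathcal H$, $\eta\createmap=d^2\tau^3$ and $\removemap=D_0+\tau^3\eta$ from \autoref{loc:openidentity} and the definition of $D_0$. The bookkeeping uses $1-\tpar d=-\tpar^2$ and $\tpar^2d^2-\tpar d=\tpar^4+\tpar^2$. This should yield the closed forms
\[
\widehat T_{\mathrm{low}}=\Bigl(\bigl(\tpar^2D_0+\tpar\mathcal H-\tpar^4\tau^3\eta\bigr)\Delta^{-1},\ (\tpar^3+\tpar)\tau^3\Bigr).
\]
The second entry is visibly in $\tpar R$, since $\tau$ has entries in $R$ by \autoref{eq:tripod}.

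For the first entry, I will use that $\Delta^{-1}$ multiplies input $s$ by $\tpar^{h_s}$, which lowers $\vpar$-degree by $h_s$. By \autoref{loc:ascent}, the terms $\tpar^2D_0$ and $\tpar\mathcal H$ then have $\vpar$-degree at most $-1$ and $0$, respectively. The term $\tau^3\eta$ has degree at most $3-\inv(s)\le 3$ by \autoref{eq:tripod}, so $\tpar^4\tau^3\eta\,\Delta^{-1}$ has degree at most $-1$. Hence the whole row lies in $R$. For the constant column at $AAA$: $\eta(AAA)=0$, $h_{AAA}=0$ and $\operatorname{asc}(AAA)=0$. So $\tpar\mathcal H$ and $\tpar^2D_0$ have strictly negative degree there, and the scalar column carries an overall factor $\tpar$. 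Thus every constant coefficient in the column at $AAA$ vanishes.

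The main obstacle is the first step: getting the signs and the order of composition right. The subtle points are the right-to-left convention, the tensoring of $G$ with $I_9$ on the output side, and the fact that $\eta$ applied to $\middlemap$ means capping the three connecting outputs, which is exactly what \autoref{loc:openidentity} states. A single sign slip would leave a stray $\tpar^{-1}$ or a $d$ not multiplied by $\tpar$, and that would break integrality. So I would verify the algebraic simplification $1-\tpar d=-\tpar^2$, and its role in killing the $\tpar^{-1}$ terms, before using the degree bounds.
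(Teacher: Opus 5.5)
Your proposal is correct and follows the paper's proof essentially step for step. You use the same expansion of $(G\otimes I_9)TG^{-1}$, the same substitution via \autoref{loc:openidentity} and $\removemap=D_0+\tau^3\eta$ leading to $\bigl((\tpar^2D_0+\tpar\mathcal H-\tpar^4\tau^3\eta)\Delta^{-1},\ \tpar^2d\tau^3\bigr)$, and the same degree argument from \autoref{loc:ascent} and $\eta(AAA)=0$. The only differences are cosmetic: your bound $-1$ on the $\eta$ term is weaker than the paper's $-2$ but still suffices, and your remark about the scalar column is not needed for the claim about the $AAA$ column.
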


\begin{proof}
Multiplying $(G\otimes I_9)TG^{-1}$, its lower row is
\[
\left(
\bigl(\tpar\eta \middlemap-\removemap-\tpar(\tpar\eta \createmap-d\tau^3)\eta\bigr)\Delta^{-1},
\tpar\eta \createmap-d\tau^3
\right).
\]
Now use \autoref{loc:openidentities}, $\tpar d-1=\tpar^2$ and
$\removemap=D_0+\tau^3\eta$. The lower row becomes
\begin{equation}\label{loc:lowerformula}
\left((\tpar^2D_0+\tpar\mathcal H-\tpar^4\tau^3\eta)\Delta^{-1},\tpar^2d\tau^3\right).
\end{equation}
After the input shift $-h_s$, the first two terms have degrees
at most $-1$ and $0$, respectively,
by \autoref{loc:ascent}. The term containing $\eta$ has degree at most $-2$: its evaluation degrees on $ABC,ACB,BAC,BCA,CAB,CBA$ are $3,2,2,1,1,0$, and the shift is $-4-h_s$. The last term has degree at most $-1$.
At input $AAA$, $\operatorname{asc}(AAA)=0$ and $\eta(AAA)=0$, so the first two terms have
degrees at most $-2$ and $-1$. Thus that column has zero constant.
\end{proof}

\subsection{The constant state and the two ends}\label{loc:ends}

Both rows of $\widehat T$ now have entries in $R$. To identify its
constant action, we need only the input state $AAA$. The following
three lemmas track it through the block, prefix and tail.

\begin{lemma}\label{loc:constantblock}
The constant part of $\widehat T$ at input $AAA$ is exactly
$AAA\otimes e_{\wordmiddle}$, with coefficient one.
\end{lemma}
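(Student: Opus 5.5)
The plan is to reduce the statement to a degree count on the single raw map $\middlemap$ at input $AAA$, and then to run a forced-path analysis in the style of \autoref{loc:offender}.

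\emph{Step 1: remove the gauge.} I first compute the column $G^{-1}e_{AAA}$. Since $\mathsf J^{-1}=\begin{pmatrix}I&0\\ \tpar\eta&1\end{pmatrix}$ and $h_{AAA}=0$, we get $G^{-1}e_{AAA}=(e_{AAA},\tpar\eta(AAA))$. This equals $(e_{AAA},0)$, because $\eta(AAA)=0$ by \autoref{eq:tripod}. Hence
\[
\widehat T e_{AAA}=(G\otimes I_9)\bigl(\middlemap(AAA),\removemap(AAA)\bigr),
\]
whose upper component is $(\Delta\otimes I_9)\middlemap(AAA)$. Its lower component is the lower row of $\widehat T$ evaluated at $AAA$. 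By \autoref{loc:lowerrow} that lower row has zero constant column at $AAA$, so the scalar coordinate contributes nothing. Since $\eta(AAA)=0$, the upper component also equals the $AAA$ column of $\Delta(\middlemap-\tpar\createmap\eta)\Delta^{-1}$, which is integral by \autoref{loc:upper}. It therefore remains to show that the constant part of $(\Delta\otimes I_9)\middlemap(AAA)$ is exactly $AAA\otimes e_{\wordmiddle}$.

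\emph{Step 2: path reformulation.} All coefficients of the web $\middlemap$ are nonnegative sums over compatible paths. Write $s$ for the first three output letters. A monomial contributes to the constant part after the $\Delta$ rescaling exactly when its path has $\vpar$-degree $-h_s$. I would show two things.
\begin{enumerate}
\item There is a path from $AAA$ to $AAA\otimes e_{\wordmiddle}$ of degree $0$, and it is unique.
\item No path from $AAA$ has degree $\ge0$ ending at any other state with $h_s=0$, and no path from $AAA$ has degree $-1$ ending at a state with $h_s=1$.
\end{enumerate}
For the first claim I use the factorization $\middlemap=O\,(\mathsf S\otimes I)M_{\mathrm{loc}}\mathsf S$ and the relation $\mathsf S(AAA)=AAAB+\tpar BAAA$ from \autoref{loc:Srow}. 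These force the first step in degree $0$ to be $AAA\mapsto AAAB$. I then follow the downward growth string of \autoref{eq:growth} backwards; it reduces $AAA\,\wordmiddle$ to $AAA$. The KK growth rules are precisely the degree-$0$ leading branches of $y$, $H$ and the cup, so this gives the leading path with coefficient one.

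\emph{Step 3: uniqueness and the exclusions.} This is the step I expect to be the main obstacle. By \autoref{loc:nonpositive}, any path of degree $\ge0$ must use only degree-$0$ branches in $O$, the $\mathsf R$ and $\mathsf L$ pieces, and the cups. The one exception is the second $\mathsf S$, which has a single positive branch, namely $ABC\to ACAC$.

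I would first rule out the positive branch. Entering it requires the intermediate state $ACAC$ at the input of the second $\mathsf S$, reached in degree $\ge -1$ from $AAAB$. I expect to exclude this by the same backward forcing as in \autoref{loc:middlepath}: that argument shows that a degree-$0$ route into the relevant state must start from $ACAC$ after the first $\mathsf S$, and $AAAB$ is not $ACAC$. A degree-$(-1)$ route would need a separate short check of the same kind.

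Once the positive branch is excluded, all pieces are used in degree $0$. I then check, piece by piece, that each degree-$0$ branch is determined by its output. For $y$ this is $y(a)$'s first term; for $H$ one reads the table in \autoref{sec:elementarytensors}. Starting from $AAAB$, this determinism forces the entire path, so the only degree-$0$ output is the leading state $AAA\otimes e_{\wordmiddle}$.

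The outputs with $h_s=1$ need a separate bound: I must show their raw degree from $AAA$ is at most $-2$. For this I would reuse the argument of \autoref{loc:outer}. An output prefix $ABC$, $BAC$ or $ACB$ already costs $-1$ inside $O$. An additional $-1$ must then come from $N_{\mathrm{loc}}$ at input $AAA$, because $N_{\mathrm{loc}}$ has no positive path at $AAA$ and no degree-$0$ path whose output meets the $O$-branch in question. If this bookkeeping becomes unwieldy, I would fall back on the finite machine check of the $28$ states in the accompanying Lean files, restricted to the single input column $AAA$.
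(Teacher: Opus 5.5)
Your Step~1 and your overall strategy match the paper's: remove the gauge using $\eta(AAA)=0$, handle the scalar coordinate with \autoref{loc:lowerrow}, exclude the positive branch of the second $\mathsf S$, find a unique degree-zero path, and control the outputs with $h_s=1$ via \autoref{loc:outer}. The gap is in the exclusion of the positive branch, which is the heart of the lemma, and you leave it partly open. That branch has input $ABC$, so it requires $M_{\mathrm{loc}}$ to \emph{output} a word beginning with $ABC$. \autoref{loc:middlepath} only says that a \emph{degree-zero} route through $M_{\mathrm{loc}}$ to such an output must start at $ACAC$. It does not rule out the route $AAA\mapsto AAAB$ (degree $0$), then $M_{\mathrm{loc}}$ in degree $-1$ to a word beginning with $ABC$, then the $+1$ branch. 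Such a path would have total degree $0$ through $N_{\mathrm{loc}}$, with three consequences:
\begin{enumerate}
\item it would falsify your claim that ``all pieces are used in degree $0$'';
\item it would add terms beginning with $ACAC$ to the degree-zero part of $N_{\mathrm{loc}}(AAA)$;
\item it would break your $h_s=1$ bound, which needs to know exactly that degree-zero part.
\end{enumerate}
You flag this case as needing ``a separate short check'' but do not carry it out, so neither half of Step~3 is established. The route through $BAAA$ also needs a sentence; it is excluded because $M_{\mathrm{loc}}$ never changes the first letter.

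The paper closes the gap with a letter-tracking argument that does not depend on degrees. The second output letter of $M_{\mathrm{loc}}$ is the left output of its first upward split, which acts on the second letter $A$ of $AAAB$. Getting $B$ there forces the branch $\tpar BA$ of $y(A)$, and then $H(AA)=AA$. Hence the split inside the last $\mathsf R$ acts on $A$ and can never produce $C$ as the third letter. So from $AAA$ the positive branch is never reached in any degree. It follows that $N_{\mathrm{loc}}(AAA)$ has degree at most $0$, with degree-zero part exactly $AAABABBC$.

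After this, no separate $h_s=1$ analysis is needed. Since $(\Delta\otimes I_9)O$ is integral by \autoref{loc:outer}, negative terms of $N_{\mathrm{loc}}(AAA)$ stay negative. On $AAABABBC$, the only constant path ends at $AAA\,\wordmiddle$, and all other branches begin with $AAA$, $AAB$, $ABA$ or $ABB$, where $h_s=0$.

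Separately, your determinism principle is false as stated. $H(BB)=AC+CA$ has two degree-zero branches, and the output $BB$ has two degree-zero preimages, $AC$ and $CA$. Uniqueness therefore has to be checked along the actual path, where every $H$ happens to receive $BA$.
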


\begin{proof}
The tripod evaluation $\eta$ vanishes at $AAA$, so there is no
upper-row correction.
By \autoref{loc:Srow}, $\mathsf S(AAA)=AAAB+\tpar BAAA$.
A degree increase at the second $\mathsf S$ requires the middle
map to output a word beginning with $ABC$. The map
$M_{\mathrm{loc}}$ fixes the first letter, so this is impossible
from $BAAA$. From $AAAB$, producing the second letter $B$ forces
the first $\mathsf R$, at input $AA$, to use its degree $-1$
output $BAA$. The first input of the last $\mathsf R$ is then $A$,
whose split cannot have first output $C$. Thus the second
$\mathsf S$ cannot increase the degree.

Only the $AAAB$ term can contribute degree zero. The unique
degree-zero branches of $\mathsf R$ at $AA$ and $\mathsf L$ at $BB$ force
\[
M_{\mathrm{loc}}(AAAB)[0]=AAAABBC,\qquad N_{\mathrm{loc}}(AAA)[0]=AAABABBC.
\]
All other terms have negative degree and remain negative under
$\Delta O$ by \autoref{loc:outer}. On the remaining term, the upward
operations $\mathsf R_1,Y_7,Y_6,Y_2$ give the sole constant path
\[
\begin{aligned}
AAABABBC&\to AAABBABBC\to AAABBABACC\\
&\to AAABBAACACC\to AAABBBAACACC=AAA\,\wordmiddle.
\end{aligned}
\]
Other branches here have first three letters among $AAA,AAB,ABA,ABB$.
They all have $h_s=0$, so the rescaling adds no further constant term.
The lower-row assertion is \autoref{loc:lowerrow}.
\end{proof}

\begin{lemma}\label{loc:initial}
The coefficients of $\widehat p$ belong to $R$, and
$\widehat p[0](AAA)=e_{\wordstart}$.
\end{lemma}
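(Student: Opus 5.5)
The plan is to compute $\widehat p=\pp G^{-1}$ explicitly, which reduces the lemma to degree bounds on the raw prefix $\startmap$ that already follow from \autoref{loc:nonpositive}.

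First I will invert $G=\operatorname{diag}(\Delta,-1)\mathsf J$. Since $\mathsf J$ is unitriangular,
\[
\mathsf J^{-1}=\begin{pmatrix}I&0\\\tpar\eta&1\end{pmatrix},
\qquad
G^{-1}=\mathsf J^{-1}\operatorname{diag}(\Delta^{-1},-1).
\]
Because $\pp=(\startmap,0)$ has zero scalar component, the lower-left entry $\tpar\eta$ of $\mathsf J^{-1}$ never contributes. Hence
\[
\widehat p=(\startmap\Delta^{-1},0).
\]
The scalar column is zero, so only the block $\startmap\Delta^{-1}$ matters. Right multiplication by $\Delta^{-1}=\operatorname{diag}(\tpar^{h_s})$ multiplies the column at input $s$ by $\tpar^{h_s}$. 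This lowers its $\vpar$-degree by $h_s$. The first claim is therefore equivalent to
\[
\deg_{\vpar}(\startmap)_{\omega,s}\le h_s \quad\text{for all output states }\omega\text{ and inputs }s.
\]

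Next I will read the upward structure off the downward string $Y_0Y_3H_2H_1Y_0$ in \autoref{eq:growth}. Reversed, this string applies $Y_0,H_1,H_2$ first, which is exactly $\mathsf S=H_2H_1Y_0$ on the three inputs. It then applies a split at position $3$, and finally a split at position $0$. By \autoref{loc:nonpositive}, splits have only nonpositive paths. The only positive path through $\mathsf S$ has degree one and starts at input $ABC$. So every path has degree at most $[s=ABC]\le h_s$, which proves integrality.

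For the constant part at $AAA$ I will use \autoref{loc:Srow}: $\mathsf S(AAA)=AAAB+\tpar BAAA$. The second term has degree $-1$ and stays negative after the nonpositive splits. Since $h_{AAA}=0$, the rescaling adds nothing. On $AAAB$, the only degree-zero branch of the split at position $3$ is $y(B)=AC$, giving $AAAAC$. The only degree-zero branch of the split at position $0$ is $y(A)=AB$, giving $ABAAAC=\wordstart$ with coefficient one. Every other branch picks up a factor $\tpar$, so $\widehat p[0](AAA)=e_{\wordstart}$.

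The only delicate point is bookkeeping: getting the reversed order of the growth string right, including the renumbering after each step, so that position $3$ indeed carries the letter $B$ coming from $\mathsf S$. The sanity check is that the result must reproduce $\wordstart=ABAAAC$, which the paper notes reduces to $AAA$ under this string.
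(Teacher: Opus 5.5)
Your proposal is correct and follows the paper's proof: you reduce to $\widehat p=(\startmap\Delta^{-1},0)$, bound degrees using $\startmap=Y_0Y_3\mathsf S$ together with the unique positive path of $\mathsf S$ (at input $ABC$), and trace the unique constant path $AAA\to AAAB\to AAAAC\to ABAAAC=\wordstart$. The only difference is that you write out $G^{-1}=\mathsf J^{-1}\operatorname{diag}(\Delta^{-1},-1)$ to justify why the scalar column vanishes, a step the paper leaves implicit.
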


\begin{proof}
Since the scalar entry of $\pp$ is zero,
$\widehat p=(\startmap\Delta^{-1},0)$. In $\startmap=Y_0Y_3\mathsf S$, the two $Y$'s
are nonpositive and \autoref{loc:nonpositive} gives
\[
\deg_{\vpar} (\startmap)_{w,s}\le[s=ABC]\le h_s.
\]
For $AAA$, the constant path is
$AAA\to AAAB\to AAAAC\to ABAAAC$; the other branch in
\autoref{loc:Srow} has negative degree and stays negative through the remaining splits.
\end{proof}

\begin{lemma}\label{loc:terminal}
All coefficients of $\widehat\rho$ belong to $R$, and
\[
\widehat\rho[0]=AAA\otimes e_{\wordend}.
\]
\end{lemma}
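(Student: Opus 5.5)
First I would write $\widehat\rho$ out explicitly. From $\tailmap=\binom{\finishmap}{\tau^2}$ and $G=\operatorname{diag}(\Delta,-1)\mathsf J$, with $\mathsf J=\begin{pmatrix}I&0\\-\tpar\eta&1\end{pmatrix}$, I get
\[
\widehat\rho=(G\otimes I_6)\tailmap=\binom{(\Delta\otimes I_6)\finishmap}{\tpar(\eta\otimes I_6)\finishmap-\tau^2}.
\]
So there are two things to check: the rescaled upper component $(\Delta\otimes I_6)\finishmap$, and the scalar component $\tpar\,\eta\finishmap-\tau^2$. The tripods $\tau^2$ have coordinates in $R$ by \autoref{eq:tripod}. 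Their constant part is $e_{ABC}\otimes e_{ABC}$, which is not $e_{\wordend}=BBCBCC$. So this constant term must be cancelled by a constant term of $\tpar\,\eta\finishmap$. I would therefore expect $\eta\finishmap$ to evaluate, via web relations, to a multiple of $d\,\tau^2$ plus terms of lower degree.

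Step one is the upper component. The string $\finishmap:Y_2Y_3Y_2Y_1Y_2Y_1Y_0a_0$ uses only splits and one cup. By \autoref{loc:nonpositive}, every path has degree at most zero. After rescaling by $\Delta$, I only need that outputs whose first three letters are $ABC$, $ACB$ or $BAC$ have degree at most $-1$. I would argue this as in \autoref{loc:outer}: trace backward which splits produce the first three letters and show that such a letter pattern forces a degree-$(-1)$ branch ($\tpar CA$, $\tpar BA$, etc.), or forces a cup coefficient $\tpar$ or $\tpar^2$. For the constant part, the unique degree-zero path uses the leading branch everywhere. This reproduces the leading-state computation $AAA\,\wordend\to\cdots\to\varnothing$ stated after \autoref{eq:growth}, read upward, giving exactly $AAA\otimes e_{\wordend}$. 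Its prefix $AAA$ has $h=0$, so it is not rescaled.

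Step two is the scalar component. I would compute $\eta\finishmap$ by capping the first three outputs of $\finishmap$ with $\eta$ and reducing with the web relations, in the same way as $\eta\createmap=d^2\tau^3$ in \autoref{loc:openidentity}. The tail core has leading word $AAA\,BBCBCC$. Capping $AAA$-type legs next to the split tree should expose digons and a square. My target is an identity of the form $\eta\finishmap=d\,\tau^2+\mathcal H'$, where $\mathcal H'$ is a sum of webs whose paths all have degree at most zero. Then
\[
\tpar\,\eta\finishmap-\tau^2=(\tpar d-1)\tau^2+\tpar\mathcal H'=\tpar^2\tau^2+\tpar\mathcal H',
\]
which lies in $\tpar R$. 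So it has no constant part, and $\widehat\rho[0]$ is exactly the upper constant term.

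If the web reduction proves awkward, I would fall back on a direct path count: bound $\deg_{\vpar}$ of $\eta\finishmap$ by $1$ using the $\eta$ evaluation degrees (at most $3$) against the forced negative branches. I would then check by hand that the only degree-one coefficient is at $ABCABC$, equal to one.

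The main obstacle is step two. Making the cancellation of the constant $ABCABC$ term exact requires knowing the degree-$1$ part of $\eta\finishmap$ precisely, not just bounding it.
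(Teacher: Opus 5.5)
\textbf{Assessment.} Your decomposition $\widehat\rho=\binom{(\Delta\otimes I_6)\finishmap}{\tpar\eta\finishmap-\tau^2}$ is correct. Your step two is the paper's route. One square resolution and one digon removal give $\eta\finishmap=d\tau^2+W_{AABBCC}$. The tree $W_{AABBCC}$ has downward string $Y_1Y_2Y_1Y_0a_0$, hence only nonpositive factors, so the scalar entry is $\tpar^2\tau^2+\tpar W_{AABBCC}\in\tpar R$, exactly as you predict. Your fallback path count is also consistent: the only degree-one coefficient of $\eta\finishmap$ is $\tpar^{-1}$ at $ABCABC$.

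\textbf{The gap: the constant term in step one.} The rescaling $\Delta$ multiplies every coordinate whose first three letters are $ABC$, $ACB$ or $BAC$ by $\tpar^{-1}$. So a coefficient of $\vpar$-degree $-1$ at such a state becomes a constant term of $\widehat\rho$.
\begin{enumerate}
\item Your criterion (degree at most $-1$ at those states) only yields integrality.
\item Your identification of $\widehat\rho[0]$ examines only the unique degree-zero path. It therefore does not rule out further constants coming from degree-$(-1)$ paths that end at rescaled states.
\item What you need is degree at most $-2$ at those states. The local argument of \autoref{loc:outer} does not give this: a forbidden prefix forces only one degree-$(-1)$ branch, for instance $r=C$ as the left output of the last split.
\end{enumerate}

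\textbf{How the paper closes it.} It enumerates paths. Every split and the cup have a unique degree-zero branch and a unique degree-$(-1)$ branch. So, apart from the constant path, there are exactly eight paths of degree $-1$, one for each upward step $a_0,Y_0,Y_1,Y_2,Y_1,Y_2,Y_3,Y_2$. Their output words are $AAACBCBCB$, $BAABBCACC$, $ABACBCABC$, $AAABBCCBC$, $ABABACBCC$, $AABCABBCC$, $AAABCBBCC$ and $AABABCBCC$. Each begins with $AAA$, $BAA$, $ABA$ or $AAB$, never with $ABC$, $ACB$ or $BAC$. Hence those coordinates have degree at most $-2$ before rescaling and stay in $\tpar R$ afterwards. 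This is what makes $AAA\otimes e_{\wordend}$ the only constant term.

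So the delicate point of this lemma is this check in the upper component, not the scalar entry you flagged as the main obstacle.
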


\begin{proof}
Cap the first three outputs of $\finishmap$ with $\eta$.
A square resolution and a digon removal give the relation in
\autoref{fig:moresquares}:
\begin{equation}\label{loc:tailidentity}
\eta \finishmap=d\tau^2+W_{AABBCC}.
\end{equation}

\begin{figure}[ht]
\centering
\begin{tikzpicture}
\node[inner sep=0,anchor=south west] (tailidentity) at (0,0)
  {\includegraphics[width=0.9\textwidth]{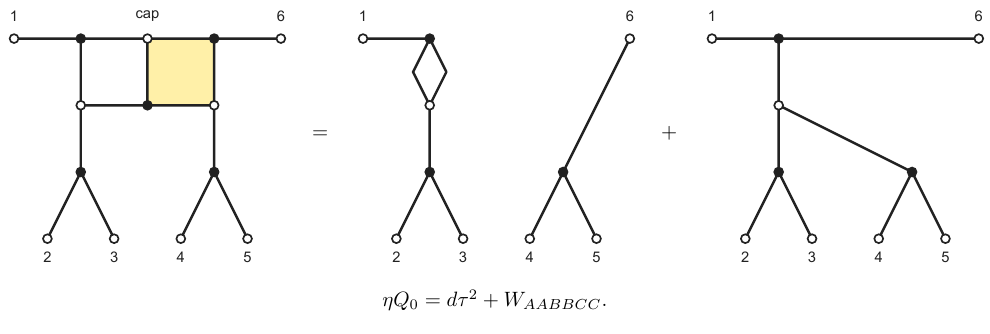}};
\begin{scope}[x={(tailidentity.south east)},y={(tailidentity.north west)}]
\fill[white] (0,0) rectangle (1,.18);
\node[anchor=north,inner sep=0,font=\small] at (.5,.15)
  {$\eta\finishmap=d\tau^2+W_{AABBCC}.$};
\end{scope}
\end{tikzpicture}
\caption{The terminal square relation \autoref{loc:tailidentity}. The second
branch is the tree with word $AABBCC$; the first becomes two tripods
after one digon removal. Boundary positions are fixed.}
\label{fig:moresquares}
\end{figure}

The tree has downward string
$Y_1Y_2Y_1Y_0a_0$. Thus
\[
\widehat\rho=\binom{\Delta \finishmap}{\tpar^2\tau^2+\tpar W_{AABBCC}}.
\]
The scalar entry is strictly negative in $\vpar$-degree. The tree and
$\tau^2$ contain only nonpositive $Y$ and cup factors.

The upward tail string is $a_0,Y_0,Y_1,Y_2,Y_1,Y_2,Y_3,Y_2$.
It has one degree-zero path and eight paths of degree $-1$: choose
one $-1$ branch and zero branches elsewhere. They give
\[
\begin{array}{c|l}
\text{changed upward step}&\text{complete output word}\\\hline
\text{none}&AAABBCBCC\\
0&AAACBCBCB\\
1&BAABBCACC\\
2&ABACBCABC\\
3&AAABBCCBC\\
4&ABABACBCC\\
5&AABCABBCC\\
6&AAABCBBCC\\
7&AABABCBCC.
\end{array}
\]
None starts with $ABC,ACB,BAC$. The coordinates with these first
three letters therefore have degree at most $-2$ before the shift
by $\Delta$, and remain strictly negative afterwards. All other
coordinates are unshifted. This leaves the single constant state
$AAA\,\wordend$.
\end{proof}

\subsection{Identifying the dual canonical vector}

We have the following.

\begin{lemma}\label{prop:canonical}
For every integer $m\ge0$, the tensor in \autoref{eq:iterate} satisfies
\[
\beta_m=b_{c_m}.
\]
\end{lemma}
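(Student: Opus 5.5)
By \autoref{thm:canonical}, $b_{c_m}$ is the unique $\Phi$-fixed tensor in $V^{\otimes(12+9m)}$ congruent to $e_{c_m}$ modulo $\tpar\bigoplus_z Re_z$. So I would verify two things for $\beta_m$: bar invariance and this congruence.

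\emph{Bar invariance.} Expand $\beta_m=\pp T^m\tailmap$ as a sum over paths through the $2\times2$ block structure of $T$. Each summand is a product of a coefficient and a single web with $12+9m$ positive outputs and no inputs. The webs are compositions of $\startmap$, $\middlemap$, $\finishmap$, $\createmap$, the $D_i$ and tripods $\tau$, glued along identity strands. The coefficients are products of $\pm1$ and powers of $-d$. Since $d=\tpar+\tpar^{-1}$ is bar invariant, every coefficient is bar invariant. By \autoref{lem:web-bar}, every web map $W\colon\one\to V^S$ satisfies $\Phi_S W=W\Phi_\varnothing$, and $\Phi_\varnothing$ is the antilinear identity on $\one$. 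Hence each web tensor is fixed by $\Phi_S$, and by antilinearity so is $\beta_m$.

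\emph{Congruence.} Use the gauge factorization $\beta_m=\widehat p\,\widehat T^{\,m}\widehat\rho$ from \autoref{loc:gauge}, where the intermediate factors $G^{-1}G$ cancel because $G$ acts only on the $\mathcal E$ factor. By \autoref{loc:upper} together with the $\Delta\createmap$ entry, the upper row of $\widehat T=(G\otimes I_9)TG^{-1}$ has entries in $R$. This needs a short check of the block multiplication: the upper row is $\bigl(\Delta(\middlemap-\tpar\createmap\eta)\Delta^{-1},\,\pm\Delta\createmap\bigr)$, which is exactly the pair in \autoref{loc:upper}. By \autoref{loc:lowerrow} the lower row also has entries in $R$, and \autoref{loc:initial} and \autoref{loc:terminal} do the same for $\widehat p$ and $\widehat\rho$.

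It follows that $\beta_m$ has all coordinates in $R$ and $\beta_m\equiv \widehat p[0]\,\widehat T[0]^m\,\widehat\rho[0]\pmod{\tpar}$. Now $\widehat\rho[0]=AAA\otimes e_{\wordend}$ by \autoref{loc:terminal}. Applying $\widehat T[0]$ to a vector $AAA\otimes(\text{stuff})$ only uses the column at input $AAA$, which is $AAA\otimes e_{\wordmiddle}$ by \autoref{loc:constantblock}. Inducting on $m$ gives $\widehat T[0]^m\widehat\rho[0]=AAA\otimes e_{\wordmiddle^m\wordend}$. Finally $\widehat p[0](AAA)=e_{\wordstart}$ by \autoref{loc:initial}, so $\beta_m\equiv e_{c_m}$. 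In particular this shows that $\beta_m\ne0$ and that its leading coefficient is exactly $1$.

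\emph{Main obstacle.} The delicate point is the bookkeeping of the constant part, because $\widehat T[0]$ is not a scalar. I must check that only the $AAA$ column ever matters, and this holds because every reachable intermediate state is exactly $AAA$ (the scalar state $e_*$ never appears in degree zero). I also need the tensor orderings (identities on external strands, boundary order from prefix to tail) to match the word $c_m=\wordstart\wordmiddle^m\wordend$. This requires the block outputs after the three reserved strands to be placed correctly, as in \autoref{ex:assembly}. Uniqueness in \autoref{thm:canonical} then gives $\beta_m=b_{c_m}$.
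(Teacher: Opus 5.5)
Your proposal is correct and follows the paper's proof. It uses the same four ingredients: integrality of $\widehat p,\widehat T,\widehat\rho$ via \autoref{loc:upper}, \autoref{loc:lowerrow}, \autoref{loc:initial} and \autoref{loc:terminal}; the constant-term induction that only ever reaches the $AAA$ column; bar invariance from \autoref{lem:web-bar} together with $\overline d=d$; and uniqueness in \autoref{thm:canonical}. The only cosmetic difference is in the bar invariance step: you expand $\pp T^m\tailmap$ into individual composite webs with bar-invariant coefficients, whereas the paper gives $\mathcal E$ the componentwise bar and lets the three maps commute with it, and these two arguments are equivalent.
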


\begin{proof}
The transformed maps have entries in $R$ by
\autoref{loc:upper}, \autoref{loc:lowerrow}, \autoref{loc:initial}
and \autoref{loc:terminal}. Their constant terms are
\[
\widehat\rho[0]=AAA\otimes e_{\wordend},\quad
\widehat T[0](AAA)=AAA\otimes e_{\wordmiddle},\quad
\widehat p[0](AAA)=e_{\wordstart}.
\]
Since the maps have entries in $R$, the constant term of a
composition is the composition of their constant terms.
Only the $AAA$ column of $\widehat T[0]$ is reached in this
induction. Induction therefore gives
\[
\beta_m=\widehat p\,\widehat T^{\,m}\widehat\rho
\equiv e_{\wordstart}\otimes e_{\wordmiddle}^{\otimes m}\otimes e_{\wordend}
=e_{c_m}\pmod {\tpar\sum_w R e_w}.
\]
Thus every nonleading tensor coordinate belongs to $\tpar R$.
Give $\mathcal E=V^{\otimes3}\oplus\one$ the componentwise
tensor bar. Each entry in \autoref{eq:maps} is a web map or a sum of
web maps, and the only nonconstant scalar is $-d$, with
$\overline d=d$. By \autoref{lem:web-bar}, the three maps commute
with the corresponding bars, so their composite $\beta_m$ is fixed
by the tensor bar. Together with the congruence above,
\autoref{thm:canonical} identifies it uniquely with $b_{c_m}$.
\end{proof}

We now compute the coordinate required for the KL coefficient.

\section{A two-state calculation of the coefficient}\label{sec:readout}

Fixing the boundary state $a_m$ turns the tensor construction into
a matrix calculation on the interface.

\subsection{The selected rows}

We have identified $\beta_m$ with $b_{c_m}$. By \autoref{thm:FKK},
it remains to extract the coefficient of $\tpar$ in its coordinate at
$a_m=(ABC)^{N_m}$.

Fix the six external outputs of $\widehat p$ and $\widehat\rho$ to
$ABCABC$, and the nine external outputs of $\widehat T$ to $(ABC)^3$.
Denote the resulting row, $28\times28$ matrix and column by $p_a$,
$\mathsf A$ and $\rho_a$, respectively. Then
\[
[e_{a_m}]\beta_m=p_a \mathsf A^m \rho_a.
\]
All entries lie in $R$. To find the coefficient of $\tpar$, we need the
leading terms of the ends and two rows of $\mathsf A[0]$.
The local tables in \autoref{sec:elementarytensors} give the exact end vectors
\begin{equation}\label{loc:selectedends}
p_a=\langle ABC|+\tpar\langle ACB|+\tpar\langle BAC|,
\qquad
\rho_a=\tpar\,ABC+\tpar^2(ACB+BAC+2e_*).
\end{equation}
Here $e_*$ is the scalar interface state from \autoref{def:corrected}.
For the prefix, the covector before multiplication by $\Delta^{-1}$ is
$\tpar^{-1}\langle ABC|+\langle ACB|+\langle BAC|$ by \autoref{loc:Srow}.
For the tail, the only nonzero unshifted coordinates are
$(\finishmap)_{ABC,ABCABC}=\tpar^2$, $(\finishmap)_{ACB,ABCABC}=(\finishmap)_{BAC,ABCABC}=\tpar^3$. Multiplication by $G$ gives
\autoref{loc:selectedends}, including $[e_{ABCABC}](\tpar\eta \finishmap-\tau^2)=2\tpar^2$ in the scalar
coordinate.

\begin{lemma}\label{loc:tworows}
The rows of $\mathsf A[0]$ indexed by $ABC$ and $*$ vanish outside
the columns indexed by these same two states. In the order $ABC,*$,
their restriction is
\[
K=\begin{pmatrix}0&1\\2&0\end{pmatrix}.
\]
\end{lemma}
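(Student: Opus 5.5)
We compute the two rows of $\mathsf A[0]$ directly from the block formula for $\widehat T$, using only the degree bounds already proved. Recall $\widehat T=(G\otimes I_9)TG^{-1}$. From \autoref{loc:upper} and \autoref{loc:lowerrow}, its upper-left block is $\Delta(\middlemap-\tpar\createmap\eta)\Delta^{-1}$ and its upper-right block is $-\Delta\createmap$, up to the sign from $\operatorname{diag}(\Delta,-1)$. The lower row is \autoref{loc:lowerformula}. We specialize each block's external outputs to $(ABC)^3$ and read off constant terms.

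\emph{The row indexed by $*$.} We use \autoref{loc:lowerformula}. The term $\tpar^2D_0\Delta^{-1}$ has degree at most $\operatorname{asc}(s)-2-h_s\le-1$, so it contributes nothing. The term $-\tpar^4\tau^3\eta\Delta^{-1}$ has degree at most $-2$, so it also contributes nothing. The scalar entry $\tpar^2d\tau^3$ has degree at most $-1$. Hence the scalar column of this row has constant term $0$, consistent with $K_{*,*}=0$.

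The only possible constants therefore come from $\tpar\mathcal H\Delta^{-1}$. This requires $\deg\mathcal H_{\omega,s}=1+h_s$, which by \autoref{loc:ascent} forces $\operatorname{asc}(s)=1+h_s$. The candidates are $s=ABC$ (degree $2$), or $h_s=0$ with exactly one ascent and that degree attained. So I would go through the eleven terms $\mathcal H_j$ at output $(ABC)^3$ and enumerate their maximal-degree paths. The goal is to show that the total coefficient is $2$ at $s=ABC$ and $0$ at every other input. I expect the count of $2$ to come from the two exceptional-type contributions: $\mathcal H_6$ through $\mathsf S$ at $ABC$, and a second term with $h'_j=I$ and a degree-$2$ path. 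The honest method is a finite path enumeration using the tables of \autoref{eq:localmaps}.

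\emph{The row indexed by $ABC$.} Output $ABC$ has $h=1$, so the rescaling multiplies by $\tpar^{-1}$. For the scalar column, $-\Delta\createmap$ at $ABC\otimes(ABC)^3$ gives $\tpar^{-1}\cdot[\,\cdot\,]\createmap$. Reading $\createmap=W_{ABACBC}\otimes\tau^2$, the core coordinate at $ABCABC$ is $\tpar$ (first example in \autoref{ex:tableau-to-web}-style tables, \autoref{eq:sixcoordinates}), and $\tau^2$ at $ABCABC$ is $1$. After the shift this gives constant $1$, with the sign absorbed by the $-1$ in $G$. That is $K_{ABC,*}=1$.

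For the three-strand columns, I would use the factorization $\middlemap=ON_{\mathrm{loc}}$ and the analysis in \autoref{loc:offender} and \autoref{loc:outer}. A constant requires degree exactly $1+h_s-\ldots$ after shifts; concretely, it needs $\deg=h_s-1+1$ at output $ABC$ prefix. Tracing paths backwards from output $ABC$ forces the middle path of \autoref{loc:middlepath}. The degree-$2$ term at $s=ABC$ is exactly cancelled by $\tpar\createmap\eta$. Any remaining degree-$1$ contributions must be shown to land at inputs with $h_s=1$ but with output not $(ABC)^3$, or to cancel. This yields $K_{ABC,ABC}=0$ and zeros elsewhere.

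\emph{Main obstacle.} The hardest step is the exhaustive path enumeration for the $*$ row through the eleven $\mathcal H_j$ at the fixed output $(ABC)^3$, including the small maps $\lambda_2E_1$ and $\lambda_1E_2$. I would do this mechanically, term by term, and cross-check against the $m=2$ data: $p_a\mathsf A^2\rho_a$ must have $\tpar$-coefficient $2$. With the end vectors \autoref{loc:selectedends}, $p_a[0]=\langle ABC|$ and $\rho_a\equiv\tpar\,ABC$ at first order, so this check gives $(K^2)_{ABC,ABC}=2$, which is consistent with $K$.
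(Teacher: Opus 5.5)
You have the right outline, but the lemma is exactly the computation you postpone, and the degree lemmas you cite cannot replace it.

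\textbf{What is correct.} The degree bounds do give $\mathsf A_{*,*}[0]=0$. They also cut the possible constants in the $*$ row down to inputs with $\operatorname{asc}(s)=1+h_s$. Your computation $(\createmap)_{ABC,\omega}=\tpar$ gives $K_{ABC,*}=1$; note the upper-right block of $\widehat T$ is $+\Delta\createmap$, since the two minus signs cancel.

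\textbf{Why the bounds are not enough.} \autoref{loc:ascent}, \autoref{loc:offender} and \autoref{loc:upper} are non-strict bounds built to prove integrality. The nonzero entries of $K$, and several entries that must vanish, sit precisely where these bounds are attained.

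\textbf{The $*$ row.} You omit the step that makes the check finite. The outputs $ABC\otimes\omega$ and $*\otimes\omega$ have weight zero and $G$ preserves weights, so only the six permutations of $ABC$ and $*$ can occur as columns. Your guess for the source of the $2$ is also wrong.
\begin{enumerate}
\item $\mathsf A_{*,ABC}[0]$ is the $\tpar^{-2}$-coefficient of $\mathcal H_{\omega,ABC}$. It comes from $\mathcal H_4$ and $\mathcal H_7$: $\lambda_2E_1(ABC)=\lambda_1E_2(ABC)=\tpar^{-2}AC+\tpar^{-1}BB$, paired with the prefix covectors $(1+\tpar^2)\langle AC|+\tpar\langle BB|$ and $\langle AC|$.
\item $\mathcal H_6$ has prefix covector $\langle ABC|+\tpar\langle ACB|+\tpar\langle BAC|$. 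The positive branch of $\mathsf S$ is offset by the prefix, so it contributes only $1$ at $(\omega,ABC)$. No term with $h'_j=I$ has a $\tpar^{-2}$ there.
\item At $BCA$ and $CAB$ the ascent bound is attained. You must check directly that $\mathcal H_{\omega,BCA}=2+\tpar^2$ and $\mathcal H_{\omega,CAB}=1+\tpar^2$ have no $\tpar^{-1}$ term.
\end{enumerate}

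\textbf{The $ABC$ row.} Here the gap is sharper. \autoref{loc:offender} controls only monomials of degree exceeding $h_s$. The constants of $\mathsf A_{ABC,s}$ instead come from monomials of $\vpar$-degree exactly $h_s$ in the $(AC)^4$ row of $N_{\mathrm{loc}}-\tpar\kappa_8\eta$, and $N_{\mathrm{loc}}$ genuinely has such monomials.
\begin{enumerate}
\item $\langle ABC\omega|O\mathsf SM_{\mathrm{loc}}$ has constant part $\langle ACAC|$, via \autoref{loc:middlepath}. Since $\langle ACAC|\mathsf S=\tpar^{-1}\langle ABC|+\langle ACB|+\langle BAC|$, the term $\Delta\middlemap\Delta^{-1}$ alone has constant $1$ at $ACB$ and at $BAC$.
\item These constants disappear only because the row of $\tpar\createmap\eta$ at $ABC\otimes\omega$ is $\tpar^2\eta=(\tpar^{-1},1,1,\tpar,\tpar,\tpar^2)$, in the order $ABC,ACB,BAC,BCA,CAB,CBA$.
\item Conversely, at $BCA$ and $CAB$ this correction alone contributes $-1$ to the constant, which $\middlemap$ must cancel.
\end{enumerate}
Proving these exact cancellations requires the explicit downward covector computation through $O$, $\mathsf S$, $M_{\mathrm{loc}}$, $\mathsf S$. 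For the $*$ row it requires the prefix covectors of $D_0$ and $\mathcal H$. This is what the paper carries out; your phrase ``or to cancel'' names the issue without resolving it.

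\textbf{The $m=2$ cross-check.} It cannot substitute for any of this. The entry $(\mathsf A[0]^2)_{ABC,ABC}=\sum_s\mathsf A[0]_{ABC,s}\mathsf A[0]_{s,ABC}$ involves other rows unless the $ABC$ row is already known. It also relies on the machine-checked $94$-term correction that this lemma is meant to bypass.
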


\begin{proof}
Both selected output states and the external state $(ABC)^3$ have
weight zero. Since the rescaling and tripod evaluation in $G$
preserve weights, only inputs of weight zero can contribute.
These are the six permutations of $ABC$ and the scalar state.
The table gives the complete rows; we need only their constant terms.
\begin{center}\small
\begin{tabular}{c|c|c}
input $s$&$\mathsf A_{ABC,s}$&$\mathsf A_{*,s}$\\\hline
$ABC$&$6\tpar+6\tpar^3$&$2+12\tpar^2+6\tpar^4$\\
$ACB$&$5\tpar^2+2\tpar^4$&$2\tpar+8\tpar^3+2\tpar^5$\\
$BAC$&$5\tpar^2+3\tpar^4$&$2\tpar+10\tpar^3+3\tpar^5$\\
$BCA$&$2\tpar^2$&$2\tpar+2\tpar^3$\\
$CAB$&$\tpar^2$&$\tpar+2\tpar^3$\\
$CBA$&$0$&$\tpar^2$\\
$*$&$1$&$\tpar+\tpar^3$
\end{tabular}.
\end{center}
We derive the table from the elementary maps. Put $\omega=(ABC)^3$.
In the following formulas, a word $s$ denotes $\langle s|$.
Starting with the fixed output $ABC\omega$, the factorization
$\middlemap=ON_{\mathrm{loc}}$ gives the following covectors as we work
downward:
\begin{align*}
\langle ABC\omega|O={}&\tpar^2 AACCACAC+\tpar ACACACAC,\\
\langle ABC\omega|O\mathsf S={}&(1+\tpar^2)ABCACAC+\tpar BACACAC+\tpar ACBACAC,\\
\langle ABC\omega|O\mathsf S M_{\mathrm{loc}}={}&(\tpar+\tpar^3)AACC+(1+3\tpar^2+\tpar^4)ACAC\\
&+(\tpar+2\tpar^3)ACBB+(\tpar+\tpar^3)BBAC\\
&+\tpar^2(BBBB+BABC+ABCB).
\end{align*}
Applying the remaining $\mathsf S$ gives the raw $\middlemap$ row.
Pairing the first covector with $\kappa_8$ gives ${(\createmap)_{ABC,\omega}}=\nobreak \tpar$.
For $D_0$ and $\mathcal H$, use the prefix covectors listed in
\autoref{loc:covectorappendix} and then apply the small maps from
their defining tables. In the input order
\[
ABC,\ ACB,\ BAC,\ BCA,\ CAB,\ CBA,
\]
their sums are
\begin{align*}
(D_0)_{\omega,\bullet}
&=(5\tpar^{-1}+3\tpar,\ 4+\tpar^2,\ 5+2\tpar^2,\ 2\tpar,\ 2\tpar,\ \tpar^2),\\
\mathcal H_{\omega,\bullet}
&=(2\tpar^{-2}+8+3\tpar^2,\ 2\tpar^{-1}+5\tpar+\tpar^3,\ 2\tpar^{-1}+6\tpar+\tpar^3,\ 2+\tpar^2,\ 1+\tpar^2,\ \tpar).
\end{align*}
Substitute these rows and \autoref{eq:tripod} into the upper-row
formula in \autoref{loc:upper} and the lower-row formula
\autoref{loc:lowerformula}. This gives the displayed table. Its only
constant entries are $\mathsf A_{ABC,*}[0]=1$ and
$\mathsf A_{*,ABC}[0]=2$, which proves the lemma.

The constant $2$ comes from $\mathcal H_4$ and $\mathcal H_7$ in $\mathcal H$. Their coefficients at $(\omega,ABC)$ are
$\tpar^{-2}+2$ and $\tpar^{-2}$. Indeed,
$\lambda_2E_1(ABC)=\lambda_1E_2(ABC)=\tpar^{-2}AC+\tpar^{-1} BB$; the respective
prefix covectors are $(1+\tpar^2)\langle AC|+\tpar\langle BB|$ and
$\langle AC|$. The factor $\tpar$ in the lower row and the input factor $\tpar$ from
$\Delta^{-1}$ turn each $\tpar^{-2}$ term into $1$.
All other $\mathcal H_j$ have degree at most zero there.
\end{proof}

\subsection{The coefficient}

The two constant rows now give the coefficient for every block count.

\begin{lemma}\label{loc:coefficient}
For every $m\ge0$,
\[
[\tpar][e_{a_m}]b_{c_m}=
\begin{cases}2^{m/2}&m\text{ even},\\0&m\text{ odd}.\end{cases}
\]
\end{lemma}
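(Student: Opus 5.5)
We compute the first-order coefficient of the scalar product $[e_{a_m}]b_{c_m}=p_a\mathsf A^m\rho_a$. By \autoref{prop:canonical}, $b_{c_m}=\beta_m$, and the restriction to the fixed external state gives this identity, with all entries in $R$. Expand each factor as constant part plus $\tpar$ times an element of $R$: $p_a=p_a[0]+\tpar p_a'$, $\mathsf A=\mathsf A[0]+\tpar\mathsf A'$, $\rho_a=\rho_a[0]+\tpar\rho_a'$. By \autoref{loc:selectedends}, $p_a[0]=\langle ABC|$ and $\rho_a[0]=0$, while $\rho_a\equiv\tpar\,ABC\pmod{\tpar^2}$.

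Consider the product modulo $\tpar^2$. Since $\rho_a$ already carries a factor $\tpar$, every other factor can be taken modulo $\tpar$. Hence
\[
[\tpar]\,p_a\mathsf A^m\rho_a=\langle ABC|\,\mathsf A[0]^m\,|ABC\rangle.
\]
We then need only the $(ABC,ABC)$ entry of $\mathsf A[0]^m$.

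By \autoref{loc:tworows}, the rows of $\mathsf A[0]$ indexed by $ABC$ and $*$ are supported in the columns $ABC,*$. Therefore the row vector $\langle ABC|\mathsf A[0]^k$ stays in the span of $\langle ABC|,\langle *|$ for every $k$, by induction on $k$. On this span, right multiplication by $\mathsf A[0]$ acts through $K=\begin{pmatrix}0&1\\2&0\end{pmatrix}$, so $\langle ABC|\mathsf A[0]^m|ABC\rangle=(K^m)_{11}$. Since $K^2=2I$, we get $K^{2r}=2^rI$ and $K^{2r+1}=2^rK$. Their $(1,1)$ entries are $2^r$ and $0$, which gives $2^{m/2}$ for even $m$ and $0$ for odd $m$.

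The only delicate step is the reduction modulo $\tpar^2$. It requires all entries of $p_a$, $\mathsf A$ and $\rho_a$ to lie in $R$, which comes from the canonicality section, so that no negative powers of $\tpar$ can combine with higher-order terms. It also requires the exact vanishing of $\rho_a[0]$, which \autoref{loc:selectedends} provides. Everything else is bookkeeping on the invariant two-dimensional row space.
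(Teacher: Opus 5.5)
Your proposal is correct and follows the paper's proof essentially step for step. Since $\rho_a[0]=0$ and $[\tpar]\rho_a=e_{ABC}$, every other factor contributes only its constant part. The invariance of the $ABC,*$ row span from \autoref{loc:tworows} then reduces the computation to $(1,0)K^m(1,0)^{\mathsf T}$, and $K^2=2I$ finishes it.
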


\begin{proof}
The three maps have entries in $R$, and \autoref{loc:selectedends}
gives $\rho_a[0]=0$ and $[\tpar]\rho_a=e_{ABC}$. Thus the factor $\rho_a$ supplies
the single power of $\tpar$; all other factors must contribute their
constant terms. Since $p_a[0]=\langle ABC|$, we obtain
\[
[\tpar](p_a \mathsf A^m \rho_a)=e_{ABC}^{\mathsf T}\mathsf A[0]^m e_{ABC}
=(1,0)K^m(1,0)^{\mathsf T}.
\]
By \autoref{loc:tworows}, multiplying a row supported on $ABC,*$
by $\mathsf A[0]$ keeps it supported on those two coordinates. This
justifies the reduction to $K$. Finally, $K^2=2I$, so the expression
is $2^{m/2}$ for even $m$ and zero for odd $m$.
The identification with $b_{c_m}$ is \autoref{prop:canonical}.
\end{proof}

\begin{example}\label{loc:smallreadout}
For $m=0$, \autoref{loc:selectedends} gives
\[
[e_{a_0}]b_{c_0}=p_a\rho_a=\tpar+2\tpar^3.
\]
Here the gap is $11$, and \autoref{eq:FKK} yields
$P^{J,+}_{x_0,y_0}(\qpar)=2\qpar^4+\qpar^5$.
Its critical coefficient, and hence $\mu(x_0,y_0)$, is $1$.
For $m=2$, the first-order-calculation already suffices:
\[
[\tpar][e_{a_2}]b_{c_2}=(1,0)K^2(1,0)^{\mathsf T}=2.
\]
This is the coefficient of $\qpar^{22}$ in the ordinary polynomial.
The local argument proves the value without computing that polynomial.
\end{example}

\subsection{Wrap up}\label{S:Wrap}

We are ready to prove the final results.

\begin{proof}[Proof of \autoref{thm:main}]
By \autoref{prop:canonical} and \autoref{loc:coefficient},
$[\tpar]([e_{a_{2r}}]b_{c_{2r}})=2^r$. The FKK identification in
\autoref{thm:FKK} therefore gives $\mu(x_{2r},y_{2r})=2^r$.
\end{proof}

\begin{corollary}\label{thm:rankbound}
For $n\ge12$, we have
\[
\max_{x,y\in S_n}\mu(x,y)\ge2^{\lfloor(n-12)/18\rfloor}.
\]
The same holds, mutatis mutandis, in finite and affine types ABCD.
\end{corollary}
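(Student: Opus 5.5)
The plan has two parts: the type $A$ bound follows from \autoref{thm:main} by a parabolic embedding, and the other types follow from the same embedding idea. For type $A$, given $n\ge12$, put $r=\lfloor(n-12)/18\rfloor$, so $12+18r\le n$. \autoref{thm:main} gives $x_{2r},y_{2r}\in S_{12+18r}$ with $\mu(x_{2r},y_{2r})=2^r$. We then regard $S_{12+18r}$ as the standard parabolic subgroup $W_J\subseteq S_n$ generated by $s_1,\ldots,s_{11+18r}$.

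The key input is that KL polynomials are compatible with parabolic subgroups: for $x,y\in W_J$, the polynomial $P_{x,y}$ computed in $W_J$ equals the one computed in $W$. Two facts give this.

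1. The Bruhat interval $[x,y]$ lies in $W_J$, by the subword property.
2. The KL basis element $C'_y$ of the Hecke algebra of $W_J$ is bar invariant in the larger Hecke algebra and has the correct triangularity there. Uniqueness of KL basis elements then identifies it with $C'_y$ of $W$.

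Lengths also agree, so the critical degree is unchanged and $\mu$ is preserved. Hence $\max_{x,y\in S_n}\mu(x,y)\ge2^r$, which proves the first claim.

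For the other types, we use the same mechanism. Every finite or affine Coxeter system of type $B$, $C$, $D$ or $\widetilde A,\widetilde B,\widetilde C,\widetilde D$ of rank $k$ contains a standard parabolic subgroup of type $A_{k'}$ with $k'\ge k-c$ for a small constant $c$: delete one or two end nodes of the Dynkin diagram. For $\widetilde A_k$, delete a single node to obtain $A_k$. The invariance of KL polynomials under passage to standard parabolic subgroups holds for arbitrary Coxeter groups, for the same reason as above. Applying the type $A$ bound inside this subgroup gives a lower bound $2^{\lfloor(k'+1-12)/18\rfloor}$, which is exponential in the rank up to a shift of the constants. This is what the paper means by ``mutatis mutandis''.

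The main point needing care is the parabolic invariance of $P_{x,y}$. I would cite the standard fact, for example from \cite{BjornerBrenti}, rather than reprove it. If a short argument is wanted, the plan is to check that the Hecke algebra of $W_J$ embeds as the span of $\{T_w\mid w\in W_J\}$, and that this embedding commutes with the bar involution, since $\overline{T_w}=T_{w^{-1}}^{-1}$ is computed inside it. Uniqueness of bar-invariant elements with the KL degree bounds then finishes the argument. The remaining steps are only index bookkeeping.
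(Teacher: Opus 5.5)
Your proposal is correct and follows the paper's proof. The paper also takes $r=\lfloor(n-12)/18\rfloor$, applies \autoref{thm:main} inside $S_{12+18r}\subseteq S_n$, and uses that KL polynomials and lengths are unchanged under a standard parabolic embedding. Your sketch of that invariance and your explicit type $A$ parabolic subgroups in the other finite and affine types only spell out what the paper leaves to ``mutatis mutandis''.
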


\begin{proof}
Take $r=\lfloor(n-12)/18\rfloor$ in \autoref{thm:main} and use
$S_{12+18r}\subseteq S_n$. KL polynomials are unchanged under this
standard parabolic embedding.
\end{proof}

\begingroup
\renewcommand{\remarkend}{}

\begin{remark}
The following are consequences of \autoref{thm:rankbound}
in conjunction with the cited references.
\begin{enumerate}[label=\emph{\upshape(\alph*)}]
\color{black}
\item
First extension dimensions between simple modules for algebraic
groups and finite groups of Lie type can grow at least exponentially
with the rank, in sufficiently large characteristic. This follows from
\cite{ParshallScott,SX}.

\item
Indecomposables of composition length two in three-block parabolic
category $\mathcal O$ of type A occur in projective families of
dimension growing at least exponentially with the rank. This follows from
\cite{BGS,KoMazorchukMrden}.

\item
Every complex projective variety occurs as a submodule variety
in a fixed such category, already for objects of Loewy length at
most two with composition factors drawn from two fixed simples. This follows from
\cite{KoMazorchukMrden,RingelGrassmannians}.

\item
Schubert normal slices with no nontrivial product decomposition
can have local intersection cohomology in the highest permitted
degree growing at least exponentially with the slice dimension. This follows from
\cite{deCataldoMigliorini,KLPositivity}.

\item
Tensoring an indecomposable Soergel bimodule with a single
generating bimodule can yield summand multiplicities growing
at least exponentially with the rank. This follows from \cite{EliasWilliamson,KL};
see also \cite{GrowthAffineHecke} for related tensor power growth.
\nobreak\hfill{$\Diamond$}
\end{enumerate}
\end{remark}
\endgroup

\section{Methods and AI assistance}\label{sec:methods}

We record how we found the family and how we used AI assistance.

\subsection{Human experience (personal account)}

One lesson we have learned over our careers is that if things get
bad, they get very bad (see, e.g., \cite{TorsionProliferation,LinkDetection} 
for examples from our works). Looking at Warrington's tables \cite{Warrington},
we found it hard to believe that $\mu$-coefficients could be bounded.

Our experience with webs pointed in the same direction. The move
from $\mathfrak{sl}_2$ to $\mathfrak{sl}_3$ is a serious jump: often, $\mathfrak{sl}_2$ is nice, while
$\mathfrak{sl}_3$ seems to contain all the complexity.
And, at least in type A, KL theory and webs are essentially the
same thing. The familiar hexagon-in-hexagons picture in \cite{KK} therefore gave us a natural starting
point. We expected unboundedness to come from pictures of this kind. (The contrast is: 
for $\mathfrak{sl}_2$, the web basis consists of Temperley--Lieb
diagrams without internal faces, and the corresponding
$\mu$-coefficients lie in $\{0,1\}$; even stronger, the corresponding parabolic 
KL polynomials themselves are monomials.)

\begin{remark}
Our intuition that $\mathfrak{sl}_3$ contains all the complexity
goes further. Polo's theorem \cite{Polo99} says that all polynomials
occur as KL polynomials for symmetric groups, subject to the obvious
restrictions.
We believe that an appropriate version of this statement already
holds for permutations whose RSK shapes have at most three rows.
We do not yet have enough experimental evidence to formulate
a precise conjecture.
\end{remark}

The difficult part was turning that intuition into a proof.
We knew from the start that the necessary calculations would be
very hard by hand, but they seemed amenable to a machine.
That is where AI (mid-2026 models) came in: to help us get the calculations and
details straight, e.g. it found the correct 
family to look at as ours was a bit off.

We now give a few more details.

\subsection{The starting point}\label{sec:origin}

We began by computing KL left cell representations with an
implementation of Nguyen's method \cite{Nguyen}. We generated the
cell $W$-graphs through $S_{19}$ and for selected larger partitions.
The larger weights led us to three-row tableaux and their webs.
We then looked for a boundary coefficient that we could compute
after correcting a web to its dual canonical vector.

\begin{remark}
Our preliminary computational results were obtained with a Rust program which can be found 
on our associated page \cite{GoLaTuVa-mu-code}.
\end{remark}

Our starting example was the following pair of shape $(9,7,7)$,
with computed $W$-graph weight $2$:
\begingroup\small\setlength{\arraycolsep}{3pt}
\[
S^\circ=\begin{array}{rrrrrrrrr}
1&2&5&8&11&14&17&20&23\\
3&6&9&12&15&18&21&&\\
4&7&10&13&16&19&22&&
\end{array},\qquad
T^\circ=\begin{array}{rrrrrrrrr}
1&2&3&5&8&9&11&18&20\\
4&6&7&14&15&16&17&&\\
10&12&13&19&21&22&23&&
\end{array}.
\]
\endgroup
We completed both tableaux to shape $(9,9,9)$ by adding $24,26$ to
row two and $25,27$ to row three, obtaining $S^+,T^+$, see \autoref{fig:cores1}. Deleting the
added entries in reverse order removes the same corner from both
tableaux at each step. By \cite[Proposition 3.3]{GossowYacobi}, this
preserves the symmetrized $\mu$-coefficient, that is, the $W$-graph
weight of the unordered pair.

\begin{figure}[!ht]
\centering
\includegraphics[width=.76\linewidth]{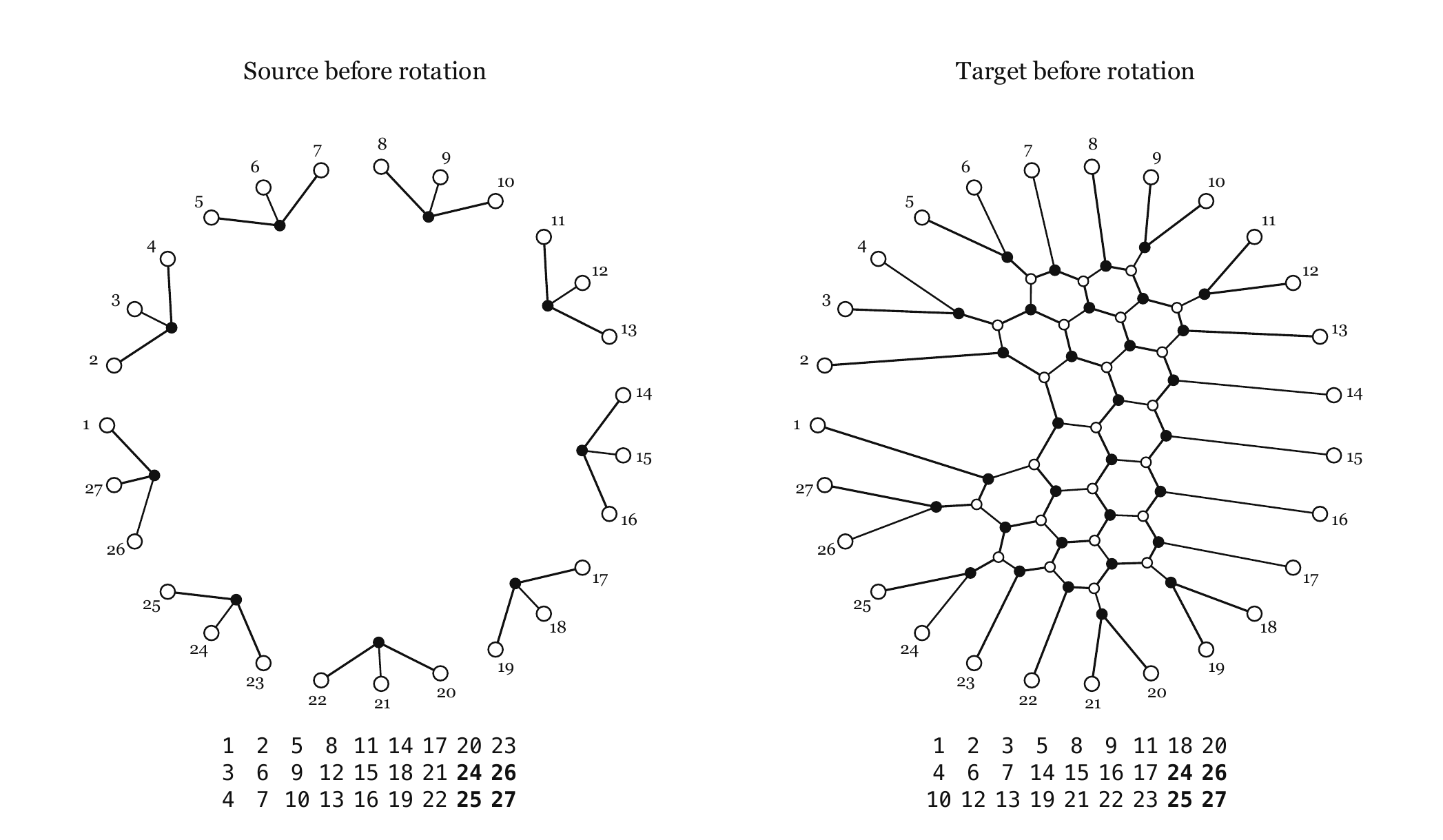}
\caption{The webs for $S^+,T^+$ and their
tableaux. Bold entries are the four added boxes.}
\label{fig:cores1}
\end{figure}

Rotate both webs by one boundary position, moving boundary vertex
$1$ to position $27$ and each vertex $i>1$ to position $i-1$.
With the boundary labels fixed, this gives the tableaux
$\operatorname{pr}(S^+),\operatorname{pr}(T^+)$, where
$\operatorname{pr}$ is promotion \cite[Theorem 2.5]{PPR}.
Simultaneous promotion preserves the symmetrized $\mu$-coefficient
for rectangular tableaux \cite[Proposition 3.2]{Rhoades}, so the
rotated pair still has $W$-graph weight $2$.

Motivated by the KK counterexample \cite{KK}, we
extended the hexagonal strip in the rotated target, with overlap at
boundary positions $13,22,31,\ldots$. This suggested the row words
\[
(ABC)^{6+3k},\qquad R_{\mathrm{seed}}\wordmiddle^k\wordend,\qquad
R_{\mathrm{seed}}=\word{AABABBAACACC},\quad k\ge0,
\]
with $\wordmiddle,\wordend$ as in \autoref{eq:words}. The case $k=1$ is the promoted pair just described, shown on the
left in \autoref{fig:cores2}. We computed its canonical correction, but did not find
a correction that worked for this whole family. We therefore looked
for a simpler start piece.

\begin{figure}[!ht]
\centering
\includegraphics[width=.70\linewidth]{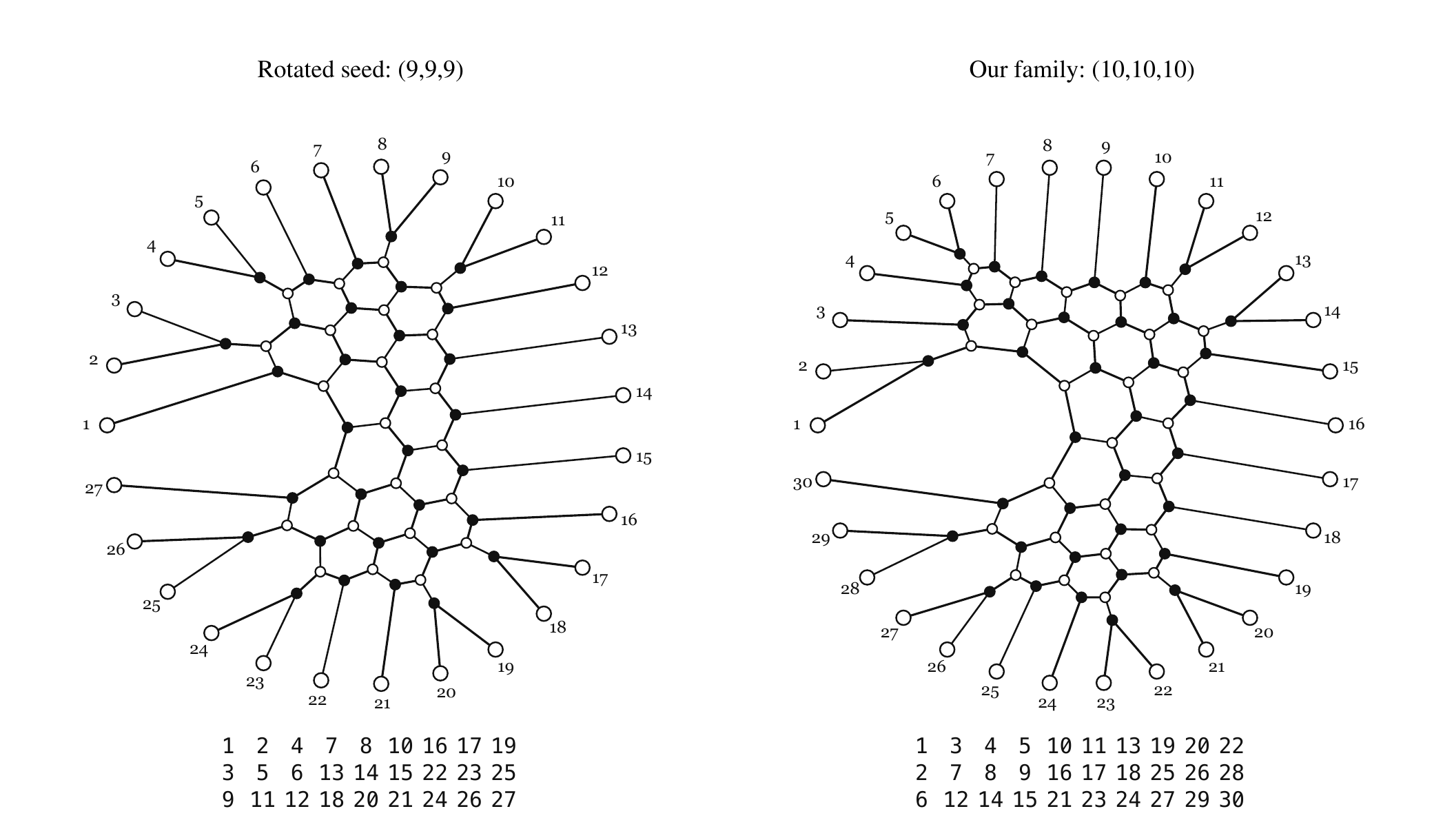}
\caption{Left: the rotated seed
$R_{\mathrm{seed}}\wordmiddle\wordend$. Right: the member $\wordstart\wordmiddle^2\wordend$.
Both source webs are products of consecutive tripods.}
\label{fig:cores2}
\end{figure}

\subsection{Choosing the start piece}

Now we explain how the right web in \autoref{fig:cores2} came about.
Robert's red-graph-reductions \cite{RobertFamily,RobertCharacterization}
suggested local correction terms. Such a reduction removes edges
bounding selected faces and reconnects the remaining half-edges by
specified pairings, preserving the ordered boundary
\cite[Definition 4.1]{RobertCharacterization}. For an exact paired
red graph in a non-elliptic web, the reduced web's projective module
is a direct summand of the original web module
\cite[Theorem 3.12 and Proposition 4.4]{RobertCharacterization}.

In the seed, the six hexagons surrounding an endpoint hexagon form
such a red graph. Reducing at that end and applying the square and
digon relations suggested the six-letter prefix $\wordstart$. Keeping $\wordmiddle$
and $\wordend$ gave the family in \autoref{eq:words}. The two families
are compared below; their indices $k$ and $m$ count blocks in their
respective constructions.
\begin{center}\small
\begin{tabular}{lccc}\toprule
Family&Source word&Target word&Block count\\\midrule
Seed&$(ABC)^{6+3k}$&$R_{\mathrm{seed}}\wordmiddle^k\wordend$&$k\ge0$\\
Theorem&$(ABC)^{4+3m}$&$\wordstart\wordmiddle^m\wordend$&$m\ge0$\\\bottomrule
\end{tabular}
\end{center}
See also \autoref{fig:cores2}.
Robert's theorem concerns module summands. The values of $\mu$ for
the modified start piece follow from \autoref{sec:canonicality} and
\autoref{sec:readout}. Its base member has $\mu=1$, whereas the
initial seed has $W$-graph weight $2$.

\subsection{From a seed calculation to a uniform argument}

For our family with two blocks, the complete canonical
correction has $94$ lower webs, as in \autoref{eq:94correction}.
At the selected boundary state, the raw critical coefficient is $18$
and the correction contributes $16$, leaving $2$. Grouping the
correction terms by their behavior across a three-strand cut splits
this calculation into $18-10-6=2$: $\startmap\middlemap\createmap\tau^2$ contributes
$10\tpar$, and $\startmap\createmap\removemap\finishmap-d\startmap\createmap\tau^5$ contributes $6\tpar$ to the amount
subtracted. These are the same terms grouped by their web-basis
coefficients as $18-3-13$ in \autoref{sec:basischange}.

This grouping suggested adding the scalar coordinate to the
three-strand interface in \autoref{sec:construction}. It allows one
fixed block to incorporate the correction for every $m$.

\subsection{AI assistance and verification}

We did the usual back-and-forth dance with AI (classic 2026).
We supplied the computational program, the selected $W$-graph
edges, their web interpretation and the first conjectural family.
We first used AI assistance for figures. Later models accessed
through Claude, ChatGPT and Codex helped with code, the modified
start piece, correction terms, the repeated block, Lean formalization and the proof draft.
AI-assisted symbolic calculations also produced the interface basis
change and the square-reduction tree.

We checked the local formulas by the calculations in the paper and
by independent tensor computations. The code is available in \cite{GoLaTuVa-mu-code}, and the scope of
the Lean formalization is described in \autoref{rem:verification}.
We are responsible for the mathematical statements and for verifying
the theoretical and computational inputs.

\appendix

\section{Details for \autoref{loc:tworows}}\label{loc:covectorappendix}

\addtocounter{subsection}{1}

Fix the nine outputs of each prefix $P_i$ or $P_j'$ to
$\omega=(ABC)^3$, and work downward through its growth string.
The tables give the resulting covector and the small map attached
to that prefix. Thus
$\langle\omega|D_i=(\langle\omega|P_i)h_i$, and similarly for $\mathcal H_j$.
Here a word $s$ denotes $\langle s|$.

The first table lists $D_1,\ldots,D_7$, whose sum is $D_0$.
The term $D_8=\tau^3\eta$ is treated separately in
\autoref{loc:lowerformula}. The second table supplies the terms of
$\mathcal H$. All covectors follow from the elementary tables in
\autoref{sec:webs}.
\begin{center}\small
\begin{minipage}[t]{.47\linewidth}\vspace{0pt}\centering
\begin{tabular}{@{}clc@{}}\toprule
Prefix&Covector&$h_i$\\\midrule
$P_1$&$\tpar^{-1} ABC+ACB+BAC$&$I$\\
$P_2$&$\tpar ABC+\tpar^2BAC$&$I$\\
$P_3$&$AC+\tpar BB$&$\lambda_2$\\
$P_4$&$\tpar AC+\tpar^2BB$&$\lambda_2E_1$\\
$P_5$&$\tpar^2AC$&$\lambda_2$\\
$P_6$&$\tpar AC$&$\lambda_2E_1$\\
$P_7$&$AC$&$\lambda_1$\\\bottomrule
\end{tabular}
\end{minipage}\hfill
\begin{minipage}[t]{.51\linewidth}\vspace{0pt}\centering
\begin{tabular}{@{}clc@{}}\toprule
Prefix&Covector&$h_j'$\\\midrule
$P_1'$&$ABC+\tpar ACB$&$I$\\
$P_2'$&$\tpar^2ABC$&$I$\\
$P_3'$&$\tpar AC$&$\lambda_2$\\
$P_4'$&$(1+\tpar^2)AC+\tpar BB$&$\lambda_2E_1$\\
$P_5'$&$\tpar^3AC$&$\lambda_2$\\
$P_6'$&$ABC+\tpar ACB+\tpar BAC$&$I$\\
$P_7'$&$AC$&$\lambda_1E_2$\\
$P_8'$&$ABC+\tpar BAC$&$I$\\
$P_9'$&$\tpar AC$&$\lambda_1$\\
$P_{10}'$&$\tpar AC+\tpar^2BB$&$\lambda_2$\\
$P_{11}'$&$\tpar^3AC$&$\lambda_1$\\\bottomrule
\end{tabular}
\end{minipage}
\end{center}

For example, the first four downward steps of the $P_1$ string give
\[
ABCABCABC\to ACABCABC\to ACACABC\to ACACAC\to ACAC.
\]
The coefficient along this path is $1$. Applying the remaining
$\mathsf S$ gives
$\tpar^{-1}\langle ABC|+\langle ACB|+\langle BAC|$ by
\autoref{loc:Srow}, the first covector in the table.
Composing the listed covectors on the right with their small maps
and summing gives the $D_0$ and $\mathcal H$ rows used in
\autoref{loc:tworows}.

\section{Details for \autoref{loc:openidentity}}\label{loc:atlasappendix}

\addtocounter{subsection}{1}

The atlas proves $\eta \middlemap=d\removemap+\mathcal H$ from
\autoref{loc:openidentity}. The diagrams $R_i$ form a reduction
tree: at each step we resolve the highlighted square or remove the
highlighted digon. The terminal diagrams are the eight maps $D_i$
and the eleven maps $\mathcal H_j$. The tables below locate each step and
each terminal identification. Each terminal is drawn once, with
its label in the reduction tree and its defining map written above it. The symbols $\mathsf R_i,\mathsf L_i$
in the growth strings denote local operations, whereas $R_i$ denotes
a diagram in this tree.

Boundary labels are fixed. Outputs $1,\ldots,9$ are followed around
the circle by inputs $12,11,10$. Hollow vertices are sources and
filled vertices are sinks. Each square resolution has coefficient
$1$; removing the sole digon contributes $d$.

\begin{center}\small
\begin{minipage}[t]{.52\linewidth}\centering
\begin{tabular}{@{}lc@{}}\toprule
Reduction&Figure\\\midrule
$R_{0}=R_{1}+R_{22}$&\ref{fig:atlas-r0}\\
$R_{1}=R_{2}+R_{11}$&\ref{fig:atlas-r1}\\
$R_{2}=R_{3}+R_{10}$&\ref{fig:atlas-r2}\\
$R_{3}=R_{4}+R_{7}$&\ref{fig:atlas-r3}\\
$R_{4}=R_{5}+R_{6}$&\ref{fig:atlas-r4}\\
$R_{7}=R_{8}+R_{9}$&\ref{fig:atlas-r7}\\
$R_{11}=R_{12}+R_{17}$&\ref{fig:atlas-r11}\\
$R_{12}=R_{13}+R_{16}$&\ref{fig:atlas-r12}\\
$R_{13}=R_{14}+R_{15}$&\ref{fig:atlas-r13}\\
$R_{17}=R_{18}+R_{21}$&\ref{fig:atlas-r17}\\
$R_{18}=R_{19}+R_{20}$&\ref{fig:atlas-r18}\\
$R_{22}=dR_{23}$&\ref{fig:atlas-r22}\\
$R_{23}=R_{24}+R_{31}$&\ref{fig:atlas-r23}\\
$R_{24}=R_{25}+R_{30}$&\ref{fig:atlas-r24}\\
$R_{25}=R_{26}+R_{29}$&\ref{fig:atlas-r25}\\
$R_{26}=R_{27}+R_{28}$&\ref{fig:atlas-r26}\\
$R_{31}=R_{32}+R_{35}$&\ref{fig:atlas-r31}\\
$R_{32}=R_{33}+R_{34}$&\ref{fig:atlas-r32}\\
$R_{35}=R_{36}+R_{37}$&\ref{fig:atlas-r35}\\
\bottomrule
\end{tabular}
\end{minipage}\hfill
\begin{minipage}[t]{.43\linewidth}\centering
\begin{tabular}{@{}lc@{}}\toprule
Terminal map&Figure\\\midrule
$R_{5}=\mathcal H_{3}$&\ref{fig:atlas-term6}\\
$R_{6}=\mathcal H_{5}$&\ref{fig:atlas-term7}\\
$R_{8}=\mathcal H_{2}$&\ref{fig:atlas-term5}\\
$R_{9}=\mathcal H_{1}$&\ref{fig:atlas-term5}\\
$R_{10}=\mathcal H_{4}$&\ref{fig:atlas-term6}\\
$R_{14}=\mathcal H_{11}$&\ref{fig:atlas-term10}\\
$R_{15}=\mathcal H_{9}$&\ref{fig:atlas-term9}\\
$R_{16}=\mathcal H_{10}$&\ref{fig:atlas-term9}\\
$R_{19}=\mathcal H_{7}$&\ref{fig:atlas-term8}\\
$R_{20}=\mathcal H_{8}$&\ref{fig:atlas-term8}\\
$R_{21}=\mathcal H_{6}$&\ref{fig:atlas-term7}\\
$R_{27}=D_{8}$&\ref{fig:atlas-term4}\\
$R_{28}=D_{7}$&\ref{fig:atlas-term4}\\
$R_{29}=D_{6}$&\ref{fig:atlas-term3}\\
$R_{30}=D_{4}$&\ref{fig:atlas-term2}\\
$R_{33}=D_{3}$&\ref{fig:atlas-term2}\\
$R_{34}=D_{5}$&\ref{fig:atlas-term3}\\
$R_{36}=D_{2}$&\ref{fig:atlas-term1}\\
$R_{37}=D_{1}$&\ref{fig:atlas-term1}\\
\bottomrule
\end{tabular}
\end{minipage}
\end{center}
\newcommand{\AtlasRelation}[1]{%
\ifnum#1=12
\(\vcenter{\hbox{\includegraphics[page=#1,trim=125bp 285bp 122bp 33bp,clip,width=.35\linewidth]{figures/local-atlas-reflow.pdf}}}
\quad=d\quad
\vcenter{\hbox{\includegraphics[page=#1,trim=125bp 29bp 122bp 290bp,clip,width=.35\linewidth]{figures/local-atlas-reflow.pdf}}}\)%
\else
\(\vcenter{\hbox{\includegraphics[page=#1,trim=125bp 298bp 122bp 22bp,clip,width=.30\linewidth]{figures/local-atlas-reflow.pdf}}}
\;=\;
\vcenter{\hbox{\includegraphics[page=#1,trim=0bp 44bp 249bp 277bp,clip,width=.30\linewidth]{figures/local-atlas-reflow.pdf}}}
\;+\;
\vcenter{\hbox{\includegraphics[page=#1,trim=248bp 44bp 0bp 277bp,clip,width=.30\linewidth]{figures/local-atlas-reflow.pdf}}}\)%
\fi}
\newcommand{\AtlasTerminal}[2]{%
\begin{minipage}[t]{.30\linewidth}\vspace{0pt}\centering
{\small$#1$}\par\smallskip
\includegraphics[page=#2,width=\linewidth,height=.20\textheight,keepaspectratio]{figures/terminal-maps.pdf}%
\end{minipage}}

\begingroup
\setlength{\abovecaptionskip}{3pt}
\setlength{\belowcaptionskip}{0pt}

\clearpage
\begin{figure}[p]
\centering
\AtlasRelation{1}
\caption{Square resolution at $R_{0}$.}
\label{fig:atlas-r0}
\par\medskip
\AtlasRelation{2}
\caption{Square resolution at $R_{1}$.}
\label{fig:atlas-r1}
\par\medskip
\AtlasRelation{3}
\caption{Square resolution at $R_{2}$.}
\label{fig:atlas-r2}
\par\medskip
\AtlasRelation{4}
\caption{Square resolution at $R_{3}$.}
\label{fig:atlas-r3}
\end{figure}

\clearpage
\begin{figure}[p]
\centering
\AtlasRelation{5}
\caption{Square resolution at $R_{4}$.}
\label{fig:atlas-r4}
\par\medskip
\AtlasRelation{6}
\caption{Square resolution at $R_{7}$.}
\label{fig:atlas-r7}
\par\medskip
\AtlasRelation{7}
\caption{Square resolution at $R_{11}$.}
\label{fig:atlas-r11}
\par\medskip
\AtlasRelation{8}
\caption{Square resolution at $R_{12}$.}
\label{fig:atlas-r12}
\end{figure}

\clearpage
\begin{figure}[p]
\centering
\AtlasRelation{9}
\caption{Square resolution at $R_{13}$.}
\label{fig:atlas-r13}
\par\medskip
\AtlasRelation{10}
\caption{Square resolution at $R_{17}$.}
\label{fig:atlas-r17}
\par\medskip
\AtlasRelation{11}
\caption{Square resolution at $R_{18}$.}
\label{fig:atlas-r18}
\par\medskip
\AtlasRelation{12}
\caption{Digon removal at $R_{22}$, with coefficient $d$.}
\label{fig:atlas-r22}
\end{figure}

\clearpage
\begin{figure}[p]
\centering
\AtlasRelation{13}
\caption{Square resolution at $R_{23}$.}
\label{fig:atlas-r23}
\par\medskip
\AtlasRelation{14}
\caption{Square resolution at $R_{24}$.}
\label{fig:atlas-r24}
\par\medskip
\AtlasRelation{15}
\caption{Square resolution at $R_{25}$.}
\label{fig:atlas-r25}
\par\medskip
\AtlasRelation{16}
\caption{Square resolution at $R_{26}$.}
\label{fig:atlas-r26}
\end{figure}

\clearpage
\begin{figure}[p]
\centering
\AtlasRelation{17}
\caption{Square resolution at $R_{31}$.}
\label{fig:atlas-r31}
\par\medskip
\AtlasRelation{18}
\caption{Square resolution at $R_{32}$.}
\label{fig:atlas-r32}
\par\medskip
\AtlasRelation{19}
\caption{Square resolution at $R_{35}$.}
\label{fig:atlas-r35}
\end{figure}

\clearpage
\begin{figure}[p]
\centering
\begin{tabular}{@{}c@{\hspace{.04\linewidth}}c@{\hspace{.04\linewidth}}c@{}}
\AtlasTerminal{R_{37}=D_1=P_1}{1} &
\AtlasTerminal{R_{36}=D_2=P_2}{2} &
\AtlasTerminal{R_{33}=D_3=P_3\lambda_2}{3}\\[7pt]
\AtlasTerminal{R_{30}=D_4=P_4\lambda_2E_1}{4} &
\AtlasTerminal{R_{34}=D_5=P_5\lambda_2}{5} &
\AtlasTerminal{R_{29}=D_6=P_6\lambda_2E_1}{6}\\[7pt]
\AtlasTerminal{R_{28}=D_7=P_7\lambda_1}{7} &
\AtlasTerminal{R_{27}=D_8=P_8\eta}{8} &

\end{tabular}
\begingroup
\caption{The terminal maps $D_1,\ldots,D_8$.
Each diagram shows the full composite $D_i=P_i h_i$.
Outputs are labeled $1,\ldots,9$; the three inputs are labeled
$12,11,10$ in their linear order.}
\label{fig:atlas-term1}
\label{fig:atlas-term2}
\label{fig:atlas-term3}
\label{fig:atlas-term4}\endgroup
\end{figure}

\clearpage
\begin{figure}[p]
\centering
\begin{tabular}{@{}c@{\hspace{.04\linewidth}}c@{\hspace{.04\linewidth}}c@{}}
\AtlasTerminal{R_9=\mathcal H_1=P'_1}{9} &
\AtlasTerminal{R_8=\mathcal H_2=P'_2}{10} &
\AtlasTerminal{R_5=\mathcal H_3=P'_3\lambda_2}{11}\\[7pt]
\AtlasTerminal{R_{10}=\mathcal H_4=P'_4\lambda_2E_1}{12} &
\AtlasTerminal{R_6=\mathcal H_5=P'_5\lambda_2}{13} &
\AtlasTerminal{R_{21}=\mathcal H_6=P'_6}{14}\\[7pt]
\AtlasTerminal{R_{19}=\mathcal H_7=P'_7\lambda_1E_2}{15} &
\AtlasTerminal{R_{20}=\mathcal H_8=P'_8}{16} &
\AtlasTerminal{R_{15}=\mathcal H_9=P'_9\lambda_1}{17}\\[7pt]
\AtlasTerminal{R_{16}=\mathcal H_{10}=P'_{10}\lambda_2}{18} &
\AtlasTerminal{R_{14}=\mathcal H_{11}=P'_{11}\lambda_1}{19} &

\end{tabular}
\caption{The terminal maps $\mathcal H_1,\ldots,\mathcal H_{11}$.}
\label{fig:atlas-term5}
\label{fig:atlas-term6}
\label{fig:atlas-term7}
\label{fig:atlas-term8}
\label{fig:atlas-term9}
\label{fig:atlas-term10}
\end{figure}
\clearpage
\endgroup

\end{document}